\newcommand*\widefbox[1]{\fbox{\hspace{1em}#1\hspace{1em}}}
\def\undersetbrace#1\to#2{\underbrace{#2}_{#1}}                                                          
\def\oversetbrace#1\to#2{\overbrace{#2}^{#1}}
\def\AMSunderset#1\to#2{\underset{#1}{#2}}
\def\AMSoverset#1\to#2{\overset{#1}{#2}}
\def\East#1#2{\overset{#1}{\longrightarrow}}
\newtheorem{proposition}[subsection]{Proposition}
\newtheorem*{proposition*}{Proposition}
\newtheorem{theorem}[subsection]{Theorem}
\newtheorem*{theorem*}{Theorem}
\newtheorem{lemma}[subsection]{Lemma}
\newtheorem*{lemma*}{Lemma}
\newtheorem*{corollary*}{Corollary}
\theoremstyle{definition}
\newtheorem*{definition*}{Definition}
\newtheorem*{remark*}{Remark}
\newtheorem{remark}[subsection]{Remark}
\newtheorem*{problem*}{Problem}
\newtheorem*{example*}{Example}
\newenvironment{demo}[1]{{\textit{#1.}}}{\par\smallskip}
\numberwithin{equation}{subsection}
\def\ign#1{}             %=ignore, invisible entry for the index only
\def\o{\,\circ\,}
\def\al{\alpha}
\def\be{\beta}
\def\de{\delta}
\def\la{\lambda}
\def\rh{\rho}
\def\si{\sigma}
\def\ta{\tau}
\def\ph{\varphi}
\def\ps{\psi}
\def\om{\omega}
\def\Si{\Sigma}
\def\x{\times}
\def\p{\partial}
\let\on=\operatorname
\def\AMSonly#1{}
\def\Id{\on{Id}}
\def\R{\mathbb R}
\def\Diff{{\on{Diff}}}
\newcommand{\nmb}[2]{\ifx!#1{\ref{nmb:#2}}%
\else\if.#1{\label{nmb:#2}}%
\else\if0#1{\label{nmb:#2}}%
\else{{#2}}%
\fi\fi\fi}
\newcommand{\sr}[1]%
{\ifmmode{}^\dagger\else${}^\dagger$\fi\ifvmode
\vbox to 0pt{\vss
 \hbox to 0pt{\hskip\hsize\hskip1em
 \vbox{\hsize3cm\raggedright\pretolerance10000
 \noindent #1\hfill}\hss}\vss}\else
 \vadjust{\vbox to0pt{\vss%
 \hbox to 0pt{\hskip\hsize\hskip1em%
 \vbox{\hsize3cm\raggedright\pretolerance10000%
 \noindent #1\hfill}\hss}\vss}}\fi%
}
\def\C{\mathbb{C}}
\def\I{\mathbb{I}}
\def\N{\mathbb{N}}
\def\R{\mathbb{R}}
\def\cA{\mathcal{A}}
\def\cB{\mathcal{B}}
\def\cD{\mathcal{D}}
\def\cF{\mathcal{F}}
\def\cL{\mathcal{L}}
\def\cS{\mathcal{S}}
\def\cV{\mathcal{V}}
\def\sB{\mathscr{B}}
\def\sF{\mathscr{F}}
\def\sK{\mathscr{K}}
\def\sR{\mathscr{R}}
\def\sS{\mathscr{S}}
\def\fS{\mathfrak{S}}
\def\RR{\mathbb R}
\def\subs{\subseteq}
\def\A{\;\forall}
\def\E{\;\exists}
\def\oo{\infty}
\def\d{\partial}
\def\ev{\on{ev}}
\def\rM{\{M\}}
\def\bM{(M)}
\def\cBb{\cB_b}
\def\cAb{\cA_b}
\def\BM{\cB^{[M]}}
\def\BMb{\cB^{[M]}_b}
\def\BrM{\cB^{\rM}}
\def\BbM{\cB^{\bM}}
\def\DM{\cD^{[M]}}
\def\DrM{\cD^{\rM}}
\def\DbM{\cD^{\bM}}
\def\CM{C^{[M]}}
\def\CrM{C^{\rM}}
\def\CbM{C^{\bM}}
\def\cSb{\cS_b}
\def\SLM{\tensor{\cS}{}_{[L]}^{[M]}}
\def\SrLM{\tensor{\cS}{}_{\{L\}}^{\rM}}
\def\SbLM{\tensor{\cS}{}_{(L)}^{\bM}}
\def\SbLMb{\tensor{\cS}{}_{(L), b}^{\bM}}
\def\SrLMb{\tensor{\cS}{}_{\{L\}, b}^{\rM}}
\def\SrML{\tensor{\cS}{}_{\{M\}}^{\{L\}}}
\def\SLMone{\tensor{\cS}{}_{[L],\ge1}^{[M]}}
\def\SbLMone{\tensor{\cS}{}_{(L),\ge1}^{(M)}}
\def\SrLMone{\tensor{\cS}{}_{\{L\},\ge1}^{\{M\}}}
\def\Hoo{H^{\infty}}
\def\Sp{W^{\infty,p}}
\def\SpM{W^{[M],p}}
\def\SpbM{W^{\bM,p}}
\def\SprM{W^{\rM,p}}
\def\set{\Si}
\def\Lin{\cL}
\def\ind{\varinjlim}
\def\proj{\varprojlim}
\def\DiffA{\Diff\cA(\R^n)}
\def\DiffB{\Diff\cB(\R^n)}
\def\DiffD{\Diff\cD(\R^n)}
\def\DiffS{\Diff\cS(\R^n)}
\def\DiffSp{\Diff\Sp(\R^n)}
\def\SN{\mathscr{S\!\!N}}
\title[The exponential law for spaces of test functions] 
{The exponential law for spaces of test functions and diffeomorphism groups} 
  \author{Andreas Kriegl, Peter W. Michor, and Armin Rainer}
  \address{Andreas Kriegl: Fakult\"at f\"ur Mathematik, Universit\"at Wien, 
  Oskar-Morgenstern-Platz~1, A-1090 Wien, Austria}
  \email{andreas.kriegl@univie.ac.at}
  \address{Peter W. Michor: Fakult\"at f\"ur Mathematik, Universit\"at Wien, 
  Oskar-Morgenstern-Platz~1, A-1090 Wien, Austria}
  \email{peter.michor@univie.ac.at}
  \address{Armin Rainer: Fakult\"at f\"ur Mathematik, Universit\"at Wien, 
  Oskar-Morgenstern-Platz~1, A-1090 Wien, Austria}
  \email{armin.rainer@univie.ac.at}
  \thanks{AK was supported by FWF-Project P~23028-N13; 
  AR by FWF-Project P~26735-N25}
  \subjclass[2010]{26E10, 46A17, 46E50, 58B10, 58B25, 58C25, 58D05, 58D15}
  \keywords{Convenient setting, exponential law, test functions, Sobolev functions, Denjoy--Carleman classes, 
  Gelfand--Shilov classes}
  \date{September 29, 2015}
\begin{document}
%-----------------------------------------------------------------------------------------------------------------------

\begin{abstract}
  We prove the exponential law $\cA(E \times F, G) \cong \cA(E,\cA(F,G))$ (bornological isomorphism) 
  for the following classes $\cA$ of test functions:
  $\cB$ (globally bounded derivatives), $\Sp$ (globally $p$-integrable derivatives), $\cS$ (Schwartz space), 
  $\cD$ (compact support), $\BM$ (globally Denjoy--Carleman),
  $\SpM$ (Sobolev--Denjoy--Carleman),  
  $\SLM$ (Gelfand--Shilov), and $\DM$ (Denjoy--Carleman with compact support). 
  Here $E, F, G$ are convenient vector spaces which are finite dimensional in the cases of $\cD$, $\Sp$, 
  $\DM$, and  $\SpM$. Moreover,
  $M=(M_k)$ is a weakly log-convex weight sequence of moderate growth. 
  As application we give a new simple proof of the fact that 
  the groups of diffeomorphisms
  $\Diff \cB$, $\Diff \Sp$, $\Diff \cS$, and $\Diff\cD$ are $C^\infty$ Lie groups, and that
  $\Diff \BrM$, $\Diff \SprM$, $\Diff \SrLM$, and $\Diff\DrM$, for non-quasianalytic $M$, are $\CrM$ Lie groups,
  where $\Diff\cA = \{\Id +f : f \in \cA(\R^n,\R^n), \inf_{x \in \R^n} \det(\I_n+ df(x))>0\}$.   
  We also discuss stability under composition. 
\end{abstract}

\maketitle 
%\tableofcontents

%-----------------------------------------------------------------------------------------------------------------------
\section{Introduction}
%-----------------------------------------------------------------------------------------------------------------------

In this paper we prove the bornological isomorphism  
\begin{equation} \label{eq:exp}
  \cA(E \times F, G) \cong \cA(E,\cA(F,G))
\end{equation}
for several classes $\cA$ of test functions. It is called \emph{exponential law}, since it takes the form 
$G^{E \times F} = (G^F)^E$ if one writes $\cA(X,Y) = Y^X$. 

The exponential law \eqref{eq:exp} is well-known in the categories of $C^\infty$, real analytic, and holomorphic functions; 
see \cite{KM97}.
In \cite{KMRc}, \cite{KMRq}, and \cite{KMRu} we established the exponential law \eqref{eq:exp} for local Denjoy--Carleman
classes $\CM$, provided that $M=(M_k)$ is weakly log-convex and 
has moderate growth. (The notation $\CM$ stands for the classes $\CrM$ of Roumieu type as well as for the classes $\CbM$ of Beurling type,
cf.\ Subsection~\ref{ssec:DC}.)
In all these cases the underlying spaces $E,F,G$ are so-called \emph{convenient} vector spaces, i.e.,
locally convex spaces that are Mackey complete.

We shall prove \eqref{eq:exp} for the following classes $\cA$ of test functions (see Sections \ref{sec:convenient} and \ref{sec:Wexp} for the 
precise definitions):
\begin{itemize}
   \item Smooth functions with globally bounded derivatives $\cB$ ($=\cD_{L^\infty}$ in \cite{Schwartz66})
   \item Smooth functions with $p$-integrable derivatives $\Sp$ ($=\cD_{L^p}$ in \cite{Schwartz66})
   \item Rapidly decreasing Schwartz functions $\cS$
	 \item Smooth functions with compact support $\cD$
   \item Global Denjoy--Carleman classes $\BM$
   \item Sobolev--Denjoy--Carleman classes $\SpM$
   \item Gelfand--Shilov classes $\SLM$
	 \item Denjoy--Carleman functions with compact support $\DM$   
 \end{itemize} 
For the sequence $L=(L_k)$ we just assume $L_k\ge 1$ for all $k$.

The underlying spaces are again convenient vector spaces, except for $\cD$, $\Sp$, $\DM$, and $\SpM$ 
when $E,F, G$ are assumed 
to be finite dimensional. The definition of the classes $\cB$, $\cS$, $\BM$, and $\SLM$ makes obvious sense between arbitrary Banach spaces.
By definition, a $C^\infty$-mapping $f : E \to F$ between general convenient vector spaces belongs to the class if the composite 
$\ell \o f \o i_B : E_B \to \R$ is in the class for each continuous linear functional $\ell : F \to \R$ and each closed 
absolutely convex bounded subset $B \subseteq E$, where $i_B : E_B \to E$ denotes the inclusions of the linear span $E_B$ of $B$ 
which equipped with the Minkowski functional is a Banach space. 

For finite dimensional parameter spaces we have the following continuous inclusions, where $1 \le p<q<\infty$;
for the inclusions marked by $*$ we assume that $M=(M_k)$ is derivation closed. 
    \[
      \xymatrix{
        \cD~ \ar@{{ >}->}[r] & \cS~ \ar@{{ >}->}[r] & \Sp~ \ar@{{ >}->}[r] & W^{\infty,q}~ \ar@{{ >}->}[r] 
        & \cB~ \ar@{{ >}->}[r] & C^\infty \\
        \DrM~ \ar@{{ >}->}[r] \ar@{{ >}->}[u] & \SrLM~ \ar@{{ >}->}[r] \ar@{{ >}->}[u] & \SprM~ \ar@{{ >}->}[r]^{*} \ar@{{ >}->}[u] 
        & W^{\rM,q}~ \ar@{{ >}->}[r]^{*} \ar@{{ >}->}[u] 
        & \BrM~ \ar@{{ >}->}[u] \ar@{{ >}->}[r] & \CrM~ \ar@{{ >}->}[u]\\
        \DbM~ \ar@{{ >}->}[r] \ar@{{ >}->}[u] &\SbLM~ \ar@{{ >}->}[r] \ar@{{ >}->}[u] & \SpbM~ \ar@{{ >}->}[r]^{*} \ar@{{ >}->}[u] 
        & W^{\bM,q}~ \ar@{{ >}->}[r]^{*} \ar@{{ >}->}[u] 
        & \BbM~ \ar@{{ >}->}[u] \ar@{{ >}->}[r] & \CbM~ \ar@{{ >}->}[u]
      }
    \]

We are grateful to a referee who pointed out that
\begin{equation} \label{eq:intro1}
 \cD(\R^\ell \times \R^m,\R^n) \cong \cD(\R^\ell,\cD(\R^m,\R^n)) 
\end{equation}
does not hold  topologically, contrary to, e.g., \cite[p.~415]{Treves67}. Namely, the right hand side is 
not barrelled since it contains a complemented subspace isomorphic to the projective tensor 
product of the space of rapidly decreasing sequences with the space of finite sequences which is 
complete but not barrelled, see \cite[Chap II, \S 4, no~1, proposition 14]{Grothendieck55}. But 
\eqref{eq:intro1} holds bornologically, i.e., in the category of convenient vector spaces, see 
Theorem \ref{thm:Dexp}.

Every continuous function with compact support in an infinite dimensional Banach space is identically zero.  
So it makes little sense to go beyond finite dimensional vector spaces in \eqref{eq:intro1}.
Note that, as $\DM \subseteq \SLM$, $\SLM$ is certainly non-trivial if $M=(M_k)$ is 
non-quasianalytic.

The paper is organized as follows.
Due to fundamental differences in the proofs for the classes defined by means of $L^\infty$-estimates on one hand 
and $L^p$-estimates on the other hand, we treat these cases separately.  
After collecting preliminaries on weight sequences in 
Section \ref{sec:prelim}, we devote the Sections \ref{sec:convenient} and \ref{sec:workingup}
to working up to the $\cB$, $\cS$, $\BM$, and $\SLM$
exponential law which is finally proved in Section \ref{sec:BSexp}: 
We introduce the respective classes of mappings between Banach spaces and extend 
them to convenient vector spaces in Section \ref{sec:convenient}. In Section \ref{sec:workingup}
we provide projective descriptions in the Roumieu case, show that it suffices 
to test with continuous linear functionals that detect bounded sets, and prove a uniform boundedness principle. 
The $\cD$ and $\cD^{[M]}$ exponential law is treated at the end of Section \ref{sec:BSexp}.

The $\Sp$ and $\SpM$ exponential law is treated in Section \ref{sec:Wexp}.

In Section \ref{sec:failure} we show that the $\BrM$, $\SprM$, $\SrLM$, and $\DrM$ exponential law 
fails if $M$ or $L$ have non-moderate growth. 

None of the classes $\cA$ of test functions form a category since there are no identities. 
In Section \ref{sec:comp} we prove that $\cB$ and $\BM$ are closed under composition, in contrast to all other cases. 
In fact the ``$0$th derivative'' of the composite function may not have the required decay properties at infinity.  
We show that stability under composition holds if one requests the defining properties only from the first derivative onwards.

In the final Section \ref{sec:diff} we apply the results of this paper to give a new simple proof,
in particular cases, of the fact that 
\[
  \Diff\cA=\DiffA := \big\{F=\Id+f: f\in \cA(\R^n,\R^n), \inf_{x \in \R^n}\det(\mathbb I_n + df(x)) >0\big\}
\]
is a Lie group.
It was shown in \cite{MichorMumford13} (and $\Diff\cD$ was already treated in \cite{Michor80II} 
and \cite{Michor80}) that 
the groups of diffeomorphisms ($1 \le p<q<\infty$)
\[
      \xymatrix{
        \Diff\cD \ar@{{ >}->}[r] & \Diff\cS \ar@{{ >}->}[r] & \Diff W^{\infty,p} \ar@{{ >}->}[r] & \Diff W^{\infty,q} \ar@{{ >}->}[r] & \Diff\cB 
      }
\]
%\[
%      \xymatrix@C=.001pc{
%        \DiffD \ar@{{ >}->}[dr] &  & \DiffSp \ar@{{ >}->}[dr] &  & \DiffB \\
%         & \DiffS \ar@{{ >}->}[ur] & & \Diff W^{\infty,q}(\R^n) \ar@{{ >}->}[ur] &
%      }
%  \]
are $C^\infty$-regular Lie groups. The arrows describe $C^\infty$ injective group homomorphisms.
Each group is a normal subgroup of the groups on its right.
In \cite{KrieglMichorRainer14a} we proved that, 
provided that $M=(M_k)$ is log-convex, has moderate growth, and in the Beurling $\CbM \supseteq C^\om$, 
and that $L=(L_k)$ satisfies $L_k \ge 1$ for all $k$, 
the groups of $\CM$-diffeomorphisms
\[
      \xymatrix{
        \Diff\DM \ar@{{ >}->}[r] & \Diff\SLM \ar@{{ >}->}[r] & \Diff\SpM \ar@{{ >}->}[r] & \Diff W^{[M],q} \ar@{{ >}->}[r] & \Diff\BM
      }
\]
%  \[
%      \xymatrix@C=.001pc{
%        \!\!\DiffDM\!\! \ar@{{ >}->}[dr] &  & \!\!\DiffSpM\!\! \ar@{{ >}->}[dr] &  & \!\!\DiffBM\!\! \\
%         & \!\!\DiffSLM\!\! \ar@{{ >}->}[ur] & & \!\!\Diff W^{[M],q}(\R^n)\!\! \ar@{{ >}->}[ur] &
%      }
%  \]  
  are $\CM$-regular Lie groups. The arrows describe $\CM$ injective group homomorphisms.
  Each group is a normal subgroup in the groups on its right.      
This was done by showing (via a careful application of Fa\`a di Bruno's formula) that $C^\infty$-curves and $\CM$-Banach plots, 
respectively, are preserved by the group operations, that is composition and inversion. 

In Section \ref{sec:diff} we use the exponential laws established in this paper to conclude in a simple way that 
$\Diff\cD$, $\Diff\cS$, $\Diff\Sp$, $\Diff\cB$ are $C^\infty$ Lie groups and 
that $\Diff\DrM$, $\Diff \SrLM$, $\Diff \SprM$, $\Diff \BrM$ are $\CrM$ Lie groups provided that 
$M=(M_k)$ is \emph{non-quasianalytic}. 
In these cases we know that it suffices to show that the group operations take $\cD$ or $\DrM$-curves to $C^\infty$ or 
$\CrM$-curves, respectively; see \cite{KMRc}.
By the exponential law \eqref{eq:exp} we may consider the $\cD$ or $\DrM$-curves in $\cA(\R^n,\R^n)$ simply as 
elements in $\cA(\R \times \R^n,\R^n)$, and thus the assertions reduce to results on composition and inversion of 
mappings in several real variables.

\paragraph{\bf Notation}
We use $\N = \N_{>0} \cup \{0\}$.
For each multi-index $\al=(\al_1,\ldots,\al_n) \in \N^n$, we write
$\al!=\al_1! \cdots \al_n!$, $|\al|= \al_1 +\cdots+ \al_n$, and 
$f^{(\al)}(x) = \p^\al f(x)=\frac{\p^{|\al|}}{\p x_1^{\al_1} \cdots \p x_n^{\al_n}} f(x)$. 
By $f^{(k)}(x) = d^k f(x)$ we mean the $k$-th order Fr\'echet derivative of $f$ at $x$,
and $d_v^k f(x) = \p_t^k|_{t=0} f(x+tv)$ denotes the $k$ times iterated directional derivative in direction $v$.

For a mapping $f : X \x Y \to Z$ we set $f^\vee: X \to Z^Y$, $f^\vee(x)(y):=f(x,y)$, and conversely,
for a mapping $g : X \to Z^Y$ we set $g^\wedge : X \x Y \to Z$, $g^\wedge(x,y) := g(x)(y)$.

For locally convex spaces $E$ let $\sB(E)$ denote the set of all
closed absolutely convex bounded subsets $B \subseteq E$.
Let $\SN(E)$ denote the collection of all continuous seminorms on $E$.
For $B \in \sB(E)$ we denote by $E_B$ the 
linear span of $B$ equipped with the Minkowski functional $\|x\|_B = \inf \{\la>0 : x \in \la B\}$. 
If $E$ is a convenient vector space, then $E_B$ is a 
Banach space. 
For $U \subseteq E$ we set $U_B := i_B^{-1}(U)$, where $i_B : E_B \to E$ is the inclusion 
of $E_B$ in $E$.
The collection of compact subsets $K \subseteq U$ is denoted by $\sK(U)$.

We denote by $E^*$ (resp.\ $E'$) the dual space of continuous (resp.\ bounded) linear functionals. $\Lin(E_1,\ldots,E_k;F)$
is the space of $k$-linear bounded mappings $E_1 \x \cdots \x E_k \to F$; if $E_i =E$ for all $i$, we also write $\Lin^k(E;F)$.  
If $E$ and $F$ are Banach spaces, then $\|~\|_{\Lin^k(E;F)}$ denotes the operator norm on $\Lin^k(E;F)$.
%We write $oE$ for the open unit ball in a Banach space $E$. 

%The symbol $\Box$ stands for a quantifier $\forall$ or $\exists$. It is always tied to some space of $[M]$-ultradifferentiable 
%functions and should be interpreted as $\Box:=\forall$ if $[M] = \bM$ and $\Box:=\exists$ if $[M] = \rM$.  
We subsume both the Beurling case $\bM$ and the Roumieu case $\rM$ under the symbol $[M]$.
Statements that involve more than one $[M]$ symbol must not be interpreted by mixing $\bM$ and $\rM$.

%-----------------------------------------------------------------------------------------------------------------------
\section{Preliminaries} \label{sec:prelim}
%-----------------------------------------------------------------------------------------------------------------------

%-----------------------------------------------------------------------------------------------------------------------
\subsection{Weight sequences} \label{ssec:ws}
%-----------------------------------------------------------------------------------------------------------------------

A \textbf{weight sequence} is a sequence
$M=(M_k)=(M_k)_{k=0,1,\ldots}$ of positive real numbers satisfying $M_0 = 1 \le M_1$. 

We say that $M=(M_k)$ is \textbf{log-convex} if $k\mapsto\log M_k$ is convex, or equivalently,  
\begin{equation}
  M_k^2 \le M_{k-1} M_{k+1}, \quad k \in \N.
\end{equation}
If $M=(M_k)$ is log-convex, then $M=(M_k)$ has the following properties:
\begin{align}
  &M=(M_k) \text{ is \textbf{weakly log-convex}, i.e., }  k!\, M_k \text{ is log-convex}, \\
  &(M_k)^{1/k} \text{ is non-decreasing}, \\
  &M_j  M_k\le M_{j+k}, \quad\text{for } j,k\in \N, \label{eq:alg}\\
  &M_1^j \, M_k\ge M_j\, M_{\al_1} \cdots M_{\al_j}, \quad \text{for }\al_i\in \N_{>0}, ~\al_1+\dots+\al_j = k,  \label{eq:FdB} 
  %\\
  %&M_1^k \, M_n\ge M_k\, M_{1}^{k_1} \cdots M_{n}^{k_n}, \quad \text{for }k_i\in \N, ~\sum_{i=1}^n i k_i = n, ~\sum_{i=1}^n k_i = k; \label{eq:Childress}
\end{align}
cf.\ \cite{KMRu} or \cite{RainerSchindl12}. %, and \cite[Prop~4.4]{BM04} for \eqref{eq:Childress}. 

We say that $M=(M_k)$ is
\textbf{derivation closed} if 
\begin{equation} \label{eq:dc}
\sup_{k \in \N_{>0}} \Big(\frac{M_{k+1}}{M_k}\Big)^{\frac{1}{k}} < \infty,
\end{equation}
and that $M=(M_k)$ has \textbf{moderate growth} if 
\begin{equation} \label{eq:mg0}
\sup_{j,k \in \N_{>0}} \Big(\frac{M_{j+k}}{M_j \, M_k}\Big)^{\frac{1}{j+k}} <
\infty.
\end{equation}
Obviously, \eqref{eq:mg0} implies \eqref{eq:dc}. 
If $M=(M_k)$ is
derivation closed, then also $k!\, M_k$ is derivation closed and we have
\begin{equation} \label{eq:dc1}
  (k+j)!\, M_{k+j} \le C^{j(k+j)}\, k!\, M_k, \quad\text{for } k,j \in \N 
\end{equation}
for some constant $C\ge 1$.

A weakly log-convex weight sequence $M=(M_k)$ is called \textbf{quasianalytic} if 
\begin{equation}
  \sum_{k=1}^\infty (k!\, M_k)^{-1/k} = \infty,
\end{equation}
and \textbf{non-quasianalytic} otherwise. 
%It is  called 
%\textbf{strongly non-quasianalytic} if
%\begin{equation} \label{eq:snqa}
%  \sup_k \frac{M_{k+1}}{M_k} \sum_{\ell\ge k} \frac{M_{\ell}}{(\ell+1) M_{\ell+1}} < \infty.
%\end{equation}
We refer to \cite{KMRc},\cite{KMRq}, \cite{KMRu}, or \cite{RainerSchindl12} 
for a detailed exposition of the connection between these conditions on $M=(M_k)$
and the properties of $\CM$.

%-----------------------------------------------------------------------------------------------------------------------
\subsection{Local Denjoy--Carleman classes} \label{ssec:DC}
%-----------------------------------------------------------------------------------------------------------------------

Let $E,F$ be Banach spaces, $U \subseteq E$ open, and let $M=(M_k)$ be a weight sequence.
We define the local Denjoy--Carleman classes 
\begin{align*}
  C^{\bM}(U,F) &:= \Big\{f \in C^\infty(U,F) : \forall K \in \sK(U) ~\forall \rh>0: 
  \|f\|_{K,\rh}^M <\infty\Big\}, \\
  C^{\rM}(U,F) &:= \Big\{f \in C^\infty(U,F) : \forall K \in \sK(U) ~\exists \rh>0: 
  \|f\|_{K,\rh}^M <\infty\Big\},
\end{align*}
where
\[
  \|f\|_{K,\rh}^M := \sup_{\substack{k \in \N\\ x \in K}} \frac{\|f^{(k)}(x)\|_{\Lin^k(E;F)}}{\rh^k k! M_k}.
\]
See \cite[4.2]{KMRu} for the locally convex structure of these spaces.
The elements of $C^{\bM}(U,F)$ are said to be of Beurling type; those of $C^{\rM}(U,F)$ of Roumieu type. 

The classes $C^{[M]}$ can be extended to 
convenient vector spaces, and they then form cartesian closed categories if the weight sequence $M=(M_k)$ 
is log-convex and has moderate growth.
This has been developed in \cite{KMRc}, \cite{KMRq}, and \cite{KMRu}.

%-----------------------------------------------------------------------------------------------------------------------
\section{Classes of test functions between convenient vector spaces} \label{sec:convenient}
%-----------------------------------------------------------------------------------------------------------------------

%-----------------------------------------------------------------------------------------------------------------------
\subsection{Between Banach spaces} \label{ssec:Banach}
%-----------------------------------------------------------------------------------------------------------------------

Let 
$E, F$ be Banach spaces, $U \subseteq E$ open.

%-----------------------------------------------------------------------------------------------------------------------
\paragraph{\bf Smooth functions with globally bounded derivatives} 
%-----------------------------------------------------------------------------------------------------------------------

Consider
\[
  \cB(U,F) := \Big\{f \in C^\infty(U,F) : \|f\|^{(k)}_{U} < \infty \text{ for all } k\in \N\Big\},
\]
where
\[
   \|f\|^{(k)}_{U} := \sup_{x \in U} \|f^{(k)}(x)\|_{\Lin^k(E;F)}, 
\]
with its natural Fr\'echet topology.

%-----------------------------------------------------------------------------------------------------------------------
\paragraph{\bf Rapidly decreasing Schwartz functions}
%-----------------------------------------------------------------------------------------------------------------------

Consider  
\[
  \cS(E,F) := \Big\{f \in C^\infty(E,F) : \|f\|^{(k,\ell)}_{E} < \infty \text{ for all } k,\ell \in \N\Big\},
\]
where 
\[
  \|f\|^{(k,\ell)}_{E} := \sup_{x \in E}  (1+\|x\|)^k \|f^{(\ell)}(x)\|_{\Lin^\ell(E;F)},
\]
with its natural Fr\'echet topology.

%-----------------------------------------------------------------------------------------------------------------------
\paragraph{\bf Global Denjoy--Carleman classes}
%-----------------------------------------------------------------------------------------------------------------------

Let $M=(M_k)$ be a weight sequence, and let $\rh>0$.
Consider the Banach space 
\[
  \cB^M_\rh(U,F) := \{f \in C^\infty(U,F) : \|f\|_{U,\rh}^M < \infty\},
\]
where
\[
  \|f\|_{U,\rh}^M := \sup_{\substack{k \in \N\\ x \in U}} \frac{\|f^{(k)}(x)\|_{\Lin^k(E;F)}}{\rh^k k! M_k}.
\]
We define the Fr\'echet space 
\[
  \cB^{\bM}(U,F) := \varprojlim_{n \in \N} \cB^M_{\frac{1}{n}}(U,F) 
\]
and  
\[
  \cB^{\rM}(U,F) := \varinjlim_{n \in \N} \cB^M_{n}(U,F)
\]
which is a compactly regular (LB)-space and thus {($c^\oo$-)}complete, webbed, and (ultra-)bornological; 
see \cite[Lemma 4.9]{KrieglMichorRainer14a}.

%-----------------------------------------------------------------------------------------------------------------------
\paragraph{\bf Gelfand--Shilov classes}
%-----------------------------------------------------------------------------------------------------------------------

Let $L=(L_k)$, $M=(M_k)$ be weight sequences, and let $\rh>0$.
Consider the Banach space 
\[
  \cS_{L,\rh}^M(E,F) := \{f \in C^\infty(E,F) : \|f\|_{E,\rh}^{L,M} < \infty\}.
\]
with the norm
\[
  \|f\|_{E,\rh}^{L,M} := \sup_{\substack{k,\ell \in \N\\ x \in E}}  \frac{(1+\|x\|)^k \|f^{(\ell)}(x)\|_{\Lin^\ell(E,F)}}{\rh^{k+\ell}\, 
  k! \ell!\, L_k M_\ell}.
\]
We define the Fr\'echet space 
\[
  \SbLM(E,F) := \varprojlim_{n \in \N} \cS^M_{L,\frac{1}{n}}(E,F) 
\]
and  
\[
  \SrLM(E,F) := \varinjlim_{n \in \N} \cS^M_{L,n}(E,F)
\]
which is a compactly regular (LB)-space and thus {($c^\oo$-)}complete, webbed, and (ultra-)bornological; 
see \cite[Lemma 4.9]{KrieglMichorRainer14a}.

%-----------------------------------------------------------------------------------------------------------------------
\subsection{Between convenient vector spaces}  \label{ssec:convenientstructure}
%-----------------------------------------------------------------------------------------------------------------------

For convenient vector spaces $E, F$, $c^\infty$-open $U \subseteq E$, and weight sequences $L=(L_k)$, $M=(M_k)$ we define:
\begin{empheq}[box=\widefbox]{align*}
\;\cB(U,F) &:= 
  \Bigl\{f\in C^\oo(U,F): 
  \forall \ell\; \forall B: \ell \o f \o i_B \in \cB(U_B,\R)\Bigr\} \\
  \cS(E,F) &:= 
  \Bigl\{f\in C^\oo(E,F):
  \forall \ell\; \forall B: \ell \o f \o i_B \in \cS(E_B,\R)\Bigr\} \\
  \;\BM(U,F) &:= 
  \Bigl\{f\in C^\oo(U,F):
  \forall \ell\; \forall B: \ell \o f \o i_B \in \BM(U_B,\R)\Bigr\} \\
  \;\SLM(E,F) &:= 
  \Bigl\{f\in C^\oo(E,F):
  \forall \ell\; \forall B: \ell \o f \o i_B \in \SLM(E_B,\R)\Bigr\}
\end{empheq}
where $\ell \in F^*$, $B\in \sB(E)$, and $U_B = U \cap E_B$. 
It will follow from Lemma~\ref{AvsAb} 
that for Banach spaces $E$, $F$ this definition coincides with the one given in Subsection~\ref{ssec:Banach}.

For $\cA \in \{\cB, \cS, \BM, \SLM\}$ (if $\cA\in \{\cS,\SLM\}$ we set $U=E$),
we equip $\cA(U,F)$ with the
initial locally convex structure induced by all linear mappings 
\begin{align*}
\cA(U,F) &\East{\cA(i_B,\ell)}{}  \cA(U_B, \mathbb R), \quad f \mapsto \ell\o f\o i_B.
\end{align*}
Then $\cA(U,F)$ is a
convenient vector space as $c^\oo$-closed subspace in the product
$\prod_{\ell,B} \cA(U_B, \mathbb R)$,
since smoothness can be tested by composing with the inclusions $E_B\to E$ and with the
$\ell\in F^*$, see \cite[2.14.4 and 1.8]{KM97}.
This shows at the same time, that in the definition of $\cA(U,F)$ it is not necessary to 
require that $f$ is $C^\infty$.

%-----------------------------------------------------------------------------------------------------------------------
\subsection{Related classes defined by boundedness conditions}
%-----------------------------------------------------------------------------------------------------------------------

Consider the collections
\begin{align*}
  \fS_{\cB} &:= \{\set^{(k)}_{U_B} : B \in \sB(E), k \in \N \},\\ 
  \fS_{\cS} &:= \{\set^{(k,\ell)}_{E_B} : B \in \sB(E), k,\ell \in \N \}, \\
  \fS_{\BbM} &:= \{\set^M_{U_B,\rh} : B \in \sB(E), \rh >0 \}, \\
  \fS_{\BrM,B} &:= \{\set^M_{U_B,\rh} : \rh >0 \}, \quad B \in \sB(E), \\
  \fS_{\SbLM} &:= \{\set^{L,M}_{E_B,\rh} : B \in \sB(E), \rh >0\}, \\
  \fS_{\SrLM,B} &:= \{\set^{L,M}_{E_B,\rh} : B \in \sB(E)\}, \quad B \in \sB(E),
\end{align*}
of set-valued mappings
\begin{align*}
    \set^{(k)}_{U_B}(f) &:= \Big\{f^{(k)}(x)(v_1,\dots,v_k) : x\in U_B,\|v_i\|_B\leq 1\Big\},\\
    \set^{(k,\ell)}_{E_B}(f) &:= \Big\{(1+\|x\|_B)^k f^{(\ell)}(x)(v_1,\dots,v_\ell) : x\in E_B,\|v_i\|_B\leq 1\Big\},\\
    \set^M_{U_B,\rh}(f) &:= \Big\{\frac{f^{(k)}(x)(v_1,\dots,v_k)}{\rh^k\, k!\, M_k}:k\in \mathbb N,x\in U_B,\|v_i\|_E\leq 1\Big\},\\
    \set^{L,M}_{E_B,\rh}(f) &:= \Big\{\frac{(1+\|x\|_B)^k f^{(\ell)}(x)(v_1,\dots,v_\ell)}{\rh^{k+\ell}\,k! \ell!\, L_k M_\ell}:k,\ell \in \mathbb N,x\in E_B,\|v_i\|_B\leq 1\Big\}.
\end{align*}
For $C^\infty$-mappings $f$ we define
\begin{align*} 
f \in \cBb(U,F) ~ &:\Longleftrightarrow ~
  ~\forall \set \in \fS_{\cB} : 
  \set(f) \text{ is bounded in } F, \\
f \in \cSb(E,F) ~ &:\Longleftrightarrow ~ 
  ~\forall \set \in \fS_{\cS} : 
  \set(f) \text{ is bounded in } F, \\
f \in \BbM_b(U,F) ~ &:\Longleftrightarrow ~
  ~\forall \set \in \fS_{\BbM} : 
  \set(f) \text{ is bounded in } F, \\
f \in \BrM_b(U,F) ~ &:\Longleftrightarrow ~
  ~\forall B \in \sB(E) ~\exists \set \in \fS_{\BrM,B} : 
  \set(f) \text{ is bounded in } F, \\  
f \in \SbLMb(E,F) ~ &:\Longleftrightarrow ~ 
  ~\forall \set \in \fS_{\SbLM} : 
  \set(f) \text{ is bounded in } F, \\  
f \in \SrLMb(E,F) ~ &:\Longleftrightarrow ~
  ~\forall B \in \sB(E) ~\exists \set \in \fS_{\SrLM,B} : 
  \set(f) \text{ is bounded in } F.  
\end{align*}

Moreover, we call a subset $\mathcal F$ of such functions $f$
bounded in the corresponding space, when the conditions above
are satisfied for $\bigcup_{f\in\mathcal F}\set(f)$ instead of $\set(f)$.

\begin{lemma} \label{AvsAb}
  We always have 
  \begin{align}  \label{eq:AvsAb1}
  \begin{split}
    \cBb(U,F)&=\cB(U,F),\\
    \cSb(E,F)&=\cS(E,F),\\
    \BbM_b(U,F)&=\BbM(U,F),\\
    \SbLMb(E,F)&=\SbLM(E,F).
  \end{split}
  \end{align}
  We have  
  \begin{align} \label{eq:AvsAb2}
  \begin{split}
    \BrM_b(U,F)&=\BrM(U,F),\\
    \SrLMb(E,F)&=\SrLM(E,F),
  \end{split}  
  \end{align}
  if there exists a Baire vector space topology on the dual $F^*$ for which $\on{ev}_x$ is continuous for all $x \in F$. 
  %(in particular, if $F$ is a Banach space)

	Moreover, the bounded sets of both sides of the equalities are the same.
\end{lemma}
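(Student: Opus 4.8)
The plan is to reduce everything to one elementary identity together with the standard fact that in a locally convex (hence in any convenient) space a subset is bounded if and only if it is weakly bounded; the only genuinely new ingredient is a Baire category argument needed for the Roumieu classes. First I would record the basic identity. Fix $B\in\sB(E)$ and $\ell\in F^*$. Since $\ell$ is linear and bounded it commutes with derivatives, so for $x\in U_B$ and $v_i\in E_B$ one has $\ell\bigl(f^{(k)}(x)(v_1,\dots,v_k)\bigr)=(\ell\o f\o i_B)^{(k)}(x)(v_1,\dots,v_k)$. Taking the supremum over the unit ball of $E_B$ and over $x$ gives, in each of the four types,
\[
  \sup_{w\in\set(f)}|\ell(w)| = \bigl\| \ell\o f\o i_B\bigr\|_{(\cdot)},
\]
where $\set$ is the set-valued map attached to $B$ (and, in the Denjoy--Carleman and Gelfand--Shilov cases, to the chosen $\rh$), and the right-hand side is the matching seminorm defining $\cB(U_B,\R)$, $\cS(E_B,\R)$, $\cB^M_\rh(U_B,\R)$, resp. $\cS^M_{L,\rh}(E_B,\R)$. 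This is pure unravelling of definitions.

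Next come the Beurling equalities \eqref{eq:AvsAb1}. A subset of $F$ is bounded iff it is weakly bounded, i.e.\ iff every $\ell\in F^*$ is bounded on it. Hence $\set(f)$ is bounded iff $\sup_{w\in\set(f)}|\ell(w)|<\infty$ for all $\ell$, which by the identity is exactly $\|\ell\o f\o i_B\|_{(\cdot)}<\infty$ for all $\ell$. Quantifying over all $B$ (and over all $\rh$ in the cases $\BbM$, $\SbLM$) this says precisely that $\ell\o f\o i_B$ lies in the respective space for every $\ell$ and $B$, which is the defining condition of $\cB(U,F)$, $\cS(E,F)$, $\BbM(U,F)$, resp.\ $\SbLM(E,F)$. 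One inclusion of the Roumieu statement is equally immediate: if $f\in\BrM_b(U,F)$ then for each $B$ there is a single $\rh$ with $\set^M_{U_B,\rh}(f)$ bounded, whence $\|\ell\o f\o i_B\|^M_{U_B,\rh}<\infty$ for all $\ell$, so $f\in\BrM(U,F)$, and similarly for $\SrLM$.

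The hard direction, and the only place the Baire hypothesis enters, is $\BrM(U,F)\subseteq\BrM_b(U,F)$ together with its Gelfand--Shilov analogue; I expect this to be the main obstacle. The difficulty is purely one of quantifier order: membership in $\BrM$ furnishes, for each $\ell$ and $B$, some $\rh=\rh_\ell$ with $\|\ell\o f\o i_B\|^M_{U_B,\rh_\ell}<\infty$, whereas boundedness of $\set^M_{U_B,\rh}(f)$ requires one $\rh$ serving all $\ell$ at once. Fix $B$. Since $\rh\mapsto\|\cdot\|^M_{U_B,\rh}$ is decreasing, the sets
\[
  A_{n,m}:=\bigl\{\ell\in F^*: \|\ell\o f\o i_B\|^M_{U_B,n}\le m\bigr\},\qquad n,m\in\N,
\]
cover $F^*$. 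Each $A_{n,m}$ is convex and balanced because $\ell\mapsto\|\ell\o f\o i_B\|^M_{U_B,n}$ is a seminorm, and by the identity this seminorm equals $\sup_{w}|\on{ev}_w(\ell)|$, a supremum of functions continuous for the assumed Baire topology on $F^*$; hence $A_{n,m}$ is closed. By the Baire property some $A_{n_0,m_0}$ has nonempty interior, and a closed convex balanced set with an interior point is a neighbourhood of $0$, hence absorbing. Thus each $\ell$ satisfies $t\ell\in A_{n_0,m_0}$ for some $t>0$, giving $\|\ell\o f\o i_B\|^M_{U_B,n_0}\le m_0/t<\infty$ with $n_0$ independent of $\ell$. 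Therefore $\set^M_{U_B,n_0}(f)$ is weakly bounded, hence bounded, so $f\in\BrM_b(U,F)$; the argument for $\SrLM$ is identical. This establishes \eqref{eq:AvsAb2}.

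Finally, for the statement on bounded sets the same computations apply verbatim after inserting $\sup_{f\in\cF}$. Boundedness of a family $\cF$ in the convenient structure of $\cA(U,F)$ means, by its initial definition, that $\{\ell\o f\o i_B: f\in\cF\}$ is bounded in $\cA(U_B,\R)$ for all $\ell,B$, which via the identity translates into boundedness of $\bigcup_{f\in\cF}\set(f)$ in $F$. In the Roumieu case one reruns the Baire argument with $A_{n,m}=\{\ell: \sup_{f\in\cF}\|\ell\o f\o i_B\|^M_{U_B,n}\le m\}$, still closed (a supremum of the continuous $|\on{ev}_w|$), convex, and balanced, and one uses that the regularity of the (LB)-space $\BrM(U_B,\R)$ forces its bounded sets to be exactly those bounded in a single step $\cB^M_n(U_B,\R)$. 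Everything outside the Baire step is bookkeeping of definitions and the equivalence of boundedness with weak boundedness.
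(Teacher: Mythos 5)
Your proposal is correct and takes essentially the same route as the paper: the Beurling equalities and the statement on bounded sets follow from the identity $\ell(\set(f))=\set(\ell\o f\o i_B)$ plus the fact that weakly bounded sets are bounded, and the Roumieu inclusions follow from a Baire category argument on $F^*$ using that the sets $A_{n,m}$ are closed for the assumed topology because the evaluations $\on{ev}_w$ are continuous. The only cosmetic difference is in exploiting the interior point: you use convexity and balancedness of the sublevel sets to place $0$ in the interior and then invoke absorbency, while the paper translates by the interior point $\ell_0$ and applies the triangle inequality --- the two arguments are equivalent in substance.
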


\begin{proof}
  Let $f : E \supseteq U \to F$ be $C^\infty$. Then, for $\cA \in \{\cB,\cS,\BbM,\SbLM\}$,
    \begin{align*}
      f \in \cA(U,F) 
      &\Longleftrightarrow  
      \forall \ell \in F^* ~\forall \set \in \fS_\cA: \set(\ell \o f) \text{ is bounded in } \R\\
       &\Longleftrightarrow  
       \forall \set \in \fS_\cA ~\forall \ell \in F^*: \ell(\set(f)) \text{ is bounded in } \R\\ 
       &\Longleftrightarrow 
       \forall \set \in \fS_\cA: \set(f) \text{ is bounded in } F\\ 
       &\Longleftrightarrow f \in \cAb(U,F),  
    \end{align*}
    which shows \eqref{eq:AvsAb1}.
	The same argument with some obvious changes shows the equivalence of the boundedness of a set $\mathcal F\subs C^\oo(U,F)$
	in $\cA(U,F)$ and in $\cAb(U,F)$.

  Let $f \in \cA(U,F)$ for $\cA \in \{\BrM,\SrLM\}$.
    Fix $B \in \sB(E)$ and, for $\set \in \fS_{\cA,B}$ and $C>0$, 
    consider the sets
    \[
    A_{\set,C} :=\Big\{\ell\in F^*: |y| \le C \text{ for all } y \in \set(\ell \o f)\}\Big\}
    \]
    which are closed subsets in $F^*$ for the given Baire topology. 
    We have
    $\bigcup_{\set,C}A_{\set,C}= F^*$ and by the Baire property there exist $\set$
    and $C$ such that the interior $\on{int}(A_{\set,C})$ of $A_{\set,C}$ is non-empty. If
    $\ell_0\in \on{int}(A_{\set,C})$, then for each $\ell\in F^*$ there is a $\de>0$ such that 
    $\de\ell\in \on{int}(A_{\set,C})-\ell_0$, and, hence, since
    \begin{align*}
    \de |(\ell\o f)^{(k)}(x)(v_1,\ldots)| &\le |((\de\,\ell+\ell_0)\o f)^{(k)}(x)(v_1,\dots)| +
    |(\ell_0\o f)^{(k)}(x)(v_1,\dots)|
    \end{align*}
    we conclude that the set $\set(\ell \o f)$ is bounded (by $2C/\de$). 
    So the set $\set(f)$ is bounded, and thus $f \in \cAb(U,F)$. 

    To see that a set $\cF \subseteq C^\infty(U,F)$ which is bounded in $\cA(U,F)$ is also bounded in 
    $\cAb(U,F)$ it suffices to repeat the argument with 
    $A_{\set,C} :=\{\ell\in F^*: |y| \le C \text{ for all } y \in \bigcup_{f\in\mathcal F}\set(\ell \o f)\}\}$.
\end{proof}

\begin{proposition}[{\cite[Prop.~5.1]{KrieglMichorRainer14a}}] \label{prop:incl1}
  Let $M=(M_k)$, $L=(L_k)$ be weight sequences, and 
  let $E,F$ be convenient vector spaces.
  We have the following inclusions. 
    \[
      \xymatrix{
        \cS(E,F) \ar@{{ >}->}[r]  & \cB(E,F) \ar@{{ >}->}[r] & C^\infty(E,F) \\
        \SrLM(E,F) \ar@{{ >}->}[r] \ar@{{ >}->}[u]  & \BrM(E,F) \ar@{{ >}->}[u] \ar@{{ >}->}[r] & \CrM(E,F) \ar@{{ >}->}[u]\\
        \SbLM(E,F) \ar@{{ >}->}[r] \ar@{{ >}->}[u]  & \BbM(E,F) \ar@{{ >}->}[u] \ar@{{ >}->}[r] & \CbM(E,F) \ar@{{ >}->}[u]
      }
    \]
\end{proposition}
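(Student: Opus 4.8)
The plan is to establish the nine inclusions in the diagram of Proposition~\ref{prop:incl1} by reducing everything to the corresponding statement between Banach spaces and then invoking the definitions in Subsection~\ref{ssec:convenientstructure}. Since each of the classes $\cA(E,F)$ for $\cA\in\{\cS,\cB,\SLM,\BM\}$ is defined by the single requirement that $\ell\o f\o i_B$ lie in the Banach-space class $\cA(E_B,\R)$ for every $\ell\in F^*$ and every $B\in\sB(E)$, and since the target class $\CM(E,F)$ admits an analogous description through the extension of $C^{[M]}$ to convenient vector spaces recalled in Subsection~\ref{ssec:DC}, it suffices to check each inclusion on the level of the seminorm estimates $\|f\|^M_{E_B,\rh}$, $\|f\|^{(k)}_{E_B}$, etc.\ for a fixed Banach space $E_B$ and $F=\R$. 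Thus the whole proposition is a diagram of elementary pointwise comparisons of defining seminorms.

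First I would record the three ``horizontal'' Banach-space inclusions in each row. For the top row, $\cS(E_B,\R)\hookrightarrow\cB(E_B,\R)$ follows because $\|f\|^{(k)}_{E_B}=\|f\|^{(0,k)}_{E_B}\le\|f\|^{(k,k)}_{E_B}$ (taking the weight exponent on $1+\|x\|$ to be $0$), and $\cB(E_B,\R)\hookrightarrow C^\infty(E_B,\R)$ is trivial. For the Roumieu and Beurling middle and bottom rows the Gelfand--Shilov seminorm dominates the Denjoy--Carleman one for the same $\rh$ and $M$: setting $k=0$ in $\|f\|^{L,M}_{E_B,\rh}$ recovers exactly $\|f\|^M_{E_B,\rh}$ up to the harmless factor $L_0=1$ coming from $M_0=1\le M_1$, so $\|f\|^M_{E_B,\rh}\le\|f\|^{L,M}_{E_B,\rh}$, giving $\SLM\hookrightarrow\BM$ in both types; and $\BM(E_B,\R)\hookrightarrow\CM(E_B,\R)$ holds because the global estimate over all of $E_B$ dominates the local one over each compact $K$. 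The ``vertical'' inclusions between the Roumieu, Beurling, and smooth rows then follow from the standard quantifier comparison: a Beurling estimate (valid for all $\rh>0$) implies the corresponding Roumieu estimate (valid for some $\rh$), which in turn forces membership in $C^\infty$; concretely one checks $\BbM(E_B,\R)\subseteq\BrM(E_B,\R)\subseteq\CrM(E_B,\R)$ and likewise for the Gelfand--Shilov column, taking care to use the convention that mixed $[M]$-symbols are not to be interpreted by mixing types.

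Having verified every edge between Banach spaces, I would lift each inclusion to convenient vector spaces by a uniform argument: if $f\in\cA_1(E,F)$ then for every $\ell\in F^*$ and $B\in\sB(E)$ the composite $\ell\o f\o i_B$ lies in $\cA_1(E_B,\R)\subseteq\cA_2(E_B,\R)$, whence $f\in\cA_2(E,F)$ directly from the definition; the continuity of the inclusion follows because the defining seminorm of $\cA_2$ is dominated by that of $\cA_1$ on each factor $\cA_i(E_B,\R)$, so the initial locally convex structures are compatible. Since each arrow is moreover injective (all classes sit inside $C^\infty(E,F)$ and agree with it as sets of mappings), the maps are the claimed continuous linear injections.

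The only genuinely delicate point, and the one I would flag as the main obstacle, is the Roumieu horizontal step $\BrM\hookrightarrow\CrM$ between convenient vector spaces, where the existential quantifier on $\rh$ interacts with the quantifier on $B$: in $\BrM$ one has, for each $B$, some $\rh$ uniform over all of $U_B$, whereas membership in $\CrM$ requires, for each $B$ and each compact $K\subseteq U_B$, some $\rh$ possibly depending on $K$. This direction is easy since a single global $\rh$ works for every $K$, but one must confirm that the projective/inductive descriptions match the set-valued boundedness formulations of Subsection~\ref{ssec:convenientstructure}; here I would invoke Lemma~\ref{AvsAb}, which identifies the Roumieu classes $\BrM(U,F)$ and $\SrLM(E,F)$ with their boundedness-defined counterparts $\BrM_b$ and $\SrLMb$ under the stated Baire condition, thereby making the comparison of the existential-$\rh$ estimates routine and simultaneously ensuring that bounded sets correspond under each inclusion. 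With this identification in hand the remaining estimates are the elementary seminorm comparisons above, and the proposition follows.
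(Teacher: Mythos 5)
The paper never proves this proposition itself: it is quoted, diagram and all, from \cite[Prop.~5.1]{KrieglMichorRainer14a}, so there is no internal proof to compare yours against, and a self-contained argument like yours is exactly what the citation encapsulates. In substance your argument is correct: since each of $\cB$, $\cS$, $\BM$, $\SLM$ (and, by the set-up of the references cited in Subsection~\ref{ssec:DC}, also $\CM$) is defined on convenient spaces by the requirement that $\ell \o f \o i_B$ lie in the corresponding Banach-space class with scalar target, every arrow in the diagram reduces to a termwise comparison of defining seminorms on $U_B\subseteq E_B$ with target $\R$; and those comparisons (put $k=0$ and use $L_0=1$ for $\SLM\hookrightarrow\BM$; restrict the supremum from $U_B$ to compact $K$ for $\BM\hookrightarrow\CM$; weaken the universal quantifier on $\rh$ to an existential one for Beurling $\hookrightarrow$ Roumieu) are exactly as you say. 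The lift back to convenient spaces, including boundedness of the inclusions via the initial locally convex structures, is also right.

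Two points need repair, though neither is fatal. First, your ``concrete'' verification of the vertical arrows is misstated: the arrows from the Roumieu row into the top row are $\SrLM(E,F)\to\cS(E,F)$ and $\BrM(E,F)\to\cB(E,F)$, whereas $\BrM\subseteq\CrM$ (what you wrote) is a horizontal arrow you had already handled. What is actually needed is the observation that a single Roumieu bound $\sup_{k,x}\|f^{(k)}(x)\|/(\rh^{k}k!\,M_{k})<\infty$ yields, for each \emph{fixed} $k$, finiteness of $\|f\|^{(k)}_{U_B}$, and similarly with the weights $(1+\|x\|)^{k}$ in the Gelfand--Shilov column; this is a one-line estimate of the same kind as your others, but it is not the chain you wrote down. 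Second, the closing appeal to Lemma~\ref{AvsAb} is both unnecessary and risky: its Roumieu part requires a Baire topology on the dual $F^*$, which a general convenient $F$ need not admit, so a proof that genuinely depended on it would not cover the stated generality. Your own reduction makes it superfluous: after composing with $\ell$ and $i_B$ the target is $\R$, where the classes and their boundedness-defined variants coincide trivially, and boundedness of the inclusions comes from the initial structures, not from Lemma~\ref{AvsAb}. Delete that paragraph and the proof stands as written.
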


%-----------------------------------------------------------------------------------------------------------------------
\section{Working up to the exponential law} \label{sec:workingup}
%-----------------------------------------------------------------------------------------------------------------------

%-----------------------------------------------------------------------------------------------------------------------
\subsection{Projective descriptions in the Roumieu cases} \label{sec:proj}
%-----------------------------------------------------------------------------------------------------------------------

We define
\begin{align*} \label{eq:sR}
\begin{split}
  \sR &:= \{(r_k)_{k\in \N} \subseteq \R_{>0} :  r_k \si^k \to 0 \text{ for all }\si>0 \} \\
  \sR' &:= \{(r_k) \in \sR :  r_kr_\ell \ge r_{k+\ell} \text{ for all } k,\ell\in \N\}.
\end{split}
\end{align*}

\begin{lemma*} %\label{lem:proj} 
  Let $a_\al \ge 0$ for $\al \in \N^m$.
  The following are equivalent:
  \begin{align}
    \exists \si>0 &: \sup_\al \frac{a_\al}{\si^{|\al|}} < \infty \\
    \forall (r_k) \in \sR &: \sup_\al r_{|\al|} a_\al< \infty \\
    \forall (r_k) \in \sR'~\exists \de>0 &: \sup_\al \de^{|\al|} r_{|\al|} a_\al< \infty 
  \end{align}
\end{lemma*}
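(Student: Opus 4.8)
The plan is to reduce everything to a single nonnegative sequence and then read off the three conditions as statements about its geometric growth rate. Since for each $k$ there are only finitely many $\al\in\N^m$ with $|\al|=k$, I would first set $b_k:=\max_{|\al|=k}a_\al<\infty$. Because the weights $\si^{|\al|}$, $r_{|\al|}$, $\de^{|\al|}$ depend on $\al$ only through $|\al|$, each supremum over $\al$ equals the corresponding supremum over $k$ of $r_kb_k$, $\de^k r_k b_k$, etc.; so the three conditions are equivalent to their one-variable analogues for $(b_k)$. I would also record the elementary description $\sR=\{(r_k): r_k>0,\ r_k^{1/k}\to 0\}$ (which follows from $r_k\si^k\to 0$ for all $\si$), and note that the first condition is precisely $\limsup_k b_k^{1/k}<\infty$.

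With this reduction the two easy implications are immediate. For the first $\Rightarrow$ second, if $b_k\le C\si^k$ then $r_k b_k\le C\,r_k\si^k$, which is bounded because $r_k\si^k\to 0$. The second $\Rightarrow$ third is trivial, since $\sR'\subseteq\sR$ and one may take $\de=1$. Thus the whole lemma reduces to closing the cycle with the implication third $\Rightarrow$ first.

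I would prove third $\Rightarrow$ first by contraposition, which is where the real content lies. Assuming $\limsup_k b_k^{1/k}=\infty$, I must produce a \emph{single} sequence $(r_k)\in\sR'$ with $\sup_k \de^k r_k b_k=\infty$ for \emph{every} $\de>0$ simultaneously. The device is to write $r_k=e^{-u_k}$ where $u$ is convex and piecewise linear with $u_0=0$ and integer slopes increasing to $\infty$: convexity together with $u_0=0$ forces superadditivity $u_k+u_\ell\le u_{k+\ell}$, i.e.\ $r_kr_\ell\ge r_{k+\ell}$, while slopes tending to $\infty$ give $u_k/k\to\infty$, i.e.\ $r_k^{1/k}\to 0$; hence $(r_k)\in\sR'$. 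I would then choose a sparse subsequence $k_j$ with $\log b_{k_j}\ge 2j\,k_j$ (possible as $\limsup_k b_k^{1/k}=\infty$) and put slope $j$ on $[k_j,k_{j+1}]$, so that $u_{k_j}\le jk_j\le\tfrac12\log b_{k_j}$. Then for each fixed $\de>0$,
\[
  \de^{k_j} r_{k_j} b_{k_j}=\exp\!\big(k_j\log\de+\log b_{k_j}-u_{k_j}\big)\ge \exp\!\big(k_j(\log\de+j)\big)\longrightarrow\infty,
\]
which defeats the third condition and completes the contrapositive.

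The only delicate point is this construction. One must simultaneously satisfy the multiplicative constraint defining $\sR'$ and the super-geometric decay defining $\sR$, while still keeping $r_{k_j}b_{k_j}$ so large that no factor $\de^{k_j}$ can tame it. Encoding $\sR'$-membership as convexity of $-\log r_k$ is exactly what reconciles these requirements: superadditivity of a convex exponent is automatic, and its slopes can be dictated freely along a sufficiently sparse subsequence. I expect the bookkeeping to verify $u_{k_j}\le\tfrac12\log b_{k_j}$ together with $u_k/k\to\infty$ to be the main thing to get right.
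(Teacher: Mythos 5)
Your proof is correct, and it differs from the paper's precisely in the one implication that carries the real content. The paper's proof shares your reduction to $b_k:=\max_{|\al|=k}a_\al$ and the two easy implications: \thetag{1}$\Rightarrow$\thetag{2} via boundedness of $r_k\si^k$, and \thetag{2}$\Rightarrow$\thetag{3} by taking $\de=1$. But for \thetag{3}$\Rightarrow$\thetag{1} the paper simply invokes \cite[9.2, (4$\Rightarrow$1)]{KM97}, a lemma characterizing positive radius of convergence of the power series $\sum_k b_k t^k$ by testing against sequences $(r_k)\in\sR'$, and then converts positive radius of convergence into the bound $\sup_k b_k/\si^k<\infty$. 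You instead prove the contrapositive from scratch: assuming $\limsup_k b_k^{1/k}=\infty$, you build a single $(r_k)\in\sR'$ that defeats every $\de>0$, writing $r_k=e^{-u_k}$ with $u$ convex, piecewise linear, $u_0=0$, slope $j$ on $[k_j,k_{j+1}]$, where $\log b_{k_j}\ge 2jk_j$ along a subsequence. The three key points all check out: convexity with $u_0=0$ gives superadditivity $u_k+u_\ell\le u_{k+\ell}$, hence $r_kr_\ell\ge r_{k+\ell}$; slopes tending to $\infty$ give $u_k/k\to\infty$, i.e.\ $r_k^{1/k}\to 0$, so $(r_k)\in\sR$; and the bound $u_{k_j}\le jk_j\le\tfrac12\log b_{k_j}$ leaves $r_{k_j}b_{k_j}\ge e^{jk_j}$, which no geometric factor $\de^{k_j}$ can tame. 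What your route buys is self-containedness and transparency -- the lemma no longer rests on an external result (whose proof in \cite{KM97} is itself of a similar constructive flavor, exhibiting the tension between the multiplicative constraint defining $\sR'$ and super-geometric decay); what the paper's route buys is brevity, delegating the construction to a standard reference.
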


\begin{proof}
   Set $b_k := \max_{|\al|=k} a_\al$.

  \thetag{1} $\Rightarrow$ \thetag{2}
  There exists $\si>0$ such that 
  \begin{align*}
    r_{|\al|} a_{\al} = r_{|\al|} \si^{|\al|} (a_{\al}/\si^{|\al|}) 
  \end{align*}
  is bounded uniformly in $\al \in \N^m$.
  
  \thetag{2} $\Rightarrow$ \thetag{3} Use $\de=1$.
  
  \thetag{3} $\Rightarrow$ \thetag{1} 
  For $(r_k) \in \sR'$ there exists $\de>0$
  so that
  \[
   \sup_{k\in \N} \de^k\, r_k\, b_k = \sup_{k\in \N} \max_{|\al|=k} \de^{|\al|}\, r_{|\al|}\, a_{\al} 
   = \sup_{\al \in \N^m} \de^{|\al|}\, r_{|\al|}\, a_{\al} < \infty. 
  \]  
  By
  \cite[9.2(4$\Rightarrow$1)]{KM97} 
  the formal power series $\sum_{k\ge 0} b_k t^k$ has positive 
  radius of convergence. Thus $(b_k/\si^k)_k$ and hence also $(a_{\al}/\si^{|\al|})_{\al}$ is bounded for some $\si>0$.
  This implies \thetag{1}.
\end{proof}

For a $C^\infty$-mapping $f : E \supseteq U \to F$ between Banach spaces and a positive sequence $(r_k)$ consider
\begin{align*}
  \set^M_{U,(r_k)}(f) &:= \Big\{r_k \frac{f^{(k)}(x)(v_1,\dots,v_k)}{k!\, M_k}  : k\in \mathbb N,x\in U,\|v_i\|\leq 1\Big\} \\
  \set^{L,M}_{E,(r_k)}(f) &:= \Big\{r_{k+\ell} \frac{(1+\|x\|)^k f^{(\ell)}(x)(v_1,\dots,v_\ell)}{k! \ell!\, L_k M_\ell}  : 
    k,\ell \in \N,x\in E,\|v_i\|\leq 1\Big\}.
\end{align*}
In particular, for $\si>0$ we have $\set^M_{U,(\si^{-k})}(f) = \set^M_{U,\si}(f)$ and 
$\set^{L,M}_{E,(\si^{-k})}(f) = \set^{L,M}_{E,\si}(f)$. 
Define 
\begin{align*}
  \fS_{\BrM}^R &:= \{\set^M_{U,(r_k)} : (r_k) \in \sR\}, \\ 
  \fS_{\SrLM}^R &:= \{\set^{L,M}_{E,(r_k)} : (r_k) \in \sR\}.
\end{align*}

\begin{proposition*} 
For a $C^\infty$-mapping $f : E \supseteq U \to F$ between Banach spaces $E$ and $F$ 
the following are equivalent.
\begin{enumerate}
\item $f$ is $\cB^{\rM}=\cBb^{\rM}$.
\item For each $\set \in \fS_{\BrM}^R$, 
the set $\set(f)$ is bounded in $F$.
\item For each sequence $(r_k) \in \sR'$ 
there exists 
$\de>0$ such that the set $\set^M_{U,(r_k\, \de^k)}(f)$
is bounded in $F$. 
\end{enumerate}
Moreover, the following are equivalent.
\begin{enumerate}
  \item $f$ is $\SrLMb= \SrLM$.
  \item For each $\set \in \fS_{\SrLM}^R$, 
  the set $\set(f)$ is bounded in $F$.
  \item For each sequence $(r_k) \in \sR'$  
  there exists 
  $\de>0$ such that the set $\set^{L,M}_{E,(r_k\, \de^k)}(f)$
  is bounded in $F$. 
  \end{enumerate}
\end{proposition*}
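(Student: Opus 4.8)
The plan is to reduce both equivalences to the scalar Lemma* proved just above, applied to a single nonnegative family extracted from $f$. The key observation is that, because $F$ is a Banach space, a set $\set(f) \subseteq F$ is bounded exactly when the supremum of the norms of its elements is finite, and for each of the sets appearing here that supremum factors as a supremum over the base point and the unit directions followed by a supremum over the order(s) of differentiation. Thus every clause ``$\set(f)$ is bounded in $F$'' turns into a purely scalar condition on a family $a_\al \ge 0$ indexed by $\N^m$, and the three conditions of the Proposition will match, one by one, the three conditions of the Lemma*.

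For the $\BrM$ case I would take $m = 1$ and set
\[
  a_k := \sup\Big\{ \tfrac{\|f^{(k)}(x)(v_1,\dots,v_k)\|_F}{k!\,M_k} : x \in U,\ \|v_i\| \le 1 \Big\} \in [0,\infty], \quad k \in \N.
\]
Reading off the definition of $\set^M_{U,(r_k)}$ one sees that $\set^M_{U,(r_k)}(f)$ is bounded in $F$ if and only if $\sup_k r_k\, a_k < \infty$; specializing to $r_k = \si^{-k}$ gives $\|f\|^M_{U,\si} = \sup_k a_k/\si^k$. Hence (recalling that for Banach $E,F$ membership in $\BrM$ means $\exists \si>0: \|f\|^M_{U,\si}<\infty$) the Proposition's conditions (1), (2), (3) become precisely conditions \thetag{1}, \thetag{2}, \thetag{3} of the Lemma* for the family $(a_k)$, and the Lemma* closes the case.

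For the Gelfand--Shilov case I would repeat this with $m = 2$, writing $\al = (k,\ell)$, $|\al| = k+\ell$, and
\[
  a_{(k,\ell)} := \sup\Big\{ \tfrac{(1+\|x\|)^k\,\|f^{(\ell)}(x)(v_1,\dots,v_\ell)\|_F}{k!\,\ell!\,L_k M_\ell} : x \in E,\ \|v_i\| \le 1 \Big\}.
\]
The one point that needs attention is the reindexing: the weight sequence enters $\set^{L,M}_{E,(r_k)}$ only through $r_{k+\ell}$, so $\set^{L,M}_{E,(r_k)}(f)$ is bounded iff $\sup_{k,\ell} r_{k+\ell} a_{(k,\ell)} = \sup_\al r_{|\al|} a_\al < \infty$ and $\set^{L,M}_{E,(r_k\de^k)}(f)$ is bounded iff $\sup_\al \de^{|\al|} r_{|\al|} a_\al < \infty$. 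With this grouping by total degree the three conditions again coincide with \thetag{1}--\thetag{3} of the Lemma*, now for $(a_\al)_{\al \in \N^2}$.

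Finally I would dispose of two routine points. The identification $\BrM = \BrM_b$ (resp.\ $\SrLM = \SrLMb$) hidden in condition (1) follows from Lemma~\ref{AvsAb}: since $F$ is Banach, $F^*$ with its norm topology is a Baire space on which every $\on{ev}_x$ is continuous, so \eqref{eq:AvsAb2} applies, and for Banach $E,F$ these classes agree with those of Subsection~\ref{ssec:Banach}. The Lemma* assumes $a_\al < \infty$, whereas a priori $a_\al$ could be $+\infty$; but if some $a_{\al_0} = \infty$ then each of (1), (2), (3) fails (every member of $\sR$ is strictly positive), so the equivalence holds trivially, and otherwise $(a_\al)$ is an admissible input. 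I expect no genuine obstacle here: the analytic substance is entirely contained in the already-proved Lemma*, and the task is the bookkeeping that matches the Banach-valued conditions to their scalar counterparts --- the only mild subtlety being the collapse of $(k,\ell)$ to the single total degree $|\al| = k+\ell$ in the Gelfand--Shilov case.
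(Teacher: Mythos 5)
Your proof is correct and is essentially the paper's own argument: the paper likewise defines $a_k:=\sup_{x\in U}\|f^{(k)}(x)\|_{\Lin^k(E;F)}/(k!\,M_k)$ (resp.\ $a_{k,\ell}$ with the weight $(1+\|x\|)^k/(k!\ell!\,L_kM_\ell)$) and applies the scalar Lemma, with the Gelfand--Shilov case handled by indexing over $\N^2$ and total degree $k+\ell$. Your additional remarks (the Baire argument for $\BrM=\BrM_b$ via Lemma~\ref{AvsAb}, and the trivial case where some $a_\al=\infty$) are correct bookkeeping that the paper leaves implicit.
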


\begin{proof}
  For $\cA=\BrM$ set
  \begin{align*}
  a_k:=\sup_{x\in U}\frac{\|f^{(k)}(x)\|_{\Lin^k(E;F)}}{k!\, M_k},
\end{align*}
and for $\cA=\SrLM$ set
\begin{align} \label{eq:projS} \tag{4}
  a_{k,\ell}:=\sup_{x\in E}\frac{(1+\|x\|)^k \|f^{(\ell)}(x)\|_{\Lin^\ell(E;F)}}{k!\ell!\,L_k M_\ell},
\end{align}
and apply Lemma \ref{sec:proj}.
\end{proof}

%-----------------------------------------------------------------------------------------------------------------------
\subsection{Testing with bounded linear functionals that detect bounded sets} \label{lem:Bdetect}
%-----------------------------------------------------------------------------------------------------------------------

\begin{lemma*} 
Let $E$ be a Banach space, let $U \subseteq E$ be open, 
and let $F$ be a convenient vector space.
Let $\sS$ be a
family of bounded linear functionals on $F$ which together detect bounded
sets (i.e., $B\subseteq F$ is bounded if and only if $\ell(B)$ is bounded for all
$\ell\in\sS$). Then:
\begin{align*}
  f \in \cB(U,F) ~&\Longleftrightarrow~  \ell\o f \in \cB(U,\R) \text{ for all } \ell \in \sS, \\
  f \in \cS(E,F) ~&\Longleftrightarrow~  \ell\o f \in \cS(E,\R) \text{ for all } \ell \in \sS, \\
  f \in \BM(U,F) ~&\Longleftrightarrow~  \ell\o f \in \BM(U,\R) \text{ for all } \ell \in \sS, \\
  f \in \SLM(E,F) ~&\Longleftrightarrow~  \ell\o f \in \SLM(E,\R) \text{ for all } \ell \in \sS.
\end{align*}
\end{lemma*}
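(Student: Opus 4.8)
The plan is to route both implications through the intrinsic reformulation of membership in terms of boundedness in $F$ of the set-valued maps $\set(f)$ introduced before Lemma~\ref{AvsAb}, exploiting that by hypothesis $\sS$ detects bounded sets exactly as $F^*$ always does. The one elementary observation that makes this work is that a bounded (hence smooth) linear functional commutes with all the set constructions, i.e.\ $\ell(\set(f))=\set(\ell\o f)$ for every $\set$ occurring, because $\ell$ commutes with the iterated derivatives and with the scalar normalising factors. Consequently $\set(f)$ is bounded in $F$ if and only if $\set(\ell\o f)$ is bounded in $\R$ for all $\ell\in\sS$, this equivalence being precisely the defining property of $\sS$.

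First I would settle smoothness. Each $\ell\o f$ with $\ell\in\sS$ lies in $\cA(U,\R)\subseteq C^\oo(U,\R)$, and since $\sS$ detects bounded sets, smoothness of a map into the convenient vector space $F$ may be tested with $\sS$, so $f\in C^\oo(U,F)$ by \cite{KM97}. With this in hand I treat $\cB$, $\cS$ and the Beurling classes $\BbM$, $\SbLM$ at once: by Lemma~\ref{AvsAb} (with values in $\R$ and with values in $F$; no Baire hypothesis is needed in these cases) membership is equivalent to boundedness of $\set(f)$ for every $\set$ in the corresponding family $\fS_\cA$. The equivalence of the previous paragraph, after swapping the two universal quantifiers, then turns $f\in\cA(U,F)$ into $\ell\o f\in\cA(U,\R)$ for all $\ell\in\sS$; the forward implication is the same chain read in reverse, using that every bounded functional maps bounded sets to bounded sets.

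The point requiring care is the Roumieu case $\cA\in\{\BrM,\SrLM\}$, where the identity $\cA=\cA_b$ of Lemma~\ref{AvsAb} is only available under a Baire hypothesis on $F^*$ which we do not assume. I would sidestep it using the projective descriptions of Subsection~\ref{sec:proj}, which characterise the \emph{scalar} Roumieu functions $g\in\BrM(U,\R)$ by boundedness of $\set^M_{U,(r_k)}(g)$ for all $(r_k)\in\sR$ — with no Baire hypothesis, the target being $\R$. Applying this to $g=\ell\o f$ and using $\ell(\set^M_{U,(r_k)}(f))=\set^M_{U,(r_k)}(\ell\o f)$ together with detection of bounded sets yields
\[
  f\in\BrM(U,F)\iff\big(\forall (r_k)\in\sR:\ \set^M_{U,(r_k)}(f)\text{ bounded in }F\big)\iff\big(\forall \ell\in\sS:\ \ell\o f\in\BrM(U,\R)\big),
\]
and likewise for $\SrLM$. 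Here the first equivalence applies the scalar description to all $\ell\in F^*$ and uses that $F^*$ detects bounded sets, so that membership of the $F$-valued $f$ is expressed purely through boundedness in $F$ of the maps $\set^M_{U,(r_k)}(f)$; the second replaces $F^*$ by $\sS$ via the detection hypothesis. I expect this substitution of the projective $\sR$-description for the Baire-dependent identity $\BrM=\BrM_b$ to be the crux; the remainder is the bornological bookkeeping above, where for the Banach domain $E$ one may take $B$ to be the unit ball so that $i_B=\Id$.
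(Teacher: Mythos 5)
Your proposal is correct and follows essentially the same route as the paper's own proof: smoothness is reduced to the scalar case via \cite{KM97}, the cases $\cB$, $\cS$, $\BbM$, $\SbLM$ are handled through Lemma~\ref{AvsAb} together with the identity $\ell(\set(f))=\set(\ell\o f)$ and the detection hypothesis, and the Roumieu cases $\BrM$, $\SrLM$ are handled exactly as in the paper by substituting the projective $\sR$-description of Proposition~\ref{sec:proj} for the Baire-dependent identity $\cA=\cA_b$. Your identification of that substitution as the crux matches the structure of the paper's argument precisely.
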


\begin{proof}
For $C^\infty$-curves this follows from \cite[2.1 and 2.11]{KM97}, and, by composing with such, 
it follows for $C^\infty$-mappings $f: U \to F$. 

For $\cA\in \{\cB,\cS,\BbM,\SbLM\}$ we have, by Lemma \ref{AvsAb},  
\begin{align*}
  f \in \cA(U,F) ~&\Longleftrightarrow ~f \in \cAb(U,F) \\
  &\Longleftrightarrow ~\forall \set \in \fS_\cA: ~\set(f) \text{ is bounded in } F \\
  &\Longleftrightarrow ~\forall\ell \in \sS ~\forall \set \in \fS_\cA: ~\ell(\set(f)) = \set(\ell \o f) \text{ is bounded in } \R
\end{align*}
since $\sS$ detects bounded sets.

For $\cA\in \{\BrM,\SrLM\}$ we have, by Proposition \ref{sec:proj},
\begin{align*}
  f \in \cA(U,F) ~&\Longleftrightarrow ~\forall \ell \in F^* : \ell\o f \in \cA(E,\R) \\
  &\Longleftrightarrow ~\forall \ell \in F^* ~\forall \set \in \fS_\cA^R : ~\set(\ell \o f) \text{ is bounded in } \R \\
  &\Longleftrightarrow ~\forall \set \in \fS_\cA^R : ~\set(f) \text{ is bounded in } F \\
  &\Longleftrightarrow ~\forall \ell \in \sS ~\forall \set \in \fS_\cA^R : ~\set(\ell \o f) \text{ is bounded in } \R  
\end{align*}
since $\sS$ detects bounded sets.
\end{proof}

%-----------------------------------------------------------------------------------------------------------------------
\subsection{The uniform boundedness principle} \label{thm:uniformboundedness}
%-----------------------------------------------------------------------------------------------------------------------

\begin{theorem*} 
Let $E$, $F$, $G$ be convenient vector spaces and let $U \subseteq F$ be $c^\infty$-open. 
A linear mapping $T:E\to \cB(U,G)$, $T:E\to \cS(F,G)$, $T:E\to \BM(U,G)$, or $T:E\to \SLM(F,G)$
is bounded if and only if $\on{ev}_x\o T: E\to G$ is bounded for every $x\in U$ or $x \in F$, respectively. 
\end{theorem*}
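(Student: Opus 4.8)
The plan is to reduce, by means of the initial description of the convenient structure in Subsection~\ref{ssec:convenientstructure}, to the case of scalar-valued mappings on a Banach domain, and then to invoke the uniform boundedness principle for Fréchet and webbed convenient vector spaces from \cite{KM97}. One implication is immediate: for each $x$ the point evaluation $\on{ev}_x : \cA(U,G) \to G$ is bounded, since for every $\ell \in G^*$ and every $B$ with $x \in U_B$ one has $\ell \o \on{ev}_x = \on{ev}_x \o \cA(i_B,\ell)$ (using $i_B(x)=x$), a composite of bounded maps, and $G^*$ detects bounded sets in $G$. Hence if $T$ is bounded then so is $\on{ev}_x \o T$.

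For the converse I would argue as follows. By Subsection~\ref{ssec:convenientstructure} the maps $\cA(i_B,\ell)$ exhibit $\cA(U,G)$ as a $c^\infty$-closed subspace of the product $\prod_{\ell,B}\cA(U_B,\R)$, so a linear map $T$ is bounded if and only if each composite
\[
  S_{B,\ell} := \cA(i_B,\ell)\o T : E \to \cA(U_B,\R), \qquad e \mapsto \ell \o T(e)\o i_B,
\]
is bounded, where now $U_B = U \cap F_B$ is open in the Banach space $F_B$. The pointwise hypothesis descends cleanly: for $x \in U_B$,
\[
  \on{ev}_x \o S_{B,\ell} = \ell \o \on{ev}_x\o T : E \to \R,
\]
which is bounded because $\on{ev}_x\o T$ is bounded by assumption and $\ell$ is bounded linear. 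Thus it suffices to prove the theorem in the scalar-valued Banach-domain case $T : E \to \cA(U_B,\R)$ for $\cA \in \{\cB,\cS,\BbM,\SbLM,\BrM,\SrLM\}$.

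In this scalar case the target is, by Subsection~\ref{ssec:Banach}, a Fréchet space (for $\cB,\cS,\BbM,\SbLM$) or a compactly regular $(LB)$-space (for $\BrM,\SrLM$); in either case it is webbed, and the point evaluations $\{\on{ev}_x : x \in U_B\}$ form a point-separating family of bounded linear functionals (each $\on{ev}_x$ is dominated by the $k=0$ seminorm resp.\ norm, and a function vanishing at every point is zero). The uniform boundedness principle for webbed convenient vector spaces (Fréchet spaces, countable inductive limits of such, and closed subspaces thereof) with respect to a point-separating set of bounded linear functionals, see \cite{KM97}, then forces $S_{B,\ell}$ to be bounded, whence $T$ is bounded.

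The main obstacle is precisely this scalar step, which is substantive rather than formal. The point evaluations do \emph{not} detect bounded sets in $\cA(U_B,\R)$ — a sequence may be pointwise bounded while its higher derivatives blow up — so neither Lemma~\ref{AvsAb} nor the testing lemma of Subsection~\ref{lem:Bdetect} applies here. The upgrade from pointwise boundedness of a \emph{linear} map to genuine boundedness rests on the completeness together with the Baire/webbed structure of the target, which is exactly what the cited uniform boundedness principle encodes. The delicate instance is the Roumieu case, where the target is only an $(LB)$-space; there I would lean on the compact regularity recorded in \cite[Lemma~4.9]{KrieglMichorRainer14a} (quoted in Subsection~\ref{ssec:Banach}) to guarantee that the space is webbed and ultrabornological, so that the principle is indeed applicable.
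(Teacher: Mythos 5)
Your proposal is correct and takes essentially the same route as the paper: both reduce, via the initial structure of Subsection~\ref{ssec:convenientstructure}, to scalar-valued linear maps into $\cA(U_B,\R)$ and then exploit that this target is a Fr\'echet space for $\cA\in\{\cB,\cS,\BbM,\SbLM\}$ and a webbed compactly regular (LB)-space for $\cA\in\{\BrM,\SrLM\}$. The only cosmetic difference is that you invoke the uniform boundedness principle for webbed spaces with a point-separating family of bounded functionals from \cite{KM97}, while the paper cites the closed graph theorem \cite[52.10]{KM97} directly, of which that principle is a corollary.
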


\begin{proof}
  $(\Rightarrow)$   
  For $x\in U$ and $\ell\in G^*$, the linear mapping $\ell\o\on{ev}_x = \cA(x,\ell):\cA(U,G)\to
  \R$ is continuous, thus $\on{ev}_x$ is bounded. 
  Therefore, if $T$ is bounded then so is $\on{ev}_x\o T$.
  
  $(\Leftarrow)$
  Suppose that $\on{ev}_x\o T$ is bounded for all $x\in U$.
  By definition it is enough to show that $T$ is bounded for Banach spaces $E$, $F$, and $G=\RR$
  which follows from the 
  closed graph theorem \cite[52.10]{KM97}, 
  as $\cA(U,\R)$ is a Fr\'echet space if $\cA \in \{\cB,\cS,\BbM\}$ or a compactly regular (LB)-space and thus webbed if 
  $\cA \in \{\BrM,\SrLM\}$, see Subsection~\ref{ssec:Banach}.
\end{proof}

%-----------------------------------------------------------------------------------------------------------------------
\section{The \texorpdfstring{$\cB$}{B}, \texorpdfstring{$\cS$}{S},  \texorpdfstring{$\cD$}{D}, 
\texorpdfstring{$\BM$}{BM}, \texorpdfstring{$\SLM$}{SLM}, and \texorpdfstring{$\DM$}{DM} exponential law} \label{sec:BSexp}
%-----------------------------------------------------------------------------------------------------------------------

First we prove the exponential law for \texorpdfstring{$\cB$}{B}, \texorpdfstring{$\cS$}{S},  
\texorpdfstring{$\BM$}{BM}, and \texorpdfstring{$\SLM$}{SLM}. At the end of the section we obtain the $\cD$ and the $\DM$ 
exponential law as an application of the $\cB$ and the $\BM$ case, respectively.  

\begin{theorem} \label{thm:Bexp}
  Let $L=(L_k)$, $M=(M_k)$ be weakly log-convex weight sequences with moderate growth.
  For convenient vector spaces $E_1, E_2, F$ and $c^\infty$-open $U_i \subseteq E_i$ we have the exponential law:
  \begin{align*} 
    \cB(U_1 \times U_2,F) &= \cB(U_1,\cB(U_2,F)), \\ 
    \cS(E_1 \times E_2,F) &= \cS(E_1,\cS(E_2,F)), \\
    \BM(U_1 \times U_2,F) &= \BM(U_1,\BM(U_2,F)), \\
    \SLM(E_1 \times E_2,F) &= \SLM(E_1,\SLM(E_2,F))
  \end{align*}
\end{theorem}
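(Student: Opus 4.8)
The plan is to reduce the four exponential laws to statements about real-valued functions on Banach spaces, using the convenient-setting machinery established in Sections \ref{sec:convenient} and \ref{sec:workingup}. By the definition of the four classes on convenient vector spaces via the seminorm structure induced by the maps $\cA(i_B,\ell)$, a function $f$ lies in $\cA(U_1\times U_2,F)$ if and only if $\ell\o f\o i_B\in\cA((U_1\times U_2)_B,\R)$ for all $\ell\in F^*$ and all bounded $B$. Since every bounded subset of $E_1\times E_2$ is contained in a product $B_1\times B_2$ of bounded sets (and $(E_1\times E_2)_B$ embeds into $(E_1)_{B_1}\times(E_2)_{B_2}$), the whole theorem will follow once I establish the exponential law for $\cA$ between Banach spaces with real target, i.e.\ $\cA(U_1\times U_2,\R)\cong\cA(U_1,\cA(U_2,\R))$. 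So first I would carry out this bootstrapping step carefully, checking that the initial locally convex structures match up under the $c^\infty$-open product structure.

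For the Banach-space case I would treat the two sides of the exponential law by passing through the underlying $C^\infty$ exponential law $C^\infty(U_1\times U_2,\R)\cong C^\infty(U_1,C^\infty(U_2,\R))$ from \cite{KM97}, which identifies $f$ with $f^\vee$ at the level of smooth maps. The content is then purely quantitative: one must show that $f$ satisfies the relevant derivative estimates on the product exactly when $f^\vee$ satisfies them as a map into the space $\cA(U_2,\R)$. Here I would use the characterizations by boundedness of the set-valued mappings $\set$ from Subsection~\ref{lem:Bdetect} together with Lemma~\ref{AvsAb}, which replace membership in $\cA$ by boundedness of families like $\set^{(k)}_{U}(f)$ or $\set^M_{U,\rh}(f)$. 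The key computation is the chain/Leibniz bookkeeping: a partial Fr\'echet derivative $(f^\vee)^{(j)}(x_1)$ evaluated at directions in $U_1$ is itself an element of $\cA(U_2,\R)$, whose own derivatives in $U_2$ are the mixed partials of $f$; one reorganizes the full derivative $f^{(k)}$ on the product into its bidegree components $\p_1^{j}\p_2^{k-j}f$ and matches the weights.

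The main obstacle will be the weight bookkeeping in the Denjoy--Carleman and Gelfand--Shilov cases, where the combinatorial factor $k!\,M_k$ for a derivative of total order $k$ on the product must be compared with the product weight $j!\,M_j\cdot(k-j)!\,M_{k-j}$ coming from splitting the derivative into its two partial orders. This is precisely where the hypotheses enter: moderate growth \eqref{eq:mg0} controls $M_{j+k}$ against $M_jM_k$ up to a geometric constant, and weak log-convexity (via \eqref{eq:alg} and the binomial-type estimates) handles the factorials, so that the $\rh$-radii on the two sides can be adjusted by a fixed factor. In the Roumieu cases I would additionally invoke the projective description of Subsection~\ref{sec:proj}, replacing the single radius $\rh$ by test sequences $(r_k)\in\sR'$, so that the ``$\exists\rh$'' quantifier on the product is matched with the ``$\exists\rh$'' quantifier for the inner and outer classes; the sequence-splitting $r_{j+k}\le r_jr_k$ built into $\sR'$ is exactly what makes this work. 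For the Gelfand--Shilov class one must simultaneously track the polynomial decay weight $(1+\|x\|)^k$ with its own sequence $L_k$, but since the decay variable and the derivative variable factor through the product independently, the two weight systems do not interfere and each is handled by the same moderate-growth estimate applied to $L$ and to $M$ separately.
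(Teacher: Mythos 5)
Your proposal is correct, and for most of the theorem it coincides with the paper's own proof: the same reduction to Banach spaces $E_i$ and target $\R$ (via products $B_1\times B_2$ of bounded sets on one side and the bornology-generating functionals $\cA(i_{B_2},\ell)$, i.e.\ Lemma~\ref{lem:Bdetect}, on the other), the same passage through the $C^\infty$ exponential law, the same bidegree decomposition of $d^kf$ into $\p_1^{k_1}\p_2^{k_2}f$, and the same division of labor between the hypotheses (moderate growth to split $M_{k_1+k_2}$ against $M_{k_1}M_{k_2}$ in the direction $f\mapsto f^\vee$, weak log-convexity to recombine in the converse direction). The genuine divergence is in the Roumieu cases $\BrM$, $\SrLM$, in the direction from $f^\vee$ back to $f$. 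There the paper does \emph{not} use the projective description: it invokes compact regularity of the (LB)-space $\varinjlim_{\rh_2}\cS^{M}_{L,\rh_2}(E_2,\R)$ (Lemma~\ref{ssec:Banach}) to equip its dual with a Baire topology, applies the Baire-category argument of Lemma~\ref{AvsAb} to convert ``for every functional there exists $\rh_1$'' into boundedness of a single set $\set^{L,M}_{E_1,\rh_1}(f^\vee)$, and then uses regularity once more to place that set inside one step $\cS^{M}_{L,\rh_2}(E_2,\R)$. Your route runs the quantifier swap through Subsection~\ref{sec:proj} instead: since the characterization by the sets $\set^{L,M}_{E_1,(r_k)}$, $(r_k)\in\sR$, involves only universal quantifiers, testing with functionals passes to boundedness in $\SrLM(E_2,\R)$ with no category argument; then, because the $(t_k)$-weighted sup-seminorms with $(t_k)\in\sR$ are continuous on every step (as $\sup_j t_j\rh_2^{\,j}<\infty$) and hence on the inductive limit, one gets the doubly indexed estimate over $\sR\times\sR$, specializes to $t=r\in\sR'$ using $r_kr_\ell\ge r_{k+\ell}$, and recovers a single radius $\si$ from the scalar Lemma~\ref{sec:proj}. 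This is sound — it is precisely the mechanism the paper itself employs for the Sobolev Roumieu class in Lemma~\ref{lem:WR} — and what it buys is an argument avoiding Baire category and compact regularity at this point, at the cost of the extra bookkeeping with the sequences $(r_k)$; the paper's argument is shorter once the functional-analytic properties of the compactly regular (LB)-spaces are granted. One point you should still make explicit: that $f^\vee$ is $C^\infty$ into $\cA(U_2,\R)$ (not merely into $C^\infty(U_2,\R)$) with $d^jf^\vee=(\p_1^jf)^\vee$, which the paper proves following the claim in \cite[5.2]{KMRu}; alternatively, by the remark in Subsection~\ref{ssec:convenientstructure} the smoothness requirement can be subsumed into the boundedness conditions you verify.
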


\begin{remark*}
  In the $\BM$-exponential law the inclusion \thetag{$\supseteq$} holds 
  without $M=(M_k)$ having moderate growth, 
  the inclusion \thetag{$\subseteq$}  
  without $M=(M_k)$ being weakly log-convex. 
  The analogous statement holds for the $\SLM$-exponential law, where the inclusions 
  hold without the respective  
  conditions for $M=(M_k)$ \emph{and} $L=(L_k)$. 
\end{remark*}

\begin{proof}
Let $\cA \in \{\cB,\cS,\BM,\SLM\}$.
We have the $C^\infty$-exponential law $C^{\infty}(U_1 \times U_2,F) \cong C^\infty(U_1,C^\infty(U_2,F))$, by \cite[3.12]{KM97}; 
thus, in the following all mappings are assumed to be smooth.   
We have the following equivalences, where $B \in \sB(E_1 \times E_2)$ and  
$B_i \in \sB(E_i)$.  
  \begin{multline*}
    f \in \cA(U_1 \times U_2,F) 
    \Longleftrightarrow  
    \forall \ell \in F^* ~\forall B: \ell \o f \o i_B \in \cA((U_1 \times U_2)_B,\R)\\
    \Longleftrightarrow  
    \forall \ell \in F^* ~\forall B_1,B_2: \ell \o f \o (i_{B_1} \times i_{B_2}) \in \cA((U_1)_{B_1} \times (U_2)_{B_2},\R)  
  \end{multline*}
For the second equivalence we use that every bounded $B \subseteq E_1 \times E_2$ is contained in $B_1 \times B_2$ for some bounded $B_i \subseteq E_i$, 
and, thus, the inclusion $(E_1 \times E_2)_B \to (E_1)_{B_1} \times (E_1)_{B_2}$ is bounded.
On the other hand,
\begin{multline*}
  f^\vee \in \cA(U_1,\cA(U_2,F)) 
  \Longleftrightarrow  
  \forall B_1: f^\vee \o i_{B_1} \in \cA((U_1)_{B_1},\cA(U_2,F))\\
  \Longleftrightarrow  
  \forall \ell \in F^* ~\forall B_1,B_2: \cA(i_{B_2},\ell) \o f^{\vee} \o i_{B_1} \in \cA((U_1)_{B_1},\cA((U_2)_{B_2},\R))   
\end{multline*}   
For the second equivalence we use Lemma~\ref{lem:Bdetect} and the fact that the linear mappings $\cA(i_{B_2},\ell)$ generate the bornology. 
These considerations imply that we may restrict to Banach spaces $E_i$ and $F = \R$.

\paragraph{{\bf Direction} ($\Rightarrow$)} 
Let $f \in \cA(U_1 \times U_2,\R)$. It is clear that $f^\vee$ takes values in $\cA(U_2,\R)$. 
Moreover, the mapping $f^\vee:U_1\to \cA(U_2,\RR)$ is $C^\oo$ with $d^jf^\vee=(\d_1^jf)^\vee$; this can be proved 
in the same way as the claim in \cite[5.2]{KMRu}.
We have to show that 
\begin{equation} \label{eq:toshow1}
  f^\vee : U_1 \to \cA(U_2,\R) \text{ is } \cA=\cAb.   
\end{equation}

\paragraph{\bf Case $\cA \in \{\cB,\cS\}$}
For $\cA = \cB$, \eqref{eq:toshow1}
is equivalent to 
\begin{align}
  &~\forall k_1 \in \N : ~\set^{(k_1)}_{U_1}(f^\vee) \text{ is bounded in } \cB(U_2,\R) \nonumber\\
  \Longleftrightarrow
  &~\forall k_1,k_2 \in \N : ~\sup\Big\{\|y\|^{(k_2)}_{U_2} : y \in \set^{(k_1)}_{U_1}(f^\vee) \Big\} < \infty \nonumber\\
  \Longleftrightarrow  \label{eq:Bexp2}
  &~\forall k_1,k_2 \in \N : ~\sup_{\substack{x_i \in U_i\\ \|v_j^i\|_{E_i} \le 1}} 
  |\p_2^{k_2}\p_1^{k_1} f(x_1,x_2)(v_1^1,\dots,v_{k_1}^1;v_1^2,\dots,v_{k_2}^2)|
   < \infty, 
\end{align}
for $\cA=\cS$ to 
\begin{align}
  &~\forall k_1,\ell_1 \in \N : ~\set^{(k_1,\ell_1)}_{U_1}(f^\vee) \text{ is bounded in } \cS(U_2,\R) \nonumber\\
  \Longleftrightarrow
  &~\forall k_1,k_2,\ell_1,\ell_2 \in \N : ~\sup\Big\{\|y\|^{(k_2,\ell_2)}_{U_2} : y \in \set^{(k_1,\ell_1)}_{U_1}(f^\vee) \Big\} < \infty \nonumber\\
  \Longleftrightarrow \nonumber
  &~\forall k_1,k_2,\ell_1,\ell_2 \in \N :\\  \label{eq:Sexp2}
  &~\sup_{\substack{x_i \in U_i\\ \|v_j^i\|_{E_i} \le 1}} 
  (1+\|x_1\|_{E_1})^{k_1} (1+\|x_2\|_{E_2})^{k_2} |\p_2^{\ell_2}\p_1^{\ell_1} f(x_1,x_2)(v_1^1,\dots;v_1^2,\dots)|
   < \infty 
\end{align}
which is true as $f \in \cA(U_1\times U_2,\R)$ by assumption (and by the polarization formula \cite[7.13.1]{KM97}).

\paragraph{\bf Case $\cA \in \{\BbM,\SbLM\}$}

We prove the case $\cA = \SbLM$.
The following arguments also give a proof for $\cA=\BbM$ if we 
  set $k_i \equiv 0$ and take the suprema over $x_i \in U_i$. 
\begin{equation*} 
\begin{split}
  \xymatrix{
  \cB^{\bM}(U_2,\R )\ar@{=}[r]  &
\varprojlim _{\rh _2} \cB^{M}_{\rh _2}(U_2,\R)
\ar@{->}[r]^(.7){\ell} \ar@{->}[d]  &\R  \\
    U_1\ar@{->}[u]^{f^\vee} 
 \ar@{.>}[r] &\cB^{M}_{\rh _2}(U_2,\R )
\ar@{.>}[ur]  &  \\
}  
\end{split}  
\end{equation*}
\begin{equation*} 
\begin{split}
  \xymatrix{
  \SbLM(E_2,\R )\ar@{=}[r]  &
\varprojlim_{\rh_2} \cS^{M}_{L,\rh_2}(E_2,\R)
\ar@{->}[r]^(.7){\ell} \ar@{->}[d]  &\R  \\
    E_1\ar@{->}[u]^{f^\vee} 
 \ar@{.>}[r] &\cS^{M}_{L,\rh _2}(E_2,\R )
\ar@{.>}[ur]  &  \\
}  
\end{split}  
\end{equation*}
By Lemma~\ref{lem:Bdetect}, it suffices to show that $f^\vee : E_1 \to \cS^{M}_{L,\rh_2}(E_2,\R)$ is 
$\SbLM= \SbLMb$ for each $\rh_2>0$, 
since every $\ell \in \SbLM(E_2,\R)^*$ factors over some $\cS^{M}_{L,\rh_2}(E_2,\R)$.
Thus it suffices to prove that, for all $\rh_1,\rh_2>0$, 
the set $\set^{L,M}_{E_1,\rh_1}(f^\vee)$ 
is bounded in $\cS^{M}_{L,\rh_2}(E_2,\R)$, or, equivalently, 
\begin{equation} \label{eq:SM2}
  \sup_{\substack{x_i \in E_i\\ k_i,\ell_i \in \N\\ \|v_j^i\|_{E_i} \le 1}} 
  \frac{(1+\|x_2\|_{E_2})^{k_2} (1+\|x_1\|_{E_1})^{k_1}  |\p_2^{\ell_2}\p_1^{\ell_1} f(x_1,x_2)(v_1^1,\dots,v_{\ell_1}^1;v_1^2,\dots,v_{\ell_2}^2)|}
  {\rh_2^{k_2+\ell_2}\,\rh_1^{k_1+\ell_1}\, k_2!\,k_1!\, \ell_2!\,\ell_1!\, L_{k_2}\, L_{k_1}\,  M_{\ell_2}\, M_{\ell_1}} < \infty. 
\end{equation}
Since $L=(L_k)$ and $M=(M_k)$ have moderate growth (\ref{ssec:ws}.\ref{eq:mg0}), i.e., 
\begin{equation} \label{eq:mg}
    L_{k_1+k_2} \le \ta^{k_1+k_2} L_{k_1} L_{k_2} \text{ and }
  M_{\ell_1+\ell_2} \le \ta^{\ell_1+\ell_2} M_{\ell_1} M_{\ell_2} \text{ for some } \ta>0,
\end{equation}
using $\binom{a}{b} \le 2^a$, and setting $\rh := \tfrac{1}{2 \ta} \min \{\rh_1,\rh_2\}$,
the left-hand side of \eqref{eq:SM2} is majorized by 
\begin{align} \label{eq:SM3}
  \sup_{\substack{x_i \in E_i\\ k_i,\ell_i \in \N\\ \|v_j^i\|_{E_i} \le 1}}  
  \frac{(1+\|x_2\|_{E_2})^{k_2} (1+\|x_1\|_{E_1})^{k_1}  |\p_2^{\ell_2}\p_1^{\ell_1} f(x_1,x_2)(v_1^1,\dots,v_{\ell_1}^1;v_1^2,\dots,v_{\ell_2}^2)|}
  {\rh^{k_1+k_2+\ell_1+\ell_2}\, (k_1+k_2)!\, (\ell_1+\ell_2)!\, L_{k_1+k_2}\,  M_{\ell_1+\ell_2}}   
\end{align} 
which is finite as $f \in \SbLM(E_1 \times E_2,\R)$ by assumption.

\paragraph{\bf Case $\cA \in \{\BrM,\SrLM\}$} 
We prove the case $\cA = \SrLM$.
The following arguments also give a proof for $\cA=\BrM$ if we 
  set $k_i \equiv 0$ and take the suprema over $x_i \in U_i$. 
\begin{equation*} 
\begin{split}
\xymatrix{
 \cB^{\rM}(U_2,\R )\ar@{=}[r]  &
\varinjlim _{\rh _2}\cB^{M}_{\rh _2}(U_2,\R )
\ar@{->}[r]^(0.7){\ell}   &\R  \\
   U_1\ar@{->}[u]^{f^\vee} \ar@{.>}[r]  
 &
\cB^{M}_{\rh _2}(U_2,\R )\ar@{->}[u]  &  \\
}
\end{split}  
\end{equation*}
\begin{equation*} 
\begin{split}
\xymatrix{
 \SrLM(E_2,\R )\ar@{=}[r]  &
\varinjlim _{\rh _2}\cS^{M}_{L,\rh _2}(E_2,\R )
\ar@{->}[r]^(0.7){\ell}   &\R  \\
   E_1\ar@{->}[u]^{f^\vee} \ar@{.>}[r]  
 &
\cS^{M}_{L,\rh _2}(E_2,\R )\ar@{->}[u]  &  \\
}
\end{split}  
\end{equation*}
We show that $f^\vee : E_1 \to \varinjlim_{\rh_2} \cS^{M}_{L,\rh_2}(E_2,\R)$ 
is $\SrLMb \subs \SrLM$.
It suffices to prove that there exist $\rh_1,\rh_2>0$ 
such that $\set^{L,M}_{E_1,\rh_1}(f^\vee)$ 
is bounded in $\cS^{M}_{L,\rh_2}(E_2,\R)$, or, equivalently,
\eqref{eq:SM2} holds.
Since $f \in \SrLM(E_1 \times E_2,\R)$, there exists $\rh>0$ so that \eqref{eq:SM3} is finite. 
Setting $\rh_i := 2 \ta \rh$ we have again that the left-hand side of \eqref{eq:SM2} is majorized by 
\eqref{eq:SM3}.

\paragraph{{\bf Direction} ($\Leftarrow$)}
Let $f^\vee : U_1 \to \cA(U_2,\R)$ be $\cA$. 
Then $f^\vee:U_1\to \cA(U_2,\R)\to C^\oo(U_2,\R)$
is $C^\oo$, since the latter inclusion is evidently bounded.

\paragraph{\bf Case $\cA \in \{\cB,\cS\}$} 
That $f^\vee : U_1 \to \cA(U_2,\R)$ is $\cA=\cAb$ implies \eqref{eq:Bexp2} or \eqref{eq:Sexp2}, 
respectively,
and hence $f \in \cA(U_1 \times U_2,\R)$, since 
\begin{equation} \label{eq:pd}
  d^k f(x_1,x_2) = \on{sym}\Big( \sum_{k_1+k_2=k} \p_1^{k_1} \p_2^{k_2} f(x_1,x_2)\Big),
\end{equation}
where $\on{sym}$ denotes symmetrization of multilinear mappings, and, if $\cA=\cS$, using
\begin{align} \label{eq:n}
  \sum_{k_1+k_2=k} (1+a_1)^{k_1} (1+a_2)^{k_2} &\ge 2^{-k} \sum_{k_1+k_2=k} \binom{k}{k_1} (1+a_1)^{k_1} (1+a_2)^{k_2} \nonumber\\
  &= 2^{-k}\, (2+a_1+a_2)^k \nonumber\\
  &\ge  2^{-k}\, (1+a_1+a_2)^k, \quad (a_1,a_2\ge 0), 
\end{align}
for $a_1=\|x_1\|_{E_1}$ and $a_2=\|x_2\|_{E_2}$ and choosing the Banach norm
\begin{equation} \label{eq:productnorm}
  \|(x_1,x_2)\|_{E_1 \times E_2} := \|x_1\|_{E_1}+\|x_2\|_{E_2}   
\end{equation} 
on $E_1 \times E_2$. 
Note that, if $a_{k_1,k_2} \ge 0$ and $b_k := \sum_{k_1+k_2=k} a_{k_1,k_2}$, 
then
\begin{equation} \label{eq:abelem}
   a_{k_1,k_2} \le b_k \le (k+1)\, \max_{k_1+k_2=k} a_{k_1,k_2}, \quad k=k_1+k_2.    
\end{equation} 

\paragraph{\bf Case $\cA \in \{\BbM,\SbLM\}$}
As before we prove the case $\cA=\SbLM$; the case $\cA=\BbM$ follows from the same arguments.

For each $\rh_2>0$, the mapping $f^\vee : E_1 
\to \cS^{M}_{L,\rh_2}(E_2,\R)$ is $\SbLM= \SbLMb$.
So for all $\rh_1,\rh_2>0$
the set $\set^{L,M}_{E_1,\rh_1}(f^\vee)$ is bounded in $\cS^{M}_{L,\rh_2}(E_2,\R)$ and hence \eqref{eq:SM2} holds.
Since $L=(L_k)$ and $M=(M_k)$ are weakly log-convex, we have (cf.\ (\ref{ssec:ws}.\ref{eq:alg}))
\begin{equation} \label{eq:wlc}
    k_1!\, k_2!\, L_{k_1} L_{k_2} \le (k_1+k_2)!\, L_{k_1+k_2} \text{ and }
  \ell_1!\, \ell_2!\, M_{\ell_1} M_{\ell_2} \le (\ell_1+\ell_2)!\, M_{\ell_1+\ell_2},
\end{equation}
the left-hand side of \eqref{eq:SM2} majorizes 
\begin{align} \label{eq:SM4}
  \sup_{\substack{x_i \in E_i\\ k_i,\ell_i \in \N\\ \|v_j^i\|_{E_i} \le 1}} 
  \frac{(1+\|x_2\|_{E_2})^{k_2} (1+\|x_1\|_{E_1})^{k_1}  |\p_2^{\ell_2}\p_1^{\ell_1} f(x_1,x_2)(v_1^1,\dots,v_{\ell_1}^1;v_1^2,\dots,v_{\ell_2}^2)|}
  {\rh_2^{k_2+\ell_2}\,\rh_1^{k_1+\ell_1}\, (k_1+k_2)!\, (\ell_1+\ell_2)!\, L_{k_1+k_2}\, M_{\ell_1+\ell_2}}.     
\end{align}
This implies the statement, using \eqref{eq:pd} and \eqref{eq:n}
for $a_1=\|x_1\|_{E_1}$ and $a_2=\|x_2\|_{E_2}$ and choosing the Banach norm \eqref{eq:productnorm}
on $E_1 \times E_2$.
In the situation of \eqref{eq:abelem} we have 
\begin{equation}
  \sup_{k \in \N} \frac{b_{k}}{(2\rh)^{k}} \le \sup_{k_1,k_2 \in \N} \frac{a_{k_1,k_2}}{\rh^{k_1+k_2}} \le \sup_{k \in \N} \frac{b_{k}}{\rh^{k}}.
\end{equation}

\paragraph{\bf Case $\cA \in \{\BrM,\SrLM\}$}
We prove the case $\cA=\SrLM$; the case $\cA=\BrM$ follows from the same arguments.

The inductive limit $\varinjlim _{\rh _2}\cS^{M}_{L,\rh _2}(E_2,\R )$ is compactly regular, by Lemma~\ref{ssec:Banach}.
So the dual space $(\varinjlim _{\rh _2}\cS^{M}_{L,\rh _2}(E_2,\R ))^*$ 
can be equipped with the Baire topology of the countable limit 
$\varprojlim_{\rh_2} \cS^{M}_{L,\rh _2}(E_2,\R )^*$ of Banach spaces. 
Thus $f^\vee : E_1 \to \varinjlim_{\rh_2} \cS^{M}_{L,\rh_2}(E_2,\R)$ is $\SrLMb$, by Lemma \ref{AvsAb}.
By regularity, there exists $\rh_1 >0$ so that the set $\set^{L,M}_{E_1,\rh_1}(f^\vee)$ 
is contained and bounded in
$\cS^{M}_{L,\rh_2}(E_2,\R)$ for some $\rh_2>0$.
Then the proof can be finished as in the Beurling case.  
\end{proof}

Let us show that the identities in Theorem \ref{thm:Bexp} are bornological isomorphisms.
Note that we cannot simply conclude boundedness of the mappings 
$$\cA(U_1 \times U_2,F)  \rightleftarrows  \cA(U_1,\cA(U_2,F))$$
from the exponential law as in the $C^\infty$ case \cite[3.13]{KM97} or the $\CM$ case \cite[5.5]{KMRu}.
The reason is that no linear mapping except $0$ belongs to $\cA$.

\begin{theorem}\label{thm-7.2}
  Let $L=(L_k)$ and $M=(M_k)$ be weakly log-convex and have moderate growth.
  For convenient vector spaces $E_1$, $E_2$, and $F$ and $c^\infty$-open subsets $U_i \subseteq E_i$, we have 
  bornological isomorphisms:
  \begin{align*} 
    \cB(U_1 \times U_2,F) &\cong \cB(U_1,\cB(U_2,F)), \\ 
    \cS(E_1 \times E_2,F) &\cong \cS(E_1,\cS(E_2,F)), \\
    \BM(U_1 \times U_2,F) &\cong \BM(U_1,\BM(U_2,F)), \\
    \SLM(E_1 \times E_2,F) &\cong \SLM(E_1,\SLM(E_2,F)).
  \end{align*}
\end{theorem}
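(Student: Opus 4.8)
The plan is to upgrade the set-theoretic identities of Theorem~\ref{thm:Bexp} to bornological isomorphisms by verifying that the natural bijection $f \mapsto f^\vee$ and its inverse $g \mapsto g^\wedge$ both carry bounded sets to bounded sets. As the remark preceding the statement warns, we cannot deduce this from the exponential law together with boundedness of the evaluation maps, because the constituent linear maps do not lie in $\cA$; so boundedness must be checked by hand using the explicit seminorm/boundedness descriptions. The key reduction is exactly the one used in the proof of Theorem~\ref{thm:Bexp}: testing with $\cA(i_B,\ell)$ on the target side and with the inclusions $i_{B_i}$ on the source side reduces everything to Banach spaces $E_1,E_2$ and $F=\R$, since by the convenient structure of Subsection~\ref{ssec:convenientstructure} the bornology of $\cA(U,F)$ is initial with respect to these maps, and by Lemma~\ref{lem:Bdetect} the maps $\cA(i_{B_2},\ell)$ detect (indeed generate) the bornology on the inner space $\cA(U_2,F)$.

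Once reduced to Banach spaces and $F=\R$, the plan is to invoke the ``moreover'' clauses already built into the earlier lemmas. Lemma~\ref{AvsAb} asserts not only the equalities $\cA=\cAb$ but also that the bounded sets of both sides coincide; and the uniform boundedness of set-valued mappings in Subsection~\ref{thm:uniformboundedness} is stated for families. Concretely, a set $\cF$ is bounded in $\cA(U_1\times U_2,\R)$ precisely when the union $\bigcup_{f\in\cF}\set(f)$ is bounded for every relevant $\set\in\fS_\cA$ (or $\fS_\cA^R$ in the Roumieu case). I would therefore run the chain of equivalences from the proof of Theorem~\ref{thm:Bexp} with a family $\cF$ in place of a single $f$: in each case ($\cB,\cS$, Beurling, Roumieu) the inequalities \eqref{eq:SM3}--\eqref{eq:SM4} relating the $E_1\times E_2$ suprema to the separated $E_1,E_2$ suprema are uniform in $f$, so they show directly that $\{f:f\in\cF\}$ is bounded iff $\{f^\vee:f\in\cF\}$ is bounded, which is exactly the statement that $f\mapsto f^\vee$ is a bornological isomorphism.

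The one point requiring care, and the main obstacle, is the Roumieu case, where the existential quantifier on $\rh$ interacts with uniformity over the family. For a single function the proof of direction $(\Rightarrow)$ produces, for each $(r_k)\in\sR$, a bound; for a family I need these bounds uniform, which is where the projective description of Subsection~\ref{sec:proj} and the $\sR$/$\sR'$ lemma must be applied to the union $\bigcup_{f\in\cF}\set(f)$ rather than to each $\set(f)$ separately. The Baire-category argument in Lemma~\ref{AvsAb} is already formulated for families (its final paragraph replaces $\set(\ell\o f)$ by $\bigcup_{f\in\cF}\set(\ell\o f)$ in the definition of $A_{\set,C}$), so the functional-analytic input is in place; what remains is to check that the compact regularity of the inductive limits $\varinjlim_\rh \cS^M_{L,\rh}(E_2,\R)$ lets a family bounded in the limit be bounded, uniformly, in a single step $\cS^M_{L,\rh_2}(E_2,\R)$. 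Granting compact regularity (Lemma~\ref{ssec:Banach}), a bounded subset of the limit is contained and bounded in one step, and the estimate \eqref{eq:SM3} then transfers the bound with the same $\rh$ for the whole family.

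Finally, for the reverse map $g\mapsto g^\wedge$ the same inequalities read in the opposite direction (direction $(\Leftarrow)$ of Theorem~\ref{thm:Bexp}, using \eqref{eq:pd}, \eqref{eq:n}, and \eqref{eq:abelem}) show that boundedness of $\{g\}$ in $\cA(U_1,\cA(U_2,F))$ yields boundedness of $\{g^\wedge\}$ in $\cA(U_1\times U_2,F)$, again uniformly because all constants ($\ta$, the factors $2^{-k}$ and $(k+1)$) are independent of the individual function. Since $f\mapsto f^\vee$ and $g\mapsto g^\wedge$ are mutually inverse bijections that are bounded in both directions, they constitute the asserted bornological isomorphisms, completing the proof.
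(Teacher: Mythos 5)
Your proposal is correct in substance, but it takes a genuinely different route from the paper. The paper's own proof is a short, soft argument: since both $f\mapsto f^\vee$ and $g\mapsto g^\wedge$ are \emph{linear} maps into spaces of the form $\cA(U,G)$, the uniform boundedness principle (Theorem \ref{thm:uniformboundedness}, which rests on the closed graph theorem for Fr\'echet resp.\ webbed (LB)-spaces) reduces their boundedness to pointwise boundedness; one then only has to check that $f\mapsto f^\vee(x)$ is bounded into $\cA(U_2,F)$ for each fixed $x\in U_1$ (an easy estimate after reduction to Banach spaces, using that bounded sets lie in a single step of the inductive limit in the Roumieu cases), and that $g\mapsto g^\wedge(x,y)=(\ev_y\o\ev_x)(g)$ is a composite of bounded evaluation maps. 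You instead avoid the uniform boundedness principle entirely and verify by hand that bounded sets correspond, rerunning the estimates of Theorem \ref{thm:Bexp} uniformly over families; this does work, because the inequalities \eqref{eq:SM2}--\eqref{eq:SM4}, \eqref{eq:pd}, \eqref{eq:n}, \eqref{eq:abelem} hold verbatim with suprema over a family inserted, the ``moreover'' clause of Lemma \ref{AvsAb} is already a family statement, and compact regularity puts a bounded family into a single step with a single $\rh$. The one point you pass over too quickly is the reduction to Banach $E_i$ and $F=\R$ for \emph{families}: the bornology of $\cA(U_1,\cA(U_2,F))$ is initial with respect to $\cA(i_{B_1},m)$ for \emph{all} $m\in\cA(U_2,F)^*$, so testing only with functionals factoring through the maps $\cA(i_{B_2},\ell)$ requires a family version of Lemma \ref{lem:Bdetect}, which the paper states for membership only; it does hold, by repeating its proof with the projective description of Subsection \ref{sec:proj} and Lemma \ref{AvsAb} applied to unions over the family (alternatively, by Hahn--Banach and the fact that continuous functionals on $\prod_{B_2,\ell}\cA((U_2)_{B_2},\R)$ factor through finitely many factors), so this is a routine supplement rather than a genuine gap. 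As for what each approach buys: the paper's argument is far shorter but leans on the webbedness/completeness machinery behind the closed graph theorem; yours is computational but self-contained at the level of estimates, yields explicitly that a bounded family stays in one step with controlled $\rh$'s, and is essentially the argument one needs anyway for the topological refinement in Remark \ref{rem-7.3}. (Minor slip: the notion of boundedness of a family via set-valued maps is introduced in the subsection on related classes defined by boundedness conditions, not in Theorem \ref{thm:uniformboundedness}, which is the uniform boundedness principle that your proof in fact never uses.)
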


\begin{proof}
  This is a consequence of the uniform boundedness principle, Theorem \ref{thm:uniformboundedness}.  

  First we check that 
  the mapping
  \begin{equation} \label{eq:Biso4}
    \cA(U_1 \times U_2,F) \ni f \mapsto  f^\vee(x) \in \cA(U_2,F)
  \end{equation}
  is bounded for each $x \in U_1$. By definition we may suppose that $E_i$ and $F$ are Banach spaces, in fact:
  \[
    \xymatrix{
    \cA(U_1 \times U_2,F) \ar[rr] \ar[d]_{\cA(i_{B_1} \x i _{B_2},\ell)} && \cA(U_2,F) \ar[d]^{\cA(i _{B_2},\ell)} \\
    \cA((U_1)_{B_1} \times (U_2)_{B_2},\R) \ar@{-->}[rr] && \cA((U_2)_{B_2},\R)
    }
  \]
  Then boundedness of \eqref{eq:Biso4} is easily shown. 
  In the Roumieu cases $\cA \in \{\BrM,\SrLM\}$ we use the fact that any bounded 
  subset in $\cA(U_1 \times U_2,F)$ is contained and bounded in some step of the inductive limit describing $\cA(U_1 \times U_2,F)$ 
  and hence its image under \eqref{eq:Biso4} is contained and bounded in the corresponding step of the inductive limit 
  describing $\cA(U_2,F)$.

  Conversely, we need to show that
  \begin{equation} \label{eq:Biso5}
    \cA(U_1, \cA(U_2,F)) \ni g \mapsto  g^\wedge(x,y) \in F
  \end{equation}
  is bounded for all $(x,y) \in U_1 \times U_2$. But the mapping \eqref{eq:Biso5} is just the composite $\ev_y \o \ev_x$ and 
  thus bounded.

  Indeed, for any convenient vector spaces $E$, $F$, and $c^\infty$-open $U \subseteq E$, and each $x \in U$ 
  the evaluation mapping $\ev_x : \cA(U,F) \to F$ is bounded, since $\ell \o \ev_x$ is continuous for all $\ell \in F^*$,  
  by Subsection \ref{ssec:convenientstructure}. Alternatively, the $C^\infty$ exponential law yields 
  boundedness of $\ev : \cA(U,F) \times U \to F$ as follows: the mapping associated via the exponential law is 
  the inclusion $\cA(U,F) \to C^\infty(U,F)$ which obviously is smooth.
\end{proof}

\begin{remark}\label{rem-7.3}
	If $E_i$, $F$ are Banach spaces and $\cA \in \{\cB,\cS,\BbM,\SbLM\}$ then we even get topological 
	isomorphisms
	\begin{equation} \label{eq:Biso1}
    \cA(U_1 \times U_2,F) \ni  f \mapsto f^\vee \in  \cA(U_1,\cA(U_2,F))  
  	\end{equation}	
  	provided that we equip $\cA(U_1,\cA(U_2,F))$ ($= \cA_b(U_1,\cA_b(U_2,F))$ by Lemma \ref{AvsAb}) with the 
  	Fr\'echet topology generated by the basis of neighborhoods of zero
  	\begin{align} \label{eq:Biso2}
    	\{g : \set(g)  \subseteq \cV_\ell\}    
  	\end{align} 
  	where $\set \in \fS_\cA$ 
  	and $\{\cV_\ell\}$ is a basis of neighborhoods of zero in $\cA(U_2,F)$. 
  	It is easy to see that the mapping \eqref{eq:Biso1} is then continuous, and thus the statement follows from 
  	the open mapping theorem. 
\end{remark}

\subsection{The exponential law for smooth functions with compact support}  \label{ssec:Dexp}

	For locally convex spaces $F$ let
	\begin{align*}
		\cD(\R^\ell,F)&:=\varinjlim_{K\in\sK(\R^\ell)} C^\oo_K(\R^\ell,F),\\
		\intertext{where}
		C^\oo_K(\R^\ell,F) &:=\{f\in C^\oo(\R^\ell,F):f(x)=0\;\forall x\notin K\},
	\end{align*}
	supplied with the locally convex injective limit topology for the former
	and the subspace topology induced from the topology of uniform convergence
	in each derivative separately on $C^\oo(\R^\ell,F)$ for the latter space.
  Note that on $C^\oo_K(\R^\ell,F)$ this coincides with the 
  topology induced from $\cBb(\R^\ell,F)$ mentioned in the Remark
  \ref{rem-7.3}. Thus a subset $\mathcal F\subs C^\oo_K(\R^\ell,F)$
  is bounded therein if and only if for each multi-index $\al$
  the set $\{f^{(\al)}(x) : x \in \R^\ell,\; f \in \cF\}$ is bounded in $F$.
  By Lemma \ref{AvsAb}, this is in turn equivalent to the boundedness in $\cB(\R^\ell,F)$.
  The injective limit is a strict inductive limit hence regular
	since $C^\oo_{K'}(\R^\ell,F)$ is a closed topological subspace of $C^\oo_K(\R^\ell,F)$
	for every $K'\subs K$.

	If, in addition, $M=(M_k)$ is a weight sequence, then let
	\begin{align*}
		\DM(\R^\ell,F)&:=\varinjlim_{K\in\sK(\R^\ell)} \CM_K(\R^\ell,F),\\
		\intertext{where}
		\CM_K(\R^\ell,F) &:=\{f\in \CM(\R^\ell,F):f(x)=0\;\forall x\notin K\},
	\end{align*}
	supplied with the locally convex injective limit topology for the former space
	and the subspace topology induced from $\BM(\R^\ell,F)$ for the latter space.
	Again the injective limit is a strict inductive limit hence regular
	since $\CM_{K'}(\R^\ell,F)$ is a closed topological subspace of $\CM_K(\R^\ell,F)$
	for every $K'\subs K$.

\begin{theorem} \label{thm:Dexp}
	Let $M=(M_k)$ be a weakly log-convex weight sequences with moderate growth.
  We have bornological isomorphisms
	\begin{align*}
		\cD(\R^\ell\x \R^m,\R^n)&\cong \cD(\R^\ell,\cD(\R^m,\R^n))\\
		\DM(\R^\ell\x \R^m,\R^n)&\cong \DM(\R^\ell,\DM(\R^m,\R^n))
	\end{align*}
\end{theorem}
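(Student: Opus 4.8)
The strategy is to reduce the compactly supported exponential law to the already-established global one (Theorem~\ref{thm:Bexp} and its bornological refinement Theorem~\ref{thm-7.2}) by carefully tracking how compact supports in the product $\R^\ell\times\R^m$ correspond to compact supports on each factor. Since both sides are strict regular inductive limits, a bornological isomorphism amounts to matching the bounded sets, and the key is that a subset is bounded in such a limit precisely when it lies and is bounded in a single step. First I would fix the combinatorial observation that for a compact $K\subseteq\R^\ell\times\R^m$ one has $K\subseteq K_1\times K_2$ with $K_i$ compact, and conversely products $K_1\times K_2$ are cofinal among all compacta; this lets me replace $\sK(\R^\ell\times\R^m)$ by the directed set of such products in the inductive limit.

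**Main steps.** I would argue as follows, treating $\cA\in\{\cB,\BM\}$ (with $\cD,\DM$ the corresponding compactly supported classes) uniformly.
\begin{enumerate}
\item Identify $C^\oo_{K_1\times K_2}(\R^\ell\times\R^m,\R^n)$ with $C^\oo_{K_1}(\R^\ell,C^\oo_{K_2}(\R^m,\R^n))$ under $f\mapsto f^\vee$. The support condition transfers correctly: $f$ vanishes off $K_1\times K_2$ iff for each $x_1\in K_1$ the slice $f^\vee(x_1)$ is supported in $K_2$ \emph{and} $f^\vee(x_1)=0$ for $x_1\notin K_1$. This is where I expect the only real subtlety.
\item Observe that on each such step the subspace topology (resp.\ bornology) induced from $\cA(\R^\ell\times\R^m,\R^n)$, which by the Remark preceding \ref{rem-7.3} is the $\cBb$/$\BbM$-topology, matches under the global exponential law the step topology on the iterated side. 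For the bornology this uses Theorem~\ref{thm-7.2}: a set is bounded in $\cA(\R^\ell\times\R^m,\R^n)$ iff its image under $f\mapsto f^\vee$ is bounded in $\cA(\R^\ell,\cA(\R^m,\R^n))$, and restricting to the closed subspaces cut out by the support conditions preserves this.
\item Pass to the inductive limits. Using strictness and regularity (stated in Subsection~\ref{ssec:Dexp}), a set $\cF$ is bounded in $\cD(\R^\ell\times\R^m,\R^n)$ iff it lies and is bounded in some $C^\oo_{K_1\times K_2}$, hence by step~2 iff $\cF^\vee$ lies and is bounded in $C^\oo_{K_1}(\R^\ell,C^\oo_{K_2}(\R^m,\R^n))$. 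The latter must then be matched with boundedness in $\cD(\R^\ell,\cD(\R^m,\R^n))$.
\end{enumerate}

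**The obstacle.** The hard part will be step~3, specifically verifying that boundedness in the \emph{inner} compactly supported limit $\cD(\R^m,\R^n)$ is correctly detected. A bounded set $\cG\subseteq\cD(\R^\ell,\cD(\R^m,\R^n))$ lies in a step $C^\oo_{K_1}(\R^\ell,\cD(\R^m,\R^n))$, but a priori the values $g(x_1)$ for $x_1\in K_1$ could use supports in $\R^m$ that are not contained in a single fixed compact $K_2$. I would resolve this by a uniform support argument: since $\cG$ is bounded and $K_1$ is compact, the evaluations $\{g(x_1): x_1\in K_1,\ g\in\cG\}$ form a bounded subset of $\cD(\R^m,\R^n)$, which by regularity of the inner strict inductive limit must lie in a single step $C^\oo_{K_2}(\R^m,\R^n)$. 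This furnishes the common $K_2$ needed to land in one product step $C^\oo_{K_1\times K_2}$, closing the loop. The $\DM$ case is identical, invoking the $\BM$ clause of Theorem~\ref{thm-7.2} and the strictness/regularity of the $\DM$ limits; the weak log-convexity and moderate growth of $M$ enter only through the global Theorem~\ref{thm-7.2} and are not needed afresh.
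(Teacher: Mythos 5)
Your strategy coincides with the paper's: both proofs characterize bounded sets via regularity of the strict inductive limits, produce a single compact $K_2\subseteq\R^m$ controlling all supports (your evaluation-plus-inner-regularity argument is exactly the paper's reduction to $\al=0$, resting on the fact that differentiation in $x$ does not enlarge the support in $y$), and then reduce everything to the bornological $\cB$/$\BM$ exponential law, Theorem~\ref{thm-7.2}. For the $\cD$ case your outline assembles precisely the ingredients the paper records in Subsection~\ref{ssec:Dexp} and is correct.

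The gap is in the sentence ``The $\DM$ case is identical.'' The steps of the iterated limit are spaces of mappings into $\DM(\R^m,\R^n)$, while Theorem~\ref{thm-7.2} concerns mappings into $\BM(\R^m,\R^n)$, and boundedness in $\BM(\R^\ell,F)$ is defined by testing with functionals $\ell\in F^*$. In the direction where you must conclude that $\cF^\vee$ is bounded in $\DM(\R^\ell,\DM(\R^m,\R^n))$, Theorem~\ref{thm-7.2} only gives you that $\{\ell\o f^\vee: f\in\cF\}$ is bounded in $\BM(\R^\ell,\R)$ for functionals $\ell$ pulled back from $\BM(\R^m,\R^n)^*$, whereas you need this for all $\ell\in\DM(\R^m,\R^n)^*$ --- a genuinely larger dual (for instance $g\mapsto\sum_{k\in\Z^m}g(k)$ is continuous on $\cD(\R^m)$ but does not extend to $\cB(\R^m)$). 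In the smooth case this passage is mediated by the unconditional part of Lemma~\ref{AvsAb}: boundedness equals boundedness of the derivative sets, and these, having supports in $K_2$, can be measured equally well in $C^\oo_{K_2}(\R^m,\R^n)$, in $\cD(\R^m,\R^n)$, or in $\cB(\R^m,\R^n)$. In the Roumieu case the corresponding equivalence is the \emph{conditional} part of Lemma~\ref{AvsAb}, which requires a Baire vector space topology on the dual of the target; the paper verifies this hypothesis by observing that $\DrM(\R^m,\R^n)$ is a Silva space, so that its dual is Fr\'echet, hence Baire. Your proposal invokes neither this nor any substitute (e.g.\ Hahn--Banach extension of functionals from the closed step $\CM_{K_2}(\R^m,\R^n)$, legitimate by strictness of the inner limit), so the Roumieu half of the $\DM$ statement remains unproved; and, contrary to your closing remark, this is an input genuinely beyond Theorem~\ref{thm-7.2}.
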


\begin{demo}{Proof}
	Let us first consider the case $\cD$.
	The bounded subsets $\mathcal F\subs\cD(\R^\ell,F)$ in this regular inductive limit are
	exactly those sets for which there exists a compact subset $K\subs\R^\ell$
	such that $\mathcal F$ is contained and bounded in $C^\oo_K(\R^\ell,F)\subs \cB(\R^\ell,F)$.

	Thus a subset $\mathcal F\subs\cD(\R^\ell,\cD(\R^m,\R^n))$ is bounded if and only if 
	there exists a compact $K\subs \R^\ell$
	such that $f(x)=0$ for all $f\in\mathcal F$ and all $x\notin K$
	and $\mathcal F$ is bounded in $\cB(\R^\ell,\cD(\R^m,\R^n))$. Boundedness in 
	$\cB(\R^\ell,\cD(\R^m,\R^n))$ means, that for every
	multi-index $\al$ there exists a compact set $K^\al\subs\R^m$
	such that $\{f^{(\al)}(x):x\in\R^\ell,f\in\mathcal F\}$ is contained in
	$C^\oo_{K^\al}(\R^m,\R^n)$ and is bounded in $\cB(\R^m,\R^n)$.
	Since $f^{(\al)}(x)(y)=0$ provided $f(x)(y)=0$ for all $x$, it is enough to consider 
	$\al=0$. Thus the boundedness of $\mathcal F$ is equivalent to the existence
	of the compact set $K\x K^0$ such that $f^\wedge(x,y)=0$ 
	for all $(x,y)\notin K\x K^0$ and the boundedness in $\cB(\R^\ell,\cB(\R^m,\R^n))$.
	By Theorem \ref{thm-7.2} this is equivalent
	to the boundedness of $\{f^\wedge:f\in\mathcal F\}$ in $\cB(\R^\ell\x\R^m,\R^n)$
	and thus to that of $\{f^\wedge:f\in\mathcal F\}$ in $\cD(\R^\ell\x \R^m,\R^n)$.

	We consider now the case $\DM$:
	A subset $\mathcal F\subs\DM(\R^\ell,F)$ is bounded if and only if 
  there exists a compact $K\subs \R^\ell$
  such that $f(x)=0$ for all $f\in\mathcal F$ and all $x\notin K$
  and $\mathcal F$ is bounded in $\BM(\R^\ell,F)$.
	For $F=\mathbb R^n$ or $F=\DM(\mathbb R^m,\mathbb R^n)$,  by Lemma \ref{AvsAb}, the set 
  $\mathcal F$ is bounded in $\BM(\R^\ell,F)$ if and only it is bounded in $\BMb(\R^\ell,F)$;
	here we use that $\DrM(\R^m,\R^n)$ is a Silva space 
  and hence satisfies the assumptions of Lemma \ref{AvsAb}. 
  Boundedness of $\cF$ in $\BM(\R^\ell,\DM(\R^m,\R^n))$ means boundedness of  
	\[
    \set_{\rh} := \Big\{ \frac{f^{(\al)}(x)}{\rh^{|\al|} |\al|! M_{|\al|}} : \al \in \N^\ell,\; x \in \R^\ell,\; f \in \cF\Big\}
  \]
  in $\DM(\R^m,\R^n)$ for some $\rh>0$ if $[~]=\{~\}$, or for all $\rh>0$ if $[~]=(~)$.
	So there exists a compact subset $K' = K'_\rh\subseteq \R^m$ such that 
  $\set_\rh$ is contained in $C^{[M]}_{K'}(\R^m,\R^n)$ and is bounded in $\BM(\R^m,\R^n)$.
  Thus $\cF$ is bounded in $\DM(\R^\ell,\DM(\R^m,\R^n))$ if and only if there exists a 
	compact set $K\x K'$ such that $f^\wedge(x,y)=0$ for all $f\in\cF$ and
  for all $(x,y)\notin K\x K'$, and $\cF$ is bounded in $\BM(\R^\ell,\BM(\R^m,\R^n))$.  
  By Theorem \ref{thm-7.2} the latter is equivalent
  to the boundedness of $\{f^\wedge:f\in\mathcal F\}$ in $\BM(\R^\ell\x\R^m,\R^n)$.
	Thus $\cF$ is bounded in $\DM(\R^\ell,\DM(\R^m,\R^n))$	if and only if 
  $\{f^\wedge:f\in\mathcal F\}$ is bounded  in $\DM(\R^\ell\x \R^m,\R^n)$.
\qed\end{demo}

%-----------------------------------------------------------------------------------------------------------------------
\section{The \texorpdfstring{$\Sp$}{Sp} and \texorpdfstring{$\SpM$}{SpM} exponential law} \label{sec:Wexp}
%-----------------------------------------------------------------------------------------------------------------------

In this section we prove the exponential law for $\Sp$ and for $\SpM$. %We need a few preparatory lemmas.

%-----------------------------------------------------------------------------------------------------------------------
\subsection{In finite dimensions}
%-----------------------------------------------------------------------------------------------------------------------

%-----------------------------------------------------------------------------------------------------------------------
\paragraph{\bf Smooth functions with globally $p$-integrable derivatives}
%-----------------------------------------------------------------------------------------------------------------------

For $p \in [1,\infty]$ consider  
\begin{align*}
  \Sp(\R^m,\R) &= \Sp(\R^m) = \bigcap_{k \in \N} W^{k,p}(\R^m) \\ 
    &= \Big\{f \in C^\infty(\R^m) : \|f^{(\al)}\|_{L^p(\R^m)} < \infty \text{ for all } \al \in \N^m\Big\}  
\end{align*}
with its natural Fr\'echet topology, and set
\[
  \Sp(\R^m,\R^n) := (\Sp(\R^m,\R))^n.  
\]
These classes where denoted by $\cD_{L^p}$ in \cite[p.~199]{Schwartz66}.
The most important case is
$W^{\infty,2}(\R^m) = \Hoo(\R^m)$.
Note that $W^{\infty,\infty}(\R^m) =\cB(\R^m)$, so henceforth we restrict ourselves to the case $p \in [1,\infty)$.

%-----------------------------------------------------------------------------------------------------------------------
\paragraph{\bf Sobolev--Denjoy--Carleman classes}
%-----------------------------------------------------------------------------------------------------------------------

Let $M=(M_k)$ be a weight sequence, let $p \in[1,\infty)$, and let $\rh>0$.
Consider the Banach space 
\[
  W^{M,p}_{\rh}(\R^m) := \big\{f \in C^\infty(\R^m) : \|f\|_{\R^m,\rh}^{M,p} < \infty\big\},
\]
where
\[
  \|f\|_{\R^m,\rh}^{M,p} := \sup_{\al \in \N^m} \frac{\|f^{(\al)}\|_{L^p(\R^m)}}{\rh^{|\al|}\, |\al|!\, M_{|\al|}}.
\]
We define the Fr\'echet space 
\[
  \SpbM(\R^m) := \proj_{n \in \N} W^{M,p}_{\frac{1}{n}}(\R^m) 
\]
and  
\[
  \SprM(\R^m) := \ind_{n \in \N} W^{M,p}_{n}(\R^m)
\]
which is a compactly regular (LB)-space and thus {($c^\oo$-)}complete, webbed, and (ultra-)bornological, 
see \cite[Lemma 4.9]{KrieglMichorRainer14a}, and set
\[
   \SpM(\R^m,\R^n) := (\SpM(\R^m))^n.
\]

\begin{proposition}[{\cite[Prop.~5.1]{KrieglMichorRainer14a}}] \label{prop:incl2}
  Let $M=(M_k)$ and $L=(L_k)$ be weight sequences, where $L_k\ge 1$ for all $k$.
  We have the following inclusions, where we omit the source $\R^m$ and the target $\R^n$, i.e., we write $\cA$ instead of $\cA(\R^m,\R^n)$.  
  Let $1 \le p<q <\infty$.
  For the inclusions marked by $*$ we assume that $M=(M_k)$ is derivation closed. 
    \[
      \xymatrix{
        \cD~ \ar@{{ >}->}[r] & \cS~ \ar@{{ >}->}[r] & \Sp~ \ar@{{ >}->}[r] & W^{\infty,q}~ \ar@{{ >}->}[r] 
        & \cB~ \ar@{{ >}->}[r] & C^\infty \\
        \DrM~ \ar@{{ >}->}[r] \ar@{{ >}->}[u] & \SrLM~ \ar@{{ >}->}[r] \ar@{{ >}->}[u] & \SprM~ \ar@{{ >}->}[r]^{*} \ar@{{ >}->}[u] 
        & W^{\rM,q}~ \ar@{{ >}->}[r]^{*} \ar@{{ >}->}[u] 
        & \BrM~ \ar@{{ >}->}[u] \ar@{{ >}->}[r] & \CrM~ \ar@{{ >}->}[u]\\
        \DbM~ \ar@{{ >}->}[r] \ar@{{ >}->}[u] &\SbLM~ \ar@{{ >}->}[r] \ar@{{ >}->}[u] & \SpbM~ \ar@{{ >}->}[r]^{*} \ar@{{ >}->}[u] 
        & W^{\bM,q}~ \ar@{{ >}->}[r]^{*} \ar@{{ >}->}[u] 
        & \BbM~ \ar@{{ >}->}[u] \ar@{{ >}->}[r] & \CbM~ \ar@{{ >}->}[u]
      }
    \]
   All inclusions are continuous. 
   If the target is $\R$ (or $\C$) then all spaces are algebras, provided that $M=(M_k)$ is weakly log-convex,
   and each space in 
   \[
      \xymatrix{
        \cD(\R^m)~ \ar@{{ >}->}[r] & \cS(\R^m)~ \ar@{{ >}->}[r] & \Sp(\R^m)~ \ar@{{ >}->}[r] & W^{\infty,q}(\R^m)~ \ar@{{ >}->}[r] 
        & \cB(\R^m) 
      }
    \]
    is a $\cB(\R^m)$-module, and thus an ideal in each space on its right, likewise 
    each space in 
   \[
      \xymatrix{
        \DM(\R^m) \ar@{{ >}->}[r] & \SLM(\R^m) \ar@{{ >}->}[r] & \SpM(\R^m) \ar@{{ >}->}[r] & W^{[M],q}(\R^m) \ar@{{ >}->}[r] 
        & \BM(\R^m) 
      }
    \]
    is a $\BM(\R^m)$-module, and thus an ideal in each space on its right.  
\end{proposition}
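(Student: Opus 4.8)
The plan is to reduce everything to scalar-valued functions and then verify the arrows of the diagram one at a time by comparing the defining families of seminorms. Since $\cA(\R^m,\R^n)=\cA(\R^m,\R)^n$ for every class appearing here, it suffices to treat the target $\R$. Two mechanisms will do the work: for the horizontal passages that change the order of integrability I combine the Sobolev embedding $W^{s,p}(\R^m)\hookrightarrow L^\infty(\R^m)$ (valid in finite dimensions once $s>m/p$) with the elementary interpolation $\|g\|_{L^q}\le\|g\|_{L^\infty}^{1-p/q}\,\|g\|_{L^p}^{p/q}$ for $p\le q$; all remaining arrows follow from direct estimates on the seminorms, and continuity is read off from the explicit bounds produced along the way.

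For the inclusions marked by $*$, namely $\Sp\hookrightarrow W^{\infty,q}$ and $W^{\infty,q}\hookrightarrow\cB$ together with their Denjoy--Carleman analogues $\SpM\hookrightarrow W^{[M],q}$ and $W^{[M],q}\hookrightarrow\BM$, I first bound each derivative in the sup norm. Applying the Sobolev embedding to $g=f^{(\al)}$ gives $\|f^{(\al)}\|_{L^\infty}\le C\sum_{|\be|\le s}\|f^{(\al+\be)}\|_{L^p}$ with $s=\lfloor m/p\rfloor+1$; in the smooth case this already yields $f\in\cB$, and one interpolation step gives $f\in W^{\infty,q}$. In the classes $\SpM$ and $W^{[M],q}$ the extra $s$ derivatives produce factors $(|\al|+|\be|)!\,M_{|\al|+|\be|}$, which the derivation-closed hypothesis controls through \eqref{eq:dc1}: for $|\be|\le s$ one has $(|\al|+|\be|)!\,M_{|\al|+|\be|}\le C^{s(|\al|+s)}\,|\al|!\,M_{|\al|}$, and the exponential factor $(C^{s})^{|\al|}$ is absorbed by enlarging $\rh$ in the Roumieu case while being harmless in the Beurling case. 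The unmarked inclusions are softer: $\cD\hookrightarrow\cS$ holds because a compactly supported function satisfies every decay seminorm trivially; $\cS\hookrightarrow\Sp$ and $\SLM\hookrightarrow\SpM$ follow by raising the rapid decay to the power $p$ and fixing a single exponent $k$ with $kp>m$, so that $(1+\|x\|)^{-kp}$ is integrable; the vertical arrows merely discard part of the defining data; and each Beurling space embeds in its Roumieu companion because the Beurling condition (for all $\rh$) is stronger than the Roumieu one (for some $\rh$).

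For the algebraic claims I use the Leibniz rule $(fg)^{(\al)}=\sum_{\be\le\al}\binom{\al}{\be}f^{(\be)}g^{(\al-\be)}$. To show that $\SpM$ is a $\BM$-module (and, as the special case $f,g\in\BM$, that $\BM$ is an algebra, with the parallel statements for the smooth spaces and $\cB$), I estimate each summand by placing the $L^\infty$ norm on the $\BM$-factor and the $L^p$ norm on the $\SpM$-factor: $\|f^{(\be)}g^{(\al-\be)}\|_{L^p}\le\|f^{(\be)}\|_{L^\infty}\,\|g^{(\al-\be)}\|_{L^p}$. Weak log-convexity, in the form $|\be|!\,M_{|\be|}\,(|\al|-|\be|)!\,M_{|\al|-|\be|}\le|\al|!\,M_{|\al|}$, together with $\sum_{\be\le\al}\binom{\al}{\be}=2^{|\al|}$, collapses the Leibniz sum into a bound of the shape $(2\rh)^{|\al|}|\al|!\,M_{|\al|}$ times the product of the two defining norms, which is exactly the module estimate. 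For the Schwartz and Gelfand--Shilov spaces the only new point is the polynomial weight $(1+\|x\|)^k$, which I assign entirely to the rapidly decreasing factor while using boundedness of the derivatives of the $\cB$- or $\BM$-factor. Since every space in each row is contained in $\cB(\R^m)$ (respectively $\BM(\R^m)$), the module property immediately gives that each space is an ideal in every space to its right.

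The main obstacle is the $*$-marked step in the Denjoy--Carleman setting: the Sobolev embedding forces one to control derivatives of order $|\al|+s$ by those of order $|\al|$, and it is precisely here that the derivation-closed condition is indispensable and must be applied uniformly, so that a single estimate simultaneously covers the Roumieu and Beurling cases. The remaining work, though lengthy, is routine bookkeeping with the seminorms and the combinatorial inequalities already recorded in Subsection~\ref{ssec:ws}.
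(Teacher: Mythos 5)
Your argument is correct, and there is nothing in the paper to compare it against: the paper does not prove this proposition but imports it verbatim from \cite[Prop.~5.1]{KrieglMichorRainer14a}. Your two mechanisms are exactly the expected ones (and the ones used in the cited reference): Sobolev embedding $W^{s,p}(\R^m)\hookrightarrow L^\infty(\R^m)$ plus the interpolation $\|g\|_{L^q}\le\|g\|_{L^\infty}^{1-p/q}\|g\|_{L^p}^{p/q}$ for the integrability-raising arrows, with \eqref{eq:dc1} converting the shift $|\al|\mapsto|\al|+s$ of derivative order into a geometric factor $(C^s)^{|\al|}$ that either quantifier ($\exists\rh$ or $\forall\rh$) absorbs; and Leibniz plus weak log-convexity, with the sup-norm placed on the $\cB$- resp.\ $\BM$-factor, for the module and algebra claims. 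Three small points of precision. First, the $*$ marks sit only on the Denjoy--Carleman arrows $\SpM\hookrightarrow W^{[M],q}\hookrightarrow\BM$; the smooth arrows $\Sp\hookrightarrow W^{\infty,q}\hookrightarrow\cB$ carry no extra hypothesis, which your treatment in fact respects even though your opening sentence lumps them together as ``marked by $*$''. Second, your derivation of the ideal property from the $\BM$-module property uses the inclusion of each space into $\BM$; for $\SpM$ and $W^{[M],q}$ that inclusion is itself a $*$-arrow, so the ideal (and algebra) statements for those two spaces inherit the derivation-closedness hypothesis---this is consistent with how the proposition is meant, but it deserves to be said explicitly. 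Third, the hypothesis $L_k\ge1$ is what makes $\DM\hookrightarrow\SLM$ ``trivial'': on a support of radius $R$ one needs $\sup_k (1+R)^k/(\si^k k!\,L_k)<\infty$, and it is the bound $1/L_k\le1$ that reduces this to the convergence of the exponential series. With these glosses filled in, the proof is complete.
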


\begin{remark*}
  The fact that 
  $\cD$ is dense in $\Sp$ (but not in $\cB$) and the Sobolev inequality imply that each element of $\Sp$ must tend to $0$ at infinity 
  together with all its iterated partial derivatives. 
\end{remark*}

\begin{lemma} \label{lem:H1}
  Let $f \in C^1(\R^{1+n}) \cap W^{1,p}(\R^{1+n})$ and let $x_0 \in \R$. 
  Then $f^\vee(x_0) = f(x_0, ~) \in C^1(\R^n) \cap L^p(\R^n)$ and  
  \begin{equation} \label{eq:vee}
    \|f^\vee(x_0)\|_{L^p(\R^n)} \le C \, \|f\|_{W^{1,p}(\R^{1+n})},
  \end{equation}
  for a universal constant $C$.
\end{lemma}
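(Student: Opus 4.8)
The statement is a one-dimensional trace (restriction) estimate: writing a point of $\R^{1+n}$ as $(t,y)$ with $t\in\R$ and $y\in\R^n$, the function $f^\vee(x_0)=f(x_0,\cdot)$ is the restriction of $f$ to the hyperplane $\{x_0\}\times\R^n$, and we must control its $L^p(\R^n)$-norm by the full $W^{1,p}$-norm. That $f^\vee(x_0)\in C^1(\R^n)$ is immediate, since $f\in C^1(\R^{1+n})$ restricts to a $C^1$ function on each slice; the content is thus the $L^p$-membership together with \eqref{eq:vee}. The plan is to bound $|f(x_0,y)|^p$ pointwise by an integral over a unit interval in the $t$-variable and then integrate in $y$.

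First I would fix $y$ and set $g(t):=|f(t,y)|^p$. By the fundamental theorem of calculus, for every $x$ in the unit interval $I:=[x_0,x_0+1]$ we have $g(x_0)=g(x)+\int_x^{x_0}g'(t)\,dt\le g(x)+\int_I|g'(t)|\,dt$. Averaging over $x\in I$ (which has length one) gives
\[
  |f(x_0,y)|^p \le \int_I |f(x,y)|^p\,dx + \int_I |g'(t)|\,dt.
\]
Since $g'(t)=p\,|f(t,y)|^{p-1}\,\on{sgn}(f(t,y))\,\p_1 f(t,y)$, Young's inequality applied with exponents $p/(p-1)$ and $p$ to the product $|f|^{p-1}\cdot|\p_1 f|$ yields the pointwise bound $|g'(t)|\le (p-1)\,|f(t,y)|^p + |\p_1 f(t,y)|^p$.

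Integrating the resulting estimate for $|f(x_0,y)|^p$ over $y\in\R^n$ and using Fubini (all integrands being nonnegative) leaves
\[
  \int_{\R^n} |f(x_0,y)|^p\,dy \le p\int_{I\times\R^n} |f|^p + \int_{I\times\R^n}|\p_1 f|^p \le p\,\|f\|_{L^p(\R^{1+n})}^p + \|\p_1 f\|_{L^p(\R^{1+n})}^p,
\]
which is finite because $f\in W^{1,p}$; in particular $f^\vee(x_0)\in L^p(\R^n)$. Bounding the right-hand side by $p\,\|f\|_{W^{1,p}(\R^{1+n})}^p$ and taking $p$-th roots gives \eqref{eq:vee} with $C=p^{1/p}$. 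Since $p^{1/p}\le e^{1/e}<2$ for all $p\in[1,\infty)$, the constant may in fact be chosen independent of $p$, which explains the word ``universal''.

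The only delicate point is the use of the fundamental theorem of calculus for $g(t)=|f(t,y)|^p$ and the formula for $g'$. For $p>1$ the map $s\mapsto|s|^p$ is $C^1$, so $g\in C^1$ and the computation is literal. For $p=1$, $s\mapsto|s|$ is merely Lipschitz, but then $g$ is absolutely continuous in $t$ with $|g'(t)|\le|\p_1 f(t,y)|$ almost everywhere, so the FTC for absolutely continuous functions still applies and the term $(p-1)|f|^p$ simply vanishes; the same bound results. Everything else---Fubini and the two elementary inequalities---is routine.
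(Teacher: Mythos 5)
Your proof is correct, and it reaches the estimate by a genuinely more elementary route than the paper's, even though both arguments share the same three ingredients: the fundamental theorem of calculus in the first variable applied to $|f|^p$, the a.e.\ identity $\p_x(|f|^p)=p\,|f|^{p-1}(\on{sgn} f)\,\p_x f$, and Young's inequality $p\,|f|^{p-1}|\p_x f|\le(p-1)|f|^p+|\p_x f|^p$. The paper proves the slice estimate by integrating $\p_x(\ps\,|f|^p)$ over the half-ball $B_+(x_0,r+1)$, where $\ps$ is a smooth radial cutoff equal to $1$ on $B(x_0,r)$ and vanishing outside $B(x_0,r+1)$; the slice $\{x=x_0\}$ appears as the boundary term of that integration, the extra term $(\p_x\ps)|f|^p$ is controlled by $\|\ph'\|_{L^\infty(\R)}$, and the claim follows upon letting $r\to\infty$. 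You dispense with the cutoff and the limiting procedure entirely: fixing $y$ and averaging the one-dimensional FTC identity over the unit interval $[x_0,x_0+1]$ converts the boundary value $|f(x_0,y)|^p$ into integrals over that interval, and Tonelli in $y$ finishes the proof. This buys a shorter argument with an explicit constant $p^{1/p}\le e^{1/e}<2$ (hence indeed universal), whereas the paper's computation keeps the estimate localized in half-balls before passing to the limit. Your separate handling of $p=1$ via absolute continuity of $t\mapsto|f(t,y)|$ is also sound and replaces the paper's citation of Ziemer and Lieb--Loss for the a.e.\ differentiation of $|f|^p$.
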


In particular, the set $\{f^\vee(x) : x \in \R\}$ is bounded in $L^p(\R^n)$.

\begin{proof}
  Choose a decreasing $C^\infty$-function $\ph : \R \to \R$ satisfying $\ph|_{\{x\le 0\}} = 1$ and $\ph|_{\{x\ge 1\}} = 0$. 
  Let $B(x_0,r) : = \{(x,y) \in \R^2 : (x-x_0)^2 + y_1^2 + \cdots +y_n^2 < r^2\}$ be the open ball of radius $r$ centered at $(x_0,0) \in \R^{1+n}$  
  and let $B_+(x_0,r) := B(x_0,r) \cap \{x>x_0\}$ be its right half. 
  We define 
  \[
    \ps(x,y) := \ph\Big(\sqrt{(x-x_0)^2 + y_1^2 + \cdots +y_n^2} - r\Big).
  \]
  Then $\ps= 1$ on $B(x_0,r)$ and $\ps = 0$ outside of $B(x_0,r+1)$. 
  Since $|f|^p$ is locally Lipschitz and since  
  $\p_x (|f|^p) = p\, |f|^{p-1} (\on{sgn} f) (\p_x f)$ a.e.\ (see, e.g., \cite[Thm~2.1.11]{Ziemer89} or \cite[Thm~6.17]{LiebLoss01}), the fundamental theorem of calculus implies
  \begin{align*}
    \int_{B(x_0,r)\cap \{x=x_0\}} |f|^p\, dy &\le \int_{\{x=x_0\}} \ps|f|^p\, dy  \\  
    &= - \int_{B_+(x_0,r+1)} \p_x \big(\ps|f|^p\big)\, d(x,y) \\ 
    &= - \int_{B_+(x_0,r+1)} (\p_x \ps) |f|^p + p\, \ps |f|^{p-1} (\on{sgn} f) (\p_x f)\, d(x,y) \\ 
    &\le \int_{B_+(x_0,r+1)} |\p_x \ps| |f|^p + p\, |\ps| |f|^{p-1} |\p_x f|\, d(x,y) \\
    &\le C \int_{B_+(x_0,r+1)} |f|^p + |\p_x f|^p\, d(x,y)
  \end{align*} 
  for some constant only depending on $\ph$,
  using $|\ps| \le 1$,
  \begin{align*}
    |\p_x \ps| &=  \frac{|x-x_0| |\ph'\big(\sqrt{(x-x_0)^2 + y_1^2 + \cdots +y_n^2} - r\big)|}{\sqrt{(x-x_0)^2 + y_1^2 + \cdots +y_n^2}} 
    \le \|\ph'\|_{L^\infty(\R)} 
  \end{align*}
  and $p |f|^{p-1} |\p_x f| \le (p-1) |f|^p + |\p_x f|^p$, by Young's inequality.  
  Letting $r \to +\infty$ implies the statement.  
\end{proof}

\begin{proposition} \label{prop:H2}
    If $f \in \Sp(\R^{\ell+n})$ and $x_0 \in \R^\ell$, then 
    $f^\vee(x_0) \in \Sp(\R^n)$ 
    and 
    \begin{equation} \label{eq:dvee}
    \|(f^\vee(x_0))^{(\al)}\|_{L^p(\R^n)} \le C \, \|f\|_{W^{|\al|+\ell,p}(\R^{\ell+n})},\quad \al \in \N^n,
    \end{equation}
    for a universal constant $C$.  
\end{proposition}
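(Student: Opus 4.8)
The plan is to reduce the multi-index estimate \eqref{eq:dvee} to an iterated application of the one-variable bound from Lemma~\ref{lem:H1}. The key observation is that Lemma~\ref{lem:H1} controls the restriction of a $W^{1,p}$-function on $\R^{1+n}$ to a hyperplane $\{x=x_0\}$ by the full $W^{1,p}$-norm. I would first record that $f^\vee(x_0) \in C^\infty(\R^n)$ with $(f^\vee(x_0))^{(\al)} = (\p_y^\al f)(x_0,\cdot)$, which is just the elementary fact that partial derivatives in the $y$-variables commute with restriction in the $x$-variable. So it suffices to bound $\|(\p_y^\al f)(x_0,\cdot)\|_{L^p(\R^n)}$ for each fixed multi-index $\al \in \N^n$.

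Next I would iterate Lemma~\ref{lem:H1} in the $\ell$ coordinates of $\R^\ell$. Writing $\R^{\ell+n} = \R \times \R^{(\ell-1)+n}$ and applying the lemma peels off one $\R$-factor at the cost of passing from a $W^{1,p}$-norm to an $L^p$-norm while raising the differentiability requirement by one derivative. More precisely, starting from $g := \p_y^\al f \in W^{\ell,p}(\R^{\ell+n})$ (which lies in $W^{\ell,p}$ because $f \in \Sp(\R^{\ell+n})$ and $|\al|$ is fixed), I would apply the one-variable estimate successively to reduce the number of $\R^\ell$-variables from $\ell$ down to $0$. Each step restricts to a hyperplane in one of the $\ell$ directions and trades one Sobolev order for the restriction, so after $\ell$ applications I land on the single point $x_0 \in \R^\ell$ and obtain
\begin{equation*}
  \|(f^\vee(x_0))^{(\al)}\|_{L^p(\R^n)} = \|(\p_y^\al f)(x_0,\cdot)\|_{L^p(\R^n)} \le C^\ell\, \|\p_y^\al f\|_{W^{\ell,p}(\R^{\ell+n})} \le C^\ell\, \|f\|_{W^{|\al|+\ell,p}(\R^{\ell+n})},
\end{equation*}
where the last inequality holds because every derivative appearing in $\|\p_y^\al f\|_{W^{\ell,p}}$ is of total order at most $|\al|+\ell$. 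Absorbing $C^\ell$ into a new universal constant (depending only on $\ell$ and $p$) gives \eqref{eq:dvee}, and summing over all $\al$ shows $f^\vee(x_0) \in \Sp(\R^n)$.

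The main technical obstacle is making the iteration rigorous: Lemma~\ref{lem:H1} as stated applies to $\R^{1+n}$ with the distinguished variable in the first slot, so at each stage I must verify that the intermediate restriction $(\p_y^\al f)(x_0^{(1)},\dots,x_0^{(j)},\cdot)$ is still $C^1$ in the next variable to be peeled off and lies in the relevant $W^{1,p}$-space of one fewer dimension. This is where the hypothesis $f \in \Sp$ does the work, guaranteeing enough integrable derivatives at every stage so that each restriction remains in the domain of the lemma; one should take care that the function being restricted at step $j$ is genuinely in $C^1 \cap W^{1,p}$ of the appropriate space, which follows from $f \in \bigcap_k W^{k,p}$ together with the fact that at each stage we have only used up finitely many derivatives. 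The remaining bookkeeping — tracking the Sobolev order and the accumulated constants — is routine.
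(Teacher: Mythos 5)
Your proposal is correct and follows essentially the same route as the paper: the paper's proof also applies Lemma~\ref{lem:H1} to $\p_y^\al f$ to settle the case $\ell=1$ and then inducts on $\ell$, which is precisely your coordinate-by-coordinate iteration. Your additional remarks on why each intermediate restriction stays in $C^1\cap W^{1,p}$ (by applying the lemma to higher derivatives of $f$, using $f\in\bigcap_k W^{k,p}$) just spell out the bookkeeping that the paper leaves implicit.
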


\begin{proof}
  For $\ell = 1$ this follows from applying Lemma \ref{lem:H1} to $\p_y^\al f(x,y)$. 
  The general statement follows by induction on $\ell$.
\end{proof}

%-----------------------------------------------------------------------------------------------------------------------
\subsection{Vector-valued functions of class $\Sp$ and $\SpM$}
%-----------------------------------------------------------------------------------------------------------------------

Let $M=(M_k)$ be a weight sequence.
For a locally convex space $F$ we define 
\begin{empheq}[box=\widefbox]{align*}
  \Sp(\R^m,F) &:= 
  \Big\{f\in C^\infty(\R^m,F):
  \forall \al ~\forall s: \|s \o f^{(\al)}\|_{L^p(\R^m)}< \infty\Big\},\\
  \SpbM(\R^m,F) &:= 
  \Big\{f\in C^\infty(\R^m,F):
  \forall s ~\forall \si:
  \sup_{\al \in \N^m} \frac{\|s \o f^{(\al)}\|_{L^p(\R^m)}}{\si^{|\al|} |\al|! M_{|\al|}} < \infty\Big\},\\
  \SprM(\R^m,F) &:= 
  \Big\{f\in C^\infty(\R^m,F):
  \forall s ~\exists \si:
  \sup_{\al \in \N^m} \frac{\|s \o f^{(\al)}\|_{L^p(\R^m)}}{\si^{|\al|} |\al|! M_{|\al|}} < \infty\Big\},
\end{empheq}
where $\al \in \N^m$, $s \in \SN(F)$, $\si>0$, and 
$\SN(F)$ is the collection of all continuous seminorms on $F$. 
%Then $\Sp(\R^m,F)$ and $\SpM(\R^m,F)$ are vector spaces, by the Minkowski inequality, 
%\begin{align*}
%  \|s(f^{(\al)} + \la g^{(\al)})\|_{L^p} \le \|s(f^{(\al)}) + |\la| s(g^{(\al)})\|_{L^p} \le 
%  \|s(f^{(\al)})\|_{L^p} + |\la| ||s(g^{(\al)})\|_{L^p}.
%\end{align*}

%WHICH TOPOLOGY? COMPLETE?
%Topologize when $F$ is specified

We shall need a projective description for $\SprM(\R^m,F)$.
For a $C^\infty$-mapping $f : \R^m \to F$ into a locally convex space $F$, a positive sequence $(r_k)$, $p \in [1,\infty)$, 
and $s\in \SN(F)$ consider 
\[
  \set^{M,p}_{s,(r_k)}(f) := \Big\{r_{|\al|} \frac{\|s \o f^{(\al)}\|_{L^p(\R^m)}}{|\al|!\, M_{|\al|}}  : \al\in \mathbb N^m\Big\},
\]
and define 
\[
  \fS^R_{\SprM} := \{\set^{M,p}_{s,(r_k)} : s\in \SN(F), (r_k) \in \sR\},
\]
where $\sR$ and $\sR'$ were defined in Subsection \ref{sec:proj}. 

\begin{lemma} \label{lem:AEW}
For a $C^\infty$-mapping $f : \R^m \to F$ into a locally convex space $F$
the following are equivalent.
\begin{enumerate}
\item $f$ is $\SprM$.
\item For each $\set \in \fS_{\SprM}^R$, 
the set $\set(f)$ is bounded in $\R$.
\item For $s \in \SN(F)$ and each sequence $(r_k) \in \sR'$ 
there exists 
$\de>0$ such that the set $\set^{M,p}_{s,(r_k\, \de^k)}(f)$
is bounded in $\R$. 
\end{enumerate}
\end{lemma}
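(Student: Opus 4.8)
The plan is to reduce the statement directly to the abstract sequence Lemma of Subsection~\ref{sec:proj}, applied with index set $\N^m$ and with one family of nonnegative reals per continuous seminorm. First I would fix $s \in \SN(F)$ and set
\[
  a_\al := \frac{\|s \o f^{(\al)}\|_{L^p(\R^m)}}{|\al|!\, M_{|\al|}} \ge 0, \qquad \al \in \N^m.
\]
With this notation the set-valued maps unwind to $\set^{M,p}_{s,(r_k)}(f) = \{r_{|\al|}\, a_\al : \al \in \N^m\}$ and, with the rescaled sequence $(r_k\,\de^k)$, to $\set^{M,p}_{s,(r_k\de^k)}(f) = \{\de^{|\al|}\, r_{|\al|}\, a_\al : \al \in \N^m\}$, while the boxed definition of $\SprM(\R^m,F)$ reads: $f$ is $\SprM$ if and only if for every $s$ there is $\si>0$ with $\sup_\al a_\al/\si^{|\al|} < \infty$.

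Next I would observe that each of the three conditions carries a common outer quantifier $\forall s \in \SN(F)$, so it suffices to prove the equivalence for a fixed $s$. For fixed $s$, the three conditions of the lemma become, respectively, $\exists \si>0 : \sup_\al a_\al/\si^{|\al|} < \infty$, then $\forall (r_k) \in \sR : \sup_\al r_{|\al|}\, a_\al < \infty$, and finally $\forall (r_k) \in \sR' ~\exists \de>0 : \sup_\al \de^{|\al|} r_{|\al|}\, a_\al < \infty$. These are verbatim statements \thetag{1}, \thetag{2}, \thetag{3} of Lemma~\ref{sec:proj} applied to the family $(a_\al)_{\al\in\N^m}$, which that lemma shows to be equivalent. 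Reinstating the quantifier over $s$ then yields the claim.

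The only point requiring care — bookkeeping rather than a genuine obstacle — is to note that $(a_\al)_{\al \in \N^m}$ is indexed by multi-indices exactly as required by Lemma~\ref{sec:proj}, which is stated for $a_\al \ge 0$ with $\al \in \N^m$. Hence no separate passage from the scalar index $k = |\al|$ to the multi-index $\al$ is needed; the abstract lemma already absorbs this internally through the reduction $b_k := \max_{|\al|=k} a_\al$. The argument is thus an immediate transcription, entirely parallel to the proof of the Proposition in Subsection~\ref{sec:proj}, with the supremum over $x \in E$ there replaced by the $L^p$-norm over $\R^m$ here.
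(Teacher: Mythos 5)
Your proposal is correct and is essentially identical to the paper's own proof: the paper likewise sets $a_{s,\al}:=\|s\o f^{(\al)}\|_{L^p(\R^m)}/(|\al|!\,M_{|\al|})$ and invokes Lemma~\ref{sec:proj} directly, the only implicit content being exactly the two points you spell out (the outer quantifier $\forall s\in\SN(F)$ is common to all three conditions, and the abstract lemma is already stated for multi-indices $\al\in\N^m$). No gaps.
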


\begin{proof}
  Set
  \begin{align*}
    a_{s,\al}:=\frac{\|s \o f^{(\al)}\|_{L^p(\R^m)}}{|\al|!\, M_{|\al|}!},
  \end{align*}
  and apply Lemma \ref{sec:proj}. 
\end{proof}

\begin{lemma} \label{lem:WR}
  We have $f \in \SprM(\R^\ell,\SprM(\R^m))$ if and only if  
  \begin{equation} \label{eq:veeR1}
    \exists \si, \ta>0 : 
    ~\sup_{\al \in \N^\ell,\be \in \N^m} 
    \frac{(\int_{\R^\ell} \|\p_2^{\be}[f^{(\al)}(x)]\|_{L^p(\R^m)}^p dx)^{\frac 1 p}}{\ta^{|\be|}\si^{|\al|}\, |\be|! |\al|!\, M_{|\be|} M_{|\al|}} < \infty.
  \end{equation}
\end{lemma}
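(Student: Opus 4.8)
The plan is to reduce both implications to the scalar projective description of Subsection~\ref{sec:proj} and to exploit the submultiplicativity built into $\sR'$. First I would abbreviate
\[
  b_{\al,\be} := \frac{1}{|\al|!\,|\be|!\,M_{|\al|}\,M_{|\be|}}\Big(\int_{\R^\ell}\|\p_2^\be[f^{(\al)}(x)]\|_{L^p(\R^m)}^p\,dx\Big)^{1/p},\qquad \al\in\N^\ell,\ \be\in\N^m,
\]
so that \eqref{eq:veeR1} reads $\exists\,\si,\ta>0:\sup_{\al,\be}b_{\al,\be}/(\si^{|\al|}\ta^{|\be|})<\infty$. Setting $\rh:=\max\{\si,\ta\}$ (and conversely) this is equivalent to $\exists\,\rh>0:\sup_{\al,\be}b_{\al,\be}/\rh^{|\al|+|\be|}<\infty$, i.e.\ to condition \thetag{1} of Lemma~\ref{sec:proj} for the array $(b_{\al,\be})$ indexed by $\N^{\ell+m}$. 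I also record that, by Lemma~\ref{sec:proj} applied to a single function on $\R^m$ (together with compact regularity), the seminorms $s_{(q_j)}(h):=\sup_{\be}q_{|\be|}\|h^{(\be)}\|_{L^p(\R^m)}/(|\be|!\,M_{|\be|})$, $(q_j)\in\sR$, form a fundamental system of continuous seminorms on $\SprM(\R^m)=F$ (projective description).

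For the direction $(\Rightarrow)$ I would first note the pointwise minorisation $s_{(q_j)}(f^{(\al)}(x))\ge q_{|\be|}\|\p_2^\be[f^{(\al)}(x)]\|_{L^p(\R^m)}/(|\be|!\,M_{|\be|})$, valid for each fixed $\be$, and integrate its $p$-th power over $x\in\R^\ell$ to get $\|s_{(q_j)}\o f^{(\al)}\|_{L^p(\R^\ell)}\ge q_{|\be|}\,|\al|!\,M_{|\al|}\,b_{\al,\be}$. Feeding this into Lemma~\ref{lem:AEW} (with $F=\SprM(\R^m)$ and $s=s_{(q_j)}$, the roles of $m$ and $\ell$ interchanged) yields the \emph{separated} bound $\forall\,(r_k),(q_j)\in\sR:\ \sup_{\al,\be}r_{|\al|}\,q_{|\be|}\,b_{\al,\be}<\infty$. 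To finish I would verify condition \thetag{3} of Lemma~\ref{sec:proj} for $(b_{\al,\be})$: given $(t_k)\in\sR'$ take $\de=1$ and $r=q=t$; since $t\in\sR'$ satisfies $t_{|\al|+|\be|}\le t_{|\al|}\,t_{|\be|}$, the separated bound gives $\sup_{\al,\be}t_{|\al|+|\be|}b_{\al,\be}\le\sup_{\al,\be}t_{|\al|}t_{|\be|}b_{\al,\be}<\infty$. By Lemma~\ref{sec:proj} this is condition \thetag{1}, which is \eqref{eq:veeR1}.

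For the direction $(\Leftarrow)$, assume \eqref{eq:veeR1}, i.e.\ $b_{\al,\be}\le K\rh^{|\al|+|\be|}$ for some $K,\rh>0$. Fix $(q_j)\in\sR$; estimating the supremum over $\be$ by the $\ell^p$-sum and using Tonelli's theorem,
\[
  \|s_{(q_j)}\o f^{(\al)}\|_{L^p(\R^\ell)}^p=\int_{\R^\ell}\sup_{\be}\Big(q_{|\be|}\tfrac{\|\p_2^\be[f^{(\al)}(x)]\|_{L^p(\R^m)}}{|\be|!\,M_{|\be|}}\Big)^p dx\le\sum_{\be}q_{|\be|}^p\,(|\al|!\,M_{|\al|}\,b_{\al,\be})^p,
\]
whence $\|s_{(q_j)}\o f^{(\al)}\|_{L^p(\R^\ell)}\le K\,C\,\rh^{|\al|}\,|\al|!\,M_{|\al|}$ with $C:=(\sum_\be(q_{|\be|}\rh^{|\be|})^p)^{1/p}<\infty$, the sum converging because $(q_j)\in\sR$ decays faster than any geometric sequence while $\#\{\be:|\be|=j\}$ grows only polynomially. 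Hence $r_{|\al|}\|s_{(q_j)}\o f^{(\al)}\|_{L^p(\R^\ell)}/(|\al|!\,M_{|\al|})\le KC\,r_{|\al|}\rh^{|\al|}$ is bounded for every $(r_k)\in\sR$. Since every $s\in\SN(F)$ is dominated by some $s_{(q_j)}$, Lemma~\ref{lem:AEW} gives $f\in\SprM(\R^\ell,\SprM(\R^m))$.

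The one genuinely delicate point is the uniformity in the Roumieu forward direction: the definition only supplies, for each seminorm $s$, a Gevrey-type radius $\si=\si(s)$ in the $\al$-variable, and one must manufacture a \emph{single} pair $\si,\ta$ (equivalently a single $\rh$) controlling both indices simultaneously. The device that resolves this is the closure of $\sR'$ under $t_{i}t_{j}\ge t_{i+j}$, which collapses the two independent $\sR$-parameters $r,q$ into the single joint parameter $t_{|\al|+|\be|}$ at no cost ($\de=1$) — precisely what Lemma~\ref{sec:proj}\thetag{3} requires. The $(\Leftarrow)$ direction is routine by comparison, the only mild care being the interchange of the supremum over $\be$ with the $x$-integration.
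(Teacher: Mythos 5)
Your direction ($\Rightarrow$) is correct and is essentially the paper's own proof: you establish that the seminorms $s_{(q_j)}$, $(q_j)\in\sR$, are continuous on $\SprM(\R^m)$ (domination on each step $W^{M,p}_{\si}(\R^m)$ by $\sup_k q_k\si^k$ times the step norm), integrate the pointwise minorisation, feed it into Lemma \ref{lem:AEW}, and then collapse the two independent $\sR$-parameters via $t_{i+j}\le t_it_j$ for $(t_k)\in\sR'$ so that Lemma \ref{sec:proj}, (3)$\Rightarrow$(1), applies; this is exactly the paper's argument.

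The direction ($\Leftarrow$), however, has a genuine gap. The definition of $\SprM(\R^\ell,F)$ and conditions (2), (3) of Lemma \ref{lem:AEW} quantify over \emph{all} $s\in\SN(F)$, whereas you verify the bound only for the seminorms $s_{(q_j)}$. You bridge the difference with the claim that every $s\in\SN(\SprM(\R^m))$ is dominated by some $s_{(q_j)}$, i.e.\ that these seminorms form a fundamental system for the (LB)-topology. That is a \emph{projective description of the topology} of a weighted inductive limit, and it does not follow from what you cite: Lemma \ref{sec:proj} is a statement about scalar sequences (hence about membership and bounded sets, i.e.\ the bornology), and compact regularity yields regularity and completeness of the limit, but neither implies that a seminorm which is merely known to satisfy $s\o i_\ta\le C_\ta\,\|~\|^{M,p}_{\R^m,\ta}$ on every step $W^{M,p}_\ta(\R^m)$ is dominated by a single diagonal $\sR$-weighted supremum (note that $\inf_\ta\sup_\be\ \ge\ \sup_\be\inf_\ta$ goes the wrong way). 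Such topological projective descriptions are nontrivial theorems of Bierstedt--Meise--Summers type; they may well hold here, but they are proved nowhere in the paper, and indeed all Roumieu ``projective descriptions'' in Section \ref{sec:workingup} are deliberately only bornological.

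The repair is cheap and is precisely the paper's route. For an arbitrary $s\in\SN(\SprM(\R^m))$ use only continuity of $s$ on the single step $W^{M,p}_{2\rh}(\R^m)$, i.e.\ $s(h)\le C\,\|h\|^{M,p}_{\R^m,2\rh}$, so that pointwise
\begin{equation*}
  s\big(f^{(\al)}(x)\big)\le C\,\sup_{\be}\frac{\|\p_2^\be[f^{(\al)}(x)]\|_{L^p(\R^m)}}{(2\rh)^{|\be|}\,|\be|!\,M_{|\be|}},
\end{equation*}
and then run your own $\ell^p$-sum/Tonelli estimate with the geometric weight $(2\rh)^{-|\be|}$ in place of $q_{|\be|}$: the relevant series becomes $\sum_{\be\in\N^m}2^{-p|\be|}<\infty$, giving $\|s\o f^{(\al)}\|_{L^p(\R^\ell)}\le C'\rh^{|\al|}\,|\al|!\,M_{|\al|}$ for all $\al$, which is the defining condition \eqref{eq:veeR2} of membership, with no appeal to Lemma \ref{lem:AEW} needed. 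Observe that $((2\rh)^{-j})_j\notin\sR$, so this is not a special case of your estimate for a clever choice of $(q_j)$; the point is that arbitrary continuous seminorms must be compared against the step norms, not against the $\sR$-seminorms.
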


\begin{proof}
  By definition $f \in \SprM(\R^\ell,\SprM(\R^m))$ if and only if
  \begin{equation} \label{eq:veeR2}
    \forall s \in \SN(\SprM(\R^m)) ~\exists \si >0 : 
    ~\sup_{\al \in \N^\ell} \frac{\|s \o f^{(\al)}\|_{L^p(\R^\ell)}}{\si^{|\al|}\, |\al|!\, M_{|\al|}}  < \infty.
  \end{equation}
  Now, if we denote by $i_\ta : W^{M,p}_{\ta}(\R^m) \hookrightarrow \SprM(\R^m)$ the canonical inclusion and $s$ is 
  a seminorm on $\SprM(\R^m)$, then
  \begin{align*}
    s \in \SN(\SprM(\R^m)) &\Longleftrightarrow ~\forall \ta>0 : s \o i_\ta \in \SN(W^{M,p}_{\ta}(\R^m)) \\
    &\Longleftrightarrow ~\forall \ta>0 ~\exists C>0 : s \o i_\ta \le C \|~\|^{M,p}_{\R^m,\ta}. 
  \end{align*}
  Thus \eqref{eq:veeR1} implies \eqref{eq:veeR2}.

  Let us prove the converse. 
  By Lemma \ref{lem:AEW}, \eqref{eq:veeR2} is equivalent to 
  \begin{equation} \label{eq:SprM1}
    \forall s \in \SN(\SprM(\R^m)) ~\forall (r_k) \in \sR : 
    ~\sup_{\al \in \N^\ell} r_{|\al|}  \frac{\|s \o f^{(\al)}\|_{L^p(\R^\ell)}}{|\al|!\, M_{|\al|}}  < \infty.
  \end{equation}
  For $(t_k) \in \sR$ and $g \in C^\infty(\R^m)$ set 
  \[
    \|g\|^{M,p}_{\R^m,(t_k)} := \sup_{\be \in \N^m} t_{|\be|} \frac{\|g^{(\be)}\|_{L^p(\R^m)}}{|\be|!\, M_{|\be|}}.
  \]
  If $g \in \SprM(\R^m)$, then there exist $\si,C>0$ so that 
  \[
    \|g\|^{M,p}_{\R^m,(t_k)} = \sup_{\be \in \N^m} t_{|\be|} \si^{|\be|} \frac{\|g^{(\be)}\|_{L^p(\R^m)}}{\si^{|\be|} \,|\be|!\, M_{|\be|}} 
    \le C \|g\|^{M,p}_{\R^m,\si};
  \]
  cf.\ the proof of Lemma \ref{sec:proj}, \thetag{1} $\Rightarrow$ \thetag{2}. That is $\|~\|^{M,p}_{\R^m,(t_k)} \in \SN(\SprM(\R^m))$ 
  for all $(t_k) \in \sR$. 
  Thus \eqref{eq:SprM1} implies
  \begin{align*}
    \forall (t_k),(r_k) \in \sR  : 
    ~\sup_{\al \in \N^\ell,\be \in \N^m} 
    t_{|\al|} r_{|\be|} \frac{(\int_{\R^\ell} \int_{\R^m} |\p_2^\be \p_1^\al f^\wedge(x,y)|^p\, dy dx)^{\frac 1 p}}
    {|\be|! |\al|!\, M_{|\be|} M_{|\al|}} < \infty.
  \end{align*}
  In particular, for $t_k=r_k$ and assuming $r_k r_j \ge r_{k+j}$ for all $k,j$, we have 
  \begin{align*}
    \forall (r_k) \in \sR' : 
    ~\sup_{(\al,\be) \in \N^\ell \times \N^m} 
    r_{|\al|+|\be|} \frac{(\int_{\R^\ell} \int_{\R^m} |\p_2^\be \p_1^\al f^\wedge(x,y)|^p\, dy dx)^{\frac 1 p}}
    {|\be|! |\al|!\, M_{|\be|} M_{|\al|}} < \infty.
  \end{align*}
  Applying Lemma \ref{sec:proj} to
  \[
    a_{\al,\be} := \frac{\Big(\int_{\R^\ell} \int_{\R^m} |\p_2^\be \p_1^\al f^\wedge(x,y)|^p\, dy dx\Big)^{1/p}}
    {|\be|! |\al|!\, M_{|\be|} M_{|\al|}}, 
  \]
  we may conclude that 
  \begin{align*}
    \exists \si>0 : 
    ~\sup_{(\al,\be) \in \N^\ell \times \N^m} 
     \frac{(\int_{\R^\ell} \int_{\R^m} |\p_2^\be \p_1^\al f^\wedge(x,y)|^p\, dy dx)^{\frac 1 p}}
    {\si^{|\al|+|\be|}|\be|! |\al|!\, M_{|\be|} M_{|\al|}} < \infty,
  \end{align*}
  that is \eqref{eq:veeR1}.
\end{proof}

Now we are ready to prove the exponential law.

\begin{theorem} \label{thm:Wexp}
  Let $M=(M_k)$ be a weakly log-convex weight sequence with moderate growth.
  We have
  \begin{align*}
    \Sp(\R^\ell \times \R^m,\R^n) &= \Sp(\R^\ell,\Sp(\R^m,\R^n)), \\
    \SpM(\R^\ell \times \R^m,\R^n) &= \SpM(\R^\ell,\SpM(\R^m,\R^n)).
  \end{align*}
\end{theorem}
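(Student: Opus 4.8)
The plan is to reduce everything to scalar-valued functions on a product and then convert the iterated seminorms on the right-hand side into the joint seminorms on the left-hand side by Fubini's theorem. Since $\Sp(\R^m,\R^n)=(\Sp(\R^m))^n$ and $\SpM(\R^m,\R^n)=(\SpM(\R^m))^n$, and the continuous seminorms of a finite product are generated by those of the factors, I would first reduce to the target $\R$, arguing componentwise. By the $C^\infty$-exponential law \cite[3.12]{KM97}, $f\in C^\infty(\R^{\ell+m})$ if and only if $f^\vee\in C^\infty(\R^\ell,C^\infty(\R^m))$ with $(f^\vee)^{(\al)}=(\p_1^\al f)^\vee$; thus all maps may be taken smooth and the remaining issue is purely one of integrability. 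The identity driving both inclusions is, for $\ga=(\al,\be)\in\N^\ell\times\N^m$,
\[
  \|f^{(\ga)}\|_{L^p(\R^{\ell+m})}^p=\int_{\R^\ell}\big\|\p_2^\be\big[(f^\vee)^{(\al)}(x)\big]\big\|_{L^p(\R^m)}^p\,dx ,
\]
which is Fubini's theorem together with $f^{(\ga)}=\p_1^\al\p_2^\be f$; note that no symmetrization as in \eqref{eq:pd} is needed, since here the norms are built from genuine partial derivatives. The feature absent from the $L^\infty$-based classes of Theorem~\ref{thm:Bexp} is that evaluating $f^\vee$ at $x$ restricts $f$ to the null slice $\{x\}\times\R^m$; that $f^\vee(x)$ nevertheless lies in $\Sp(\R^m)$ with a usable bound is exactly Proposition~\ref{prop:H2} (resting on the trace estimate Lemma~\ref{lem:H1}), and this is what licenses the difference-quotient argument of \cite[5.2]{KMRu} showing that $f^\vee$ is a smooth map into the relevant target with $(f^\vee)^{(\al)}=(\p_1^\al f)^\vee$.

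For the plain $\Sp$-case the argument is then immediate, because each defining seminorm of $\Sp(\R^m)$ is a single $L^p$-norm $g\mapsto\|g^{(\be)}\|_{L^p(\R^m)}$, so no supremum over $\be$ intervenes. Testing $f^\vee$ against this seminorm and the index $\al$ produces precisely $\|f^{(\al,\be)}\|_{L^p(\R^{\ell+m})}$ by the displayed identity; hence $f^\vee\in\Sp(\R^\ell,\Sp(\R^m))$ is equivalent to finiteness of all these joint norms, i.e.\ to $f\in\Sp(\R^{\ell+m})$. No growth hypotheses on $M$ are used here.

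For the Beurling class $\SpbM$ a supremum over $\be$ does appear, since the seminorms of $\SpbM(\R^m)$ are, up to constants, the norms $\|g\|^{M,p}_{\R^m,\si_2}=\sup_\be \|g^{(\be)}\|_{L^p(\R^m)}/(\si_2^{|\be|}|\be|!M_{|\be|})$. In $(\Rightarrow)$ this supremum sits inside the $L^p(\R^\ell)$-integral; I would dominate it by the elementary bound $\sup_\be a_\be\le(\sum_\be a_\be^p)^{1/p}$, so that, via the displayed identity, the integral is bounded by $\sum_\be \|\p_2^\be\p_1^\al f\|_{L^p(\R^{\ell+m})}^p/(\si_2^{|\be|}|\be|!M_{|\be|})^p$. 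From here the estimate duplicates the $\SbLM$-computation \eqref{eq:SM2}--\eqref{eq:SM4}: moderate growth (\ref{ssec:ws}.\ref{eq:mg0}) together with $(|\al|+|\be|)!\le 2^{|\al|+|\be|}\,|\al|!\,|\be|!$ controls the numerator, and choosing $\si$ small relative to $\si_1,\si_2$ makes the resulting geometric series over $\be$ converge. The converse $(\Leftarrow)$ is cleaner: bounding the supremum below by a single index $\be$ and invoking weak log-convexity (\ref{ssec:ws}.\ref{eq:alg}) in the form $|\al|!\,|\be|!\,M_{|\al|}M_{|\be|}\le|\ga|!\,M_{|\ga|}$ yields $f\in\SpbM(\R^{\ell+m})$ at once.

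For the Roumieu class $\SprM$ the whole supremum-inside-integral difficulty is already packaged in Lemma~\ref{lem:WR}, which states that $f^\vee\in\SprM(\R^\ell,\SprM(\R^m))$ is equivalent to the single two-parameter estimate \eqref{eq:veeR1}; its proof funnels the nested condition through the projective description of Lemma~\ref{lem:AEW} and the characterization of Lemma~\ref{sec:proj}. Granting \eqref{eq:veeR1}, the equivalence with $f\in\SprM(\R^{\ell+m})$ is again the moderate-growth / weak-log-convexity comparison, now with the constants existentially quantified: weak log-convexity gives the implication \eqref{eq:veeR1}$\,\Rightarrow\,f\in\SprM(\R^{\ell+m})$, and moderate growth gives the reverse. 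I expect the main obstacle to lie not in these combinatorial estimates, which essentially repeat Theorem~\ref{thm:Bexp}, but in the $L^p$-trace phenomenon: establishing that $f^\vee$ is genuinely smooth as a curve into the infinite-dimensional targets $\Sp(\R^m)$, $\SpbM(\R^m)$, and the $(LB)$-space $\SprM(\R^m)$, for which the slice estimates of Lemma~\ref{lem:H1} and Proposition~\ref{prop:H2} are indispensable and have no counterpart in the sup-norm setting.
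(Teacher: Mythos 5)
Your proposal is correct and takes essentially the same route as the paper's own proof: reduction to $n=1$, the Fubini identity for the joint $L^p$-norms, the trace estimates of Lemma \ref{lem:H1} and Proposition \ref{prop:H2} (together with derivation closedness, which follows from moderate growth) to get $f^\vee$ well-defined and smooth into the infinite-dimensional target, Lemma \ref{lem:WR} to package the Roumieu case, and the moderate-growth versus weak-log-convexity comparison for the two inclusions. Your explicit domination $\sup_\be a_\be \le (\sum_\be a_\be^p)^{1/p}$ in the Beurling case is precisely the detail underlying the equivalence \eqref{eq:H2M}$\Leftrightarrow$\eqref{eq:H2M'}, which the paper asserts without spelling it out.
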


\begin{remark*}
  In the $\SpM$-exponential law the inclusion \thetag{$\supseteq$} holds 
  without $M=(M_k)$ having moderate growth, 
  the inclusion \thetag{$\subseteq$}  
  without $M=(M_k)$ being weakly log-convex.
\end{remark*}

\begin{proof}
  Let $\cA \in \{\Sp,\SpM\}$. We may assume without loss of generality that $n=1$.

  \paragraph{{\bf Direction} ($\Rightarrow$)} 
  Let $f \in \cA(\R^\ell \times \R^m)$.  
  By Proposition \ref{prop:H2} and as $M=(M_k)$ has moderate growth and is thus derivation closed, 
  $f^\vee$ takes values in $\cA(\R^m)$. 
  Moreover, the mapping $f^\vee:\R^\ell\to \cA(\R^m)$ is $C^\oo$ with $d^jf^\vee=(\d_1^jf)^\vee$; this can be proved 
  as follows.

  Since $\cA(\R^m)$ is a convenient vector space, by \cite[5.20]{KM97}
  it is  enough to show that the iterated unidirectional
  derivatives $d^j_vf^\vee(x)$ exist, equal $\p_1^jf(x,~)(v^j)$, 
  and are separately bounded for $x$, resp.\  $v$, in compact 
  subsets.
  For $j=1$ and fixed $x$, $v$, and $y$ consider the smooth curve
  $c:t\mapsto f(x+tv,y)$. By the fundamental theorem
  \begin{align*}
    \frac{f^\vee(x+tv)-f^\vee(x)}{t}(y)&-(\d_1f)^\vee(x)(y)(v) 
    = \frac{c(t)-c(0)}t-c'(0) \\
    &= t\int_0^1 s \int_0^1 c''(tsr)\,dr\,ds \\
    &= t\int_0^1 s\int_0^1 \d_1^2f(x+tsrv,y)(v,v)\,dr \,ds.
  \end{align*}
  By Lemma \ref{lem:H1} and as $M=(M_k)$ is derivation closed, 
  $(\d_1^2f)^\vee(K)(B,B)$ is bounded in $\cA(\R^m)$ for each compact subset
  $K\subs \R^\ell$ and the closed unit ball $B \subseteq \R^\ell$, 
  and so this expression is Mackey convergent  
  to 0 in $\cA(\R^m)$ as $t\to 0$.
  Thus $d_vf^\vee(x)$ exists and equals $\d_1f(x,~)(v)$.
  
  Now we proceed by induction, applying the same arguments as before to
  $(d^j_vf^\vee)^\wedge : (x,y)\mapsto \p_1^jf(x,y)(v^j)$ instead of $f$.
  Again
  $(\p_1^2(d^j_vf^\vee)^\wedge)^\vee(K)(B,B) = (\p_1^{j+2}f)^\vee(K)(B,B,v,\dots,v)$
  is bounded, and also the separated boundedness of $d^j_vf^\vee(x)$ follows.
  So the claim is proved.

  Next we show that 
  \begin{equation} \label{eq:Hvee}
    \text{$f^\vee : \R^\ell \to \cA(\R^m)$ is $\cA$}.  
  \end{equation}
  Note that by Fubini's theorem 
  \begin{align} \label{eq:Fubini}
  \begin{split}
    \int_{\R^\ell} \|\p_2^\be[(f^\vee)^{(\al)}(x)]\|_{L^p(\R^m)}^p dx 
    &= \int_{\R^\ell} \int_{\R^m} |\p_2^\be \p_1^\al f(x,y)|^p\, dy dx 
    \\
    &= \|f^{(\al,\be)}\|_{L^p(\R^\ell \x \R^m)}^p.
  \end{split} 
  \end{align}

  \paragraph{\bf Case $\cA =\Sp$}
  \eqref{eq:Hvee} is equivalent to 
  \begin{gather}  \label{eq:H2}
  ~\forall \al \in \N^\ell~\forall s \in \SN(\Sp(\R^m)) : \int_{\R^\ell} [s((f^\vee)^{(\al)}(x))]^p dx < \infty \\
  \Updownarrow \nonumber \\
  ~\forall \al\in \N^\ell,\be \in \N^m : 
  ~\int_{\R^\ell} \|\p_2^\be[(f^\vee)^{(\al)}(x)]\|_{L^p(\R^m)}^p dx =  \|f^{(\al,\be)}\|_{L^p(\R^\ell \x \R^m)}^p
  < \infty 
   \label{eq:H2'}
  \end{gather}
  which is true, since $f \in \Sp(\R^\ell \times \R^m)$, by assumption.  

  \paragraph{\bf Case $\cA =\SpbM$}
  \eqref{eq:Hvee} is equivalent to 
  \begin{gather}  \label{eq:H2M}
  ~\forall \si >0~\forall s \in \SN(\SpbM(\R^m)) : 
  ~\sup_{\al \in \N^\ell} \frac{\|s \o (f^\vee)^{(\al)}\|_{L^p(\R^\ell)}}{\si^{|\al|}\, |\al|!\, M_{|\al|}}  < \infty 
  \\
  \Updownarrow \nonumber \\
  \forall \si,\ta>0 : 
  ~\sup_{\al \in \N^\ell,\be \in \N^m} 
  \frac{(\int_{\R^\ell} \|\p_2^\be[(f^\vee)^{(\al)}(x)]\|_{L^p(\R^m)}^p dx)^{\frac 1 p}}
  {\ta^{|\be|}\si^{|\al|}\, |\be|! |\al|!\, M_{|\be|} M_{|\al|}}  
  \label{eq:H2M'} \\
  \hspace{5cm} =
  \sup_{\al \in \N^\ell,\be \in \N^m} 
  \frac{\|f^{(\al,\be)}\|_{L^p(\R^\ell \x \R^m)}}{\ta^{|\be|}\si^{|\al|}\, |\be|! |\al|!\, M_{|\be|} M_{|\al|}} < \infty 
   \nonumber 
  \end{gather}
  which is true, as $M=(M_k)$ has moderate growth (see \thetag{\ref{ssec:ws}.\ref{eq:mg0}} or \thetag{\ref{thm:Bexp}.\ref{eq:mg}}), 
  since $f \in \SpbM(\R^\ell \times \R^m)$, by assumption.

  \paragraph{\bf Case $\cA =\SprM$}  
  The assumption $f \in \SprM(\R^\ell \times \R^m)$ implies, 
  as $M=(M_k)$ has moderate growth and by \eqref{eq:Fubini}, 
  that 
  \begin{gather} \label{eq:WR}
    ~\exists \si,\ta>0 : 
  ~\sup_{\al \in \N^\ell,\be \in \N^m} 
  \frac{(\int_{\R^\ell} \|\p_2^\be[(f^\vee)^{(\al)}(x)]\|_{L^p(\R^m)}^p dx)^{\frac 1 p}}{\ta^{|\be|}\si^{|\al|}\, |\be|! |\al|!\, M_{|\be|} M_{|\al|}}  
   \\
  \hspace{5cm} =
  \sup_{\al \in \N^\ell,\be \in \N^m} 
  \frac{\|f^{(\al,\be)}\|_{L^p(\R^\ell \x \R^m)}}{\ta^{|\be|}\si^{|\al|}\, |\be|! |\al|!\, M_{|\be|} M_{|\al|}} < \infty. 
  \nonumber
  \end{gather}
  By Lemma \ref{lem:WR}, we may conclude \eqref{eq:Hvee}.

  \paragraph{{\bf Direction} ($\Leftarrow$)}
  Let $f^\vee : \R^\ell \to \cA(\R^m)$ be $\cA$. 
  Then $f^\vee: \R^\ell \to \cA(\R^m)\to C^\oo(\R^m)$
  is $C^\oo$, since the latter inclusion is bounded, 
  by the general Sobolev inequalities.
  %  by Morrey's inequality, i.e., there is $C>0$ so that
  %  \[
  %    \|f\|_{C^{0,1-1/p}(\R)} \le C \|f\|_{W^{1,p}(\R)}, \quad \text{ for all } f \in C^1(\R).
  %  \]

  \paragraph{\bf Case $\cA =\Sp$}
  That $f^\vee : \R^\ell \to \Sp(\R^m)$ is $\Sp$ is equivalent to \eqref{eq:H2} 
  which in turn yields that $f \in \Sp(\R^\ell \times \R^m)$.

  \paragraph{\bf Case $\cA =\SpbM$}
  That $f^\vee : \R^\ell \to \SpbM(\R^m)$ is $\SpbM$ is equivalent to \eqref{eq:H2M},  
  which implies $f \in \SpbM(\R^\ell \times \R^m)$, as $M=(M_k)$ is weakly log-convex 
  (see \thetag{\ref{ssec:ws}.\ref{eq:alg}} or \thetag{\ref{thm:Bexp}.\ref{eq:wlc}}).

  \paragraph{\bf Case $\cA =\SprM$}
  By Lemma \ref{lem:WR}, $f^\vee \in  \SprM(\R^\ell,\SprM(\R^m))$ if and only if \eqref{eq:WR} holds, 
  and \eqref{eq:WR} implies $f \in \SprM(\R^\ell \times \R^m)$, as $M=(M_k)$ is weakly log-convex. 
\end{proof}

\subsection{Topology on $\Sp(\R^\ell,\Sp(\R^m,\R^n))$ and $\SpM(\R^\ell,\SpM(\R^m,\R^n))$} \label{ssec:top}
%-----------------------------------------------------------------------------------------------------------------------

On $\Sp(\R^\ell,\Sp(\R^m,\R^n))$ we consider  
the Fr\'echet topology generated by the following 
fundamental system of seminorms
  \begin{align} \label{eq:Wtop}
    g \mapsto \Big(\int_{\R^\ell} \|\p_2^\be (g^{(\al)}(x))\|_{L^p(\R^m)}^p \, dx\Big)^{\frac{1}{p}}, \quad \al \in \N^\ell, \be \in \N^m. 
  \end{align} 
Analogously, we consider on $\SpbM(\R^\ell,\SpbM(\R^m,\R^n))$   
the Fr\'echet topology generated by the fundamental system of seminorms 
  \begin{align} \label{eq:Wtop2}
    g \mapsto  \sup_{\al \in \N^\ell,\be \in \N^m} 
  \frac{(\int_{\R^\ell} \|\p_2^\be(g^{(\al)}(x))\|_{L^p(\R^m)}^p dx)^{\frac{1}{p}}}{\ta^{|\be|}\si^{|\al|}\, |\be|! |\al|!\, M_{|\be|} M_{|\al|}}, 
  \quad \si,\ta>0.
  \end{align} 
In view of Lemma \ref{lem:AEW} we consider on $\SprM(\R^\ell,\SprM(\R^m,\R^n))$   
the locally convex topology generated by the fundamental system of seminorms 
  \begin{align} \label{eq:Wtop3}
    g \mapsto  \sup_{\al \in \N^\ell,\be \in \N^m} 
  t_{|\be|} r_{|\al|}\,  
  \frac{(\int_{\R^\ell} \|\p_2^\be(g^{(\al)}(x))\|_{L^p(\R^m)}^p dx)^{\frac{1}{p}}}{|\be|! |\al|!\, M_{|\be|} M_{|\al|}}, 
  \quad (r_k), (t_k) \in \sR.
  \end{align}

\begin{theorem} \label{thm:Wiso}
  Let $M=(M_k)$ be a weakly log-convex weight sequence with moderate growth.
  We have bornological isomorphisms
  \begin{align*}
    \Sp(\R^\ell \times \R^m,\R^n) &\cong \Sp(\R^\ell,\Sp(\R^m,\R^n)), \\
    \SpM(\R^\ell \times \R^m,\R^n) &\cong \SpM(\R^\ell,\SpM(\R^m,\R^n)),
  \end{align*}
  where the topology on the right-hand side is the one introduced in Subsection \ref{ssec:top}; 
  for $\Sp$ and $\SpbM$ the isomorphisms are even topological.
\end{theorem}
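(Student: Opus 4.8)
The set-level identities $f\leftrightarrow f^\vee$ are provided by Theorem~\ref{thm:Wexp}, so the only task is to compare the two sides under this bijection as topological, resp.\ convenient, vector spaces. The whole argument rests on Fubini's identity \eqref{eq:Fubini}: writing a multi-index $\gamma\in\N^{\ell+m}$ as $\gamma=(\al,\be)$ with $\al\in\N^\ell$, $\be\in\N^m$ and putting $g=f^\vee$, the building block $(\int_{\R^\ell}\|\p_2^\be(g^{(\al)}(x))\|_{L^p(\R^m)}^p\,dx)^{1/p}$ entering the seminorms of Subsection~\ref{ssec:top} equals the mixed Sobolev norm $\|f^{(\al,\be)}\|_{L^p(\R^\ell\x\R^m)}$. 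As in the proof of Theorem~\ref{thm:Wexp} we may assume $n=1$. Thus only the weight factors in the denominators of the various seminorms have to be compared, and these comparisons are exactly the ones already carried out in Theorem~\ref{thm:Wexp}.

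For $\cA=\Sp$ the two fundamental systems agree term by term: every $\gamma$ factors uniquely as $(\al,\be)$, so by \eqref{eq:Fubini} the seminorm \eqref{eq:Wtop} of $f^\vee$ is literally the Sobolev seminorm $\|f^{(\gamma)}\|_{L^p(\R^\ell\x\R^m)}$ of $f$. Hence $f\mapsto f^\vee$ is a topological isomorphism, no growth conditions being needed.

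For $\cA=\SpbM$ one compares \eqref{eq:Wtop2}, with parameters $\si,\ta$, against the Beurling seminorm $f\mapsto\sup_\gamma\|f^{(\gamma)}\|_{L^p(\R^\ell\x\R^m)}/(\mu^{|\gamma|}\,|\gamma|!\,M_{|\gamma|})$. By \eqref{eq:Fubini} the numerators coincide, so it remains to match the denominators, which is precisely the equivalence of \eqref{eq:H2M} and \eqref{eq:H2M'}: using $\binom{|\gamma|}{|\al|}\le 2^{|\gamma|}$ and moderate growth \eqref{eq:mg} (with constant, say, $\ta_0$) one gets $|\gamma|!\,M_{|\gamma|}\le(2\ta_0)^{|\gamma|}\,|\al|!\,|\be|!\,M_{|\al|}M_{|\be|}$, which yields continuity of $f\mapsto f^\vee$ (given $\si,\ta$, pick $\mu\le\min\{\si,\ta\}/(2\ta_0)$), while weak log-convexity \eqref{eq:wlc} gives $|\al|!\,|\be|!\,M_{|\al|}M_{|\be|}\le|\gamma|!\,M_{|\gamma|}$, which yields continuity of the inverse (given $\mu$, take $\si=\ta=\mu$). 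Hence $f\mapsto f^\vee$ is a topological isomorphism, and in particular a bornological one.

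It remains to treat the Roumieu case $\cA=\SprM$, where only a bornological isomorphism is claimed; this is the crux. Here one compares bounded sets directly. On the left, $\SprM(\R^\ell\x\R^m)$ is a compactly regular (LB)-space, so a set $\cF$ is bounded precisely when it is contained and bounded in a single step $W^{M,p}_\si(\R^\ell\x\R^m)$. On the right, the boundedness of $\{f^\vee:f\in\cF\}$ in $\SprM(\R^\ell,\SprM(\R^m))$ has to be characterized by the existence of a \emph{single} pair $\si,\ta>0$ for which the supremum in \eqref{eq:veeR1} stays finite over all $f\in\cF$; this is the bounded-set analogue of Lemma~\ref{lem:WR}. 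Granting it, \eqref{eq:Fubini} together with the same denominator estimates as in the Beurling case turns the two boundedness conditions into one another under $f\leftrightarrow f^\vee$ (moderate growth for $f\mapsto f^\vee$, weak log-convexity for the inverse), and the bornological isomorphism follows. The main obstacle is therefore exactly this Roumieu bounded-set description: Lemma~\ref{lem:WR} must be upgraded from single functions to bounded families $\cF$, which means re-running the projective description of Lemma~\ref{lem:AEW} and the $\sR$--$\sR'$ reduction of Lemma~\ref{sec:proj} uniformly in $f\in\cF$, so as to produce a single pair $(\si,\ta)$ serving the whole family---the step where the regularity of the inductive limits and the Baire-type argument behind Lemma~\ref{AvsAb} enter.
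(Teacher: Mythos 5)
Your proposal is correct and follows essentially the same route as the paper: Fubini's identity \eqref{eq:Fubini} reduces everything to comparing weight factors, with moderate growth and weak log-convexity giving the two directions in the $\Sp$ and $\SpbM$ cases (the paper proves continuity of $f\mapsto f^\vee$ only and then cites the open mapping theorem, whereas you estimate both directions explicitly; either works), and with bounded sets compared directly in the Roumieu case, using compact regularity of the (LB)-space on the left. The only point to correct is your assessment of the Roumieu ``crux''. The bounded-set analogue of Lemma~\ref{lem:WR} that you leave as granted is not a substantial upgrade, and in particular neither the Baire-type argument behind Lemma~\ref{AvsAb} nor any further appeal to regularity of inductive limits is needed to establish it: Lemma~\ref{sec:proj} is a statement about an \emph{arbitrary} family of nonnegative reals $a_\al$, so you may apply it directly to
\[
  a_{\al,\be}:=\sup_{f\in\cF}\frac{\|f^{(\al,\be)}\|_{L^p(\R^\ell\x\R^m)}}{|\al|!\,|\be|!\,M_{|\al|}\,M_{|\be|}},
\]
that is, take the supremum over the family \emph{before} invoking the lemma; uniformity in $f$ then comes for free, and the equivalence between ``there is one pair $(\si,\ta)$ working for all of $\cF$'' and ``all seminorms \eqref{eq:Wtop3}, indexed by $(r_k),(t_k)\in\sR$, are bounded on $\cF^\vee$'' is exactly the paper's one-line step of using Lemma~\ref{sec:proj} twice (the specialization $t_k=r_k$ with $(r_k)\in\sR'$, so that $r_{|\al|}r_{|\be|}\ge r_{|\al|+|\be|}$, handles the nontrivial direction). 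The Baire argument of Lemma~\ref{AvsAb} is irrelevant here because the target $\R^n$ is finite dimensional, and regularity of the (LB)-limit enters only on the left-hand side, where you already used it correctly.
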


\begin{proof}
  For $\cA \in \{\Sp,\SpbM\}$ the statement follows from the open mapping 
  theorem if we show that the mapping 
  \begin{equation*}
    \cA(\R^\ell \times \R^m,\R^n) \ni f \mapsto f^\vee \in  \cA(\R^\ell,\cA(\R^m,\R^n)),  
  \end{equation*}
  which is bijective by Theorem \ref{thm:Wexp},
  is continuous. But this follows from (\ref{thm:Wexp}.\ref{eq:H2'}) and from (\ref{thm:Wexp}.\ref{eq:H2M'}). 

  For $\cA = \SprM$ we argue as follows. A subset $B$ is bounded in $\SprM(\R^\ell \times \R^m,\R^n)$ if and only if 
  \begin{equation} \label{eq:WrMiso}
    \E \si>0 :
    \sup_{\substack{\al \in \N^\ell,\be \in \N^m\\f \in B}} 
  \frac{\|f^{(\al,\be)}\|_{L^p(\R^\ell \times \R^m)}}{\si^{|\al|+|\be|}\, |\be|! |\al|!\, M_{|\be|} M_{|\al|}} < \infty;  
  \end{equation}
  by the properties of $M=(M_k)$.
  Using Lemma \ref{sec:proj} twice we may conclude that \eqref{eq:WrMiso} is equivalent to 
  \begin{equation}
    \A (r_k), (t_k) \in \sR :
    \sup_{\substack{\al \in \N^\ell,\be \in \N^m\\ f \in B}} 
    t_{|\be|} r_{|\al|}\,  
    \frac{\|f^{(\al,\be)}\|_{L^p(\R^\ell \times \R^m)}}{|\be|! |\al|!\, M_{|\be|} M_{|\al|}} < \infty
  \end{equation}
  which means exactly that $B^\vee$ is bounded in $\SprM(\R^\ell,\SprM(\R^m,\R^n))$
\end{proof}

%-----------------------------------------------------------------------------------------------------------------------
\subsection{Tensor product representations}
%-----------------------------------------------------------------------------------------------------------------------

It is well-known (see \cite[p.~199]{Schwartz66}) that $\cD(\R^n)$ is dense in $\Sp(\R^n)$. 
In the next lemma we show that $\DM(\R^n)$ is dense in $\SpM(\R^n)$ provided that $M=(M_k)$ is non-quasianalytic.

\begin{lemma*} \label{lem:dense}
  Let $M=(M_k)$ be a weakly log-convex non-quasianalytic weight sequence.
  Then $\DM(\R^n)$ is dense in $\SpM(\R^n)$.
\end{lemma*}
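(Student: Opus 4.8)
The plan is to prove density of $\DM(\R^n)$ in $\SpM(\R^n)$ by a standard cut-off argument, using a multiplier from the compactly supported Denjoy--Carleman class and the fact that $\SpM(\R^n)$ is a $\BM(\R^n)$-module (Proposition \ref{prop:incl2}). The non-quasianalyticity of $M=(M_k)$ is precisely what guarantees the existence of non-trivial cut-off functions in $\DM$; this is the Denjoy--Carleman theorem. First I would fix $f \in \SpM(\R^n)$ and choose, for each $R>0$, a function $\chi_R \in \DM(\R^n)$ with $0 \le \chi_R \le 1$, $\chi_R = 1$ on the ball $\{|x| \le R\}$, $\chi_R = 0$ outside $\{|x| \le R+1\}$, and with derivative bounds uniform in $R$: concretely $\chi_R(x) = \chi(x/1)$ rescaled, or rather $\chi_R = \psi(|x|-R)$ built from a single $\psi \in \DM(\R)$, so that $\|\chi_R^{(\al)}\|_{L^\infty} \le C^{|\al|} |\al|! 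M_{|\al|}$ with $C$ independent of $R$. Such $\psi$ exists by non-quasianalyticity.

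Next I would set $f_R := \chi_R f$. Since $\chi_R \in \DM(\R^n) \subseteq \BM(\R^n)$ and $f \in \SpM(\R^n)$, the module property from Proposition \ref{prop:incl2} gives $f_R \in \SpM(\R^n)$, and since $\chi_R$ has compact support, so does $f_R$; hence $f_R \in \DM(\R^n)$. It remains to show $f_R \to f$ in $\SpM(\R^n)$ as $R \to \infty$. The difference is $f - f_R = (1-\chi_R) f$, which is supported in $\{|x| \ge R\}$, and by the Leibniz rule one estimates, for the Beurling seminorm at scale $\si$,
\begin{equation*}
  \|f - f_R\|_{\R^n,\si}^{M,p} = \sup_{\al} \frac{\|\p^\al((1-\chi_R)f)\|_{L^p(\{|x|\ge R\})}}{\si^{|\al|}\,|\al|!\,M_{|\al|}},
\end{equation*}
where each term splits, via the product rule and the uniform derivative bounds on $\chi_R$, into a finite sum of tails $\|\p^\be f\|_{L^p(\{|x| \ge R\})}$ with coefficients controlled using weak log-convexity (inequality \eqref{eq:alg}) to absorb the combinatorial factors into $|\al|!\,M_{|\al|}$ after a harmless rescaling of $\si$.

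The main obstacle is proving that these tails go to zero \emph{uniformly in} $\al$, which is exactly what the supremum over $\al$ demands; pointwise-in-$\al$ decay is immediate from dominated convergence since $f \in \SpM$ forces each $\p^\al f \in L^p$, but the uniform statement needs more care. The key point is that $f \in \SpM(\R^n)$ means the full family $\{\p^\al f/(\si^{|\al|}|\al|!M_{|\al|})\}_\al$ is $L^p$-equi-integrable, being bounded in $L^p$ by a single constant (in the Roumieu case for some $\si$, in the Beurling case for all $\si$); one then argues that an $L^p$-bounded family whose members are genuinely $L^p$ has uniformly small tails away from a large ball --- this follows because for the \emph{geometrically weighted} family one can pass to a slightly larger scale $\si' > \si$ and use that $(\si/\si')^{|\al|} \to 0$ to trade a factor of decay in $|\al|$ against the fixed small tail mass of the finitely many low-order derivatives. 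I would handle the Beurling and Roumieu cases in parallel under the $[M]$ convention, noting that the Roumieu case is if anything easier since only one scale $\si$ must be controlled; in both cases the conclusion is $\|f-f_R\|^{M,p}_{\R^n,[\si]} \to 0$, giving the desired density.
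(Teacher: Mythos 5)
Your proposal follows essentially the same route as the paper's proof: multiply $f$ by a family of $\DM$ cut-offs with derivative bounds uniform in the truncation parameter (the paper uses the dilates $\ph_k(x)=\ph(x/k)$, $k\ge 1$, rather than translates), invoke Proposition \ref{prop:incl2} to see that the truncations lie in $\DM$, and estimate $f-\chi_R f$ by the Leibniz rule, absorbing the binomial factors via weak log-convexity and a harmless rescaling of the parameter $\si$, so that everything reduces to the decay of the weighted tails $\sup_\be \|f^{(\be)}\|_{L^p(\{|x|\ge R\})}/(\si^{|\be|}\,|\be|!\,M_{|\be|})$. On the one delicate analytic point you are in fact more explicit than the paper: the paper simply asserts that this supremum tends to $0$ as $R\to\infty$, whereas your two-scale argument (geometric gain $(\si/\si')^{|\be|}$ for high-order derivatives, dominated convergence for the finitely many low-order ones) is precisely the missing justification, and it works in both the Beurling and Roumieu cases, with convergence in one step sufficing for the inductive limit in the Roumieu case.

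Two small caveats. First, the radial cut-off $\chi_R=\psi(|x|-R)$ requires checking that composition with $|x|$ stays in the class $\DM$ with bounds uniform in $R$; this needs Fa\`a di Bruno type estimates and is not free under mere weak log-convexity. The paper's dilated cut-offs avoid the issue entirely, since $\p^\al\ph_k(x)=k^{-|\al|}\ph^{(\al)}(x/k)$ and $k\ge1$ immediately give bounds independent of $k$. Second, your inference ``$f_R\in\SpM$ and $f_R$ compactly supported, hence $f_R\in\DM$'' is not automatic: upgrading $L^p$-bounds on derivatives to sup-bounds via Sobolev embedding costs derivatives and would require derivation closedness, which the lemma does not assume. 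The clean fix is the one the paper actually uses: Proposition \ref{prop:incl2} states that $\DM$ is an ideal in $\SpM$, so $\chi_R f\in\DM$ directly from $\chi_R\in\DM$ and $f\in\SpM$.
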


\begin{proof}
  Let $\ph \in \DM(\R^n)$ be such that $\ph|_{\{|x|\le 1\}} = 1$ and set $\ph_k(x):=\ph(x/k)$ for $k\in \N_{\ge1}$. 
  By Proposition \ref{prop:incl2}, $\ph_k f \in \DM(\R^n) \subseteq \SpM(\R^n)$ and we have 
  \begin{align*}
     |\p^\al (f-\ph_k f)(x)|  &\le |f^{(\al)}(x)| + \sum_{\be \le \al} \binom{\al}{\be} |f^{(\be)}(x)| |\ph^{(\al-\be)}(x/k)|   
     \\
     &\le 2 \|\ph\|^M_{\R^n,\rh} \sum_{\be \le \al} \binom{\al}{\be} |f^{(\be)}(x)|  \rh^{|\al-\be|} |\al-\be|!\, M_{|\al-\be|}.  
  \end{align*} 
  Since $1-\ph_k(x)$ vanishes for $|x| \le k$, we conclude that  
  \begin{align*}
     \frac{\|\p^\al (f-\ph_k f)\|_{L^p(\R^n)}}{\rh^{|\al|} |\al|!\, M_{|\al|}} 
     &\le  2 \|\ph\|^M_{\R^n,\rh} \sum_{\be \le \al} \binom{\al}{\be} \|f^{(\be)}\|_{L^p(\{|x|>k\})}   
     \frac{\rh^{|\al-\be|} |\al-\be|!\, M_{|\al-\be|}
     }{\rh^{|\al|} |\al|!\, M_{|\al|}}
     \\
     &\le  2 \|\ph\|^M_{\R^n,\rh} \sum_{\be \le \al} \binom{\al}{\be}    
     \frac{\|f^{(\be)}\|_{L^p(\{|x|>k\})}}{\rh^{|\be|} |\be|!\, M_{|\be|}}
     \\
     &\le  2 n^{|\al|} \|\ph\|^M_{\R^n,\rh}     \max_{\be \le \al}
     \frac{\|f^{(\be)}\|_{L^p(\{|x|>k\})}}{\rh^{|\be|} |\be|!\, M_{|\be|}},
   \end{align*} 
   where we used weak log-convexity of $M=(M_k)$,
   and consequently,
   \begin{align*}
     \sup_\al \frac{\|\p^\al (f-\ph_k f)\|_{L^p(\R^n)}}{(n \rh)^{|\al|} |\al|!\, M_{|\al|}} 
     &\le  2 \|\ph\|^M_{\R^n,\rh}     \sup_{\be}
     \frac{\|f^{(\be)}\|_{L^p(\{|x|>k\})}}{\rh^{|\be|} |\be|!\, M_{|\be|}} \to 0 
   \end{align*}
   as $k \to \infty$. This implies the assertion.
\end{proof}

\begin{theorem*}
  Let $M=(M_k)$ be a weakly log-convex non-quasianalytic weight sequence with moderate growth.
  We have linear topological isomorphisms
  \begin{gather*}
    \Sp(\R^m \x \R^n) %\cong \Sp(\R^m, \Sp(\R^n)) 
    \cong \Sp(\R^m) \,\widehat{\otimes}\, \Sp(\R^n)\\
    \SpM(\R^m \x \R^n) %\cong \SpM(\R^m, \SpM(\R^n)) 
    \cong \SpM(\R^m) \,\widehat{\otimes}\, \SpM(\R^n)
  \end{gather*}
  where $\Sp(\R^m) \,\widehat{\otimes}\, \Sp(\R^n)$ (resp.\ $\SpM(\R^m) \,\widehat{\otimes}\, \SpM(\R^n)$) denotes the completion with respect to the 
  topology on $\Sp(\R^m) \otimes \Sp(\R^n)$ (resp.\ $\SpM(\R^m) \otimes \SpM(\R^n)$) induced by its inclusion in 
  $\Sp(\R^m \x\R^n)$ (resp.\ $\SpM(\R^m \x\R^n)$).
\end{theorem*}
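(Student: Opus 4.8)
The plan is to reduce both asserted isomorphisms to a single density statement. By the very definition given in the statement, $\Sp(\R^m)\,\widehat\otimes\,\Sp(\R^n)$ (resp.\ $\SpM(\R^m)\,\widehat\otimes\,\SpM(\R^n)$) is the completion of the algebraic tensor product $\Sp(\R^m)\otimes\Sp(\R^n)$ (resp.\ $\SpM(\R^m)\otimes\SpM(\R^n)$), carrying the topology induced from its natural inclusion
\[
  g\otimes h\ \longmapsto\ \big((x,y)\mapsto g(x)\,h(y)\big)
\]
into $\Sp(\R^m\x\R^n)$ (resp.\ $\SpM(\R^m\x\R^n)$). This map does land in the larger space: for $g,h$ in the respective classes the function $(x,y)\mapsto g(x)h(y)$ satisfies the defining estimates by a direct computation, moderate growth of $M=(M_k)$ being used in the $\SpM$-case to combine the weights $M_{|\al|}M_{|\be|}$ into $M_{|\al|+|\be|}$ (equivalently, this is an instance of Theorem \ref{thm:Wexp}). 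For a dense linear subspace of a complete locally convex space the completion taken with the induced topology is exactly the closure in the ambient space. Hence it suffices to establish (i) completeness of $\Sp(\R^m\x\R^n)$ and $\SpM(\R^m\x\R^n)$ and (ii) density of the image of the algebraic tensor product; then the inclusion extends to a linear topological isomorphism onto the whole space.

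Completeness is immediate. The spaces $\Sp(\R^m\x\R^n)$ and $\SpbM(\R^m\x\R^n)$ are Fr\'echet, and $\SprM(\R^m\x\R^n)$ is a compactly regular (LB)-space; in all cases they are complete (see Section \ref{sec:Wexp} and \cite[Lemma 4.9]{KrieglMichorRainer14a}). Thus the closure of the tensor product is itself complete and coincides with its completion, and the argument is uniform over the Beurling and Roumieu cases since only completeness of the ambient space is used.

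It remains to prove density, where the work lies. By the preceding Lemma, $\DM(\R^m\x\R^n)$ is dense in $\SpM(\R^m\x\R^n)$, and classically $\cD(\R^m\x\R^n)$ is dense in $\Sp(\R^m\x\R^n)$ \cite[p.~199]{Schwartz66}. Since the inclusions $\cD\subseteq\Sp$ and $\DM\subseteq\SpM$ are continuous (Proposition \ref{prop:incl2}), it is enough, by transitivity of density, to show that $\cD(\R^m)\otimes\cD(\R^n)$ is dense in $\cD(\R^m\x\R^n)$ and that $\DM(\R^m)\otimes\DM(\R^n)$ is dense in $\DM(\R^m\x\R^n)$ (even only in the coarser $\SpM$-topology); density is then inherited by the larger tensor products $\Sp(\R^m)\otimes\Sp(\R^n)$ and $\SpM(\R^m)\otimes\SpM(\R^n)$.

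The smooth statement $\cD(\R^m)\otimes\cD(\R^n)\subseteq\cD(\R^m\x\R^n)$ is classical, so the main obstacle is the Denjoy--Carleman case. Given $f\in\DM(\R^m\x\R^n)$, whose support lies in a product box, I would periodize on a large torus $\mathbb T^m\x\mathbb T^n$ and expand $f$ in its Fourier series $\sum_{j,k} c_{jk}\,e^{i\langle j,x\rangle}e^{i\langle k,y\rangle}$; the decay of the coefficients $c_{jk}$ is governed by the Denjoy--Carleman estimates on $f$, and weak log-convexity together with moderate growth of $M=(M_k)$ is precisely what forces the truncated partial sums to converge to $f$ in the relevant $\BM$-type seminorms. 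Multiplying by a fixed product cutoff $\ph\otimes\ps$ with $\ph,\ps\in\DM$ equal to $1$ on the projections of the support — such cutoffs exist because $M=(M_k)$ is non-quasianalytic, and each factor $\ph\,e^{i\langle j,\cdot\rangle}$ lies in $\DM$ since $\DM$ is a $\BM$-module (Proposition \ref{prop:incl2}) — restores compact support and turns every truncated sum into a finite sum of tensors in $\DM(\R^m)\otimes\DM(\R^n)$. Controlling the truncation error in the $\SpM$-topology is the one genuinely technical estimate, with all constants kept under control by the moderate growth condition and the module properties of Proposition \ref{prop:incl2}. With density established, (i) and (ii) combine to give the desired topological isomorphisms.
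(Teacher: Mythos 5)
Your proposal is correct and follows essentially the same route as the paper: reduce to completeness of the ambient spaces plus density, get density via the chain $\cD(\R^m)\otimes\cD(\R^n)\subseteq\cD(\R^m\times\R^n)\subseteq\Sp(\R^m\times\R^n)$ (resp.\ $\DM$/$\SpM$, using the preceding density lemma), and settle the key point---density of $\DM(\R^m)\otimes\DM(\R^n)$ in $\DM(\R^m\times\R^n)$---by the periodization/Fourier-series-with-cutoff argument, which is exactly the proof of Komatsu that the paper cites for this step. One small correction: the fact that elementary tensors $g\otimes h$ lie in $\SpM(\R^{m+n})$ rests on weak log-convexity, i.e.\ $|\al|!\,|\be|!\,M_{|\al|}M_{|\be|}\le(|\al|+|\be|)!\,M_{|\al|+|\be|}$, not on moderate growth, which gives the reverse-type estimate and is needed only for the opposite (splitting) direction.
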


\begin{proof}
  All inclusions in the diagram
  \[
      \xymatrix{
        \Sp(\R^m) \otimes \Sp(\R^n) \ar@{{ >}->}[r]  
        & \Sp(\R^m \x \R^n) \\
        \cD(\R^m)\otimes \cD(\R^n)  \ar@{{ >}->}[u] \ar@{{ >}->}[r]  & \cD(\R^m \x \R^n) \ar@{{ >}->}[u]
      }
    \]
    as well as in 
  \[
      \xymatrix{
        \SpM(\R^m) \otimes \SpM(\R^n) \ar@{{ >}->}[r]  & \SpM(\R^m \x \R^n) \\
        \DM(\R^m)\otimes \DM(\R^n)  \ar@{{ >}->}[u] \ar@{{ >}->}[r] & \DM(\R^m \x \R^n) \ar@{{ >}->}[u]
      }
    \]
    are dense, by the lemma. That $\DM(\R^m)\otimes \DM(\R^n)$ is dense in $\DM(\R^m \x \R^n)$ can be seen 
    as in the proof of \cite[Thm~2.1]{Komatsu77}. 
\end{proof}

\begin{problem*}
  Find an explicit description of the topology on  
  $\Sp(\R^m) \otimes \Sp(\R^n)$ and $\SpM(\R^m) \otimes \SpM(\R^n)$)
  induced by the inclusion in 
  $\Sp(\R^m \x\R^n)$ and $\SpM(\R^m \x\R^n)$, respectively.    
\end{problem*}

For instance, 
motivated by \cite{Chevet69}, one may
consider the topology on $\Sp(\R^m) \otimes \Sp(\R^n)$ 
  generated by the fundamental system of seminorms
  \begin{align} \label{eq:tensor1}
    h  \mapsto \inf \Big( \|(\|g_i^{(\be)}\|_{L^p(\R^n)})_i\|_{l^p} 
    \sup_{\substack{u \in L^q(\R^m)\\ \|u\|_{L^q(\R^m)}\le 1}} \|(\int_{\R^m} f_i^{(\al)} u \,dx)_i \|_{l^q} \Big),
    \quad \al \in \N^m, \be \in \N^n, 
  \end{align}
  where the infimum is taken over all representations $h=\sum_i f_i \otimes g_i$ ($f_i$ and $g_i$ are zero except for 
  finitely many indices) and $1/p+1/q=1$. By H\"older's inequality,
    \begin{align*}
          \|h^{(\al,\be)}\|_{L^p(\R^m \x \R^n)}^p 
          &= \int_{\R^n} \int_{\R^m} \Big|\sum_{i=1}^k f_i^{(\al)}(x)\,  g_i^{(\be)}(y)\Big|^p \,dx\,dy \\
          &= \int_{\R^n} \sup_{\substack{u \in L^q(\R^m)\\ \|u\|_{L^q(\R^m)}\le 1}} \Big| \int_{\R^m} 
          \sum_{i=1}^k f_i^{(\al)}(x) u(x)\, g_i^{(\be)}(y) \,dx \Big|^p\,dy 
          \\  
          &\le \int_{\R^n}\sum_i |g_i^{(\be)}(y)|^p \,dy  \sup_{\substack{u \in L^q(\R^m)\\ \|u\|_{L^q(\R^m)}\le 1}} 
          \Big( \sum_{i} \Big|\int_{\R^m} 
           f_i^{(\al)}(x) u(x) \,dx \Big|^q \Big)^{\frac{p}{q}} 
          \\
          &= 
          \|(\|g_i^{(\be)}\|_{L^p(\R^n)})_i\|_{l^p}^p 
    \sup_{\substack{u \in L^q(\R^m)\\ \|u\|_{L^q(\R^m)}\le 1}} \|(\int_{\R^m} f_i^{(\al)} u \,dx)_i \|_{l^q}^p. 
    \end{align*}  
    Thus, the topology generated by \eqref{eq:tensor1} is at least as strong 
    as the one induced by the inclusion in $\Sp(\R^m \x \R^n)$. Is it the same?

%-----------------------------------------------------------------------------------------------------------------------
\section{Failure of exponential law for non-moderate growth} \label{sec:failure}
%-----------------------------------------------------------------------------------------------------------------------

%-----------------------------------------------------------------------------------------------------------------------
\subsection{The exponential law fails if $M=(M_k)$ has non-moderate growth}
%-----------------------------------------------------------------------------------------------------------------------

The $\BrM$, $\SrLM$, $\DrM$, $\SprM$ exponential law (actually the inclusion ($\subseteq$)) fails if %$L=(L_k)$ or 
  $M=(M_k)$ 
  has non-moderate growth:

\label{thm:f}
\begin{theorem*} 
  Let $M=(M_k)$ be a weakly log-convex non-quasianalytic weight sequence with non-moderate growth and 
  let $L=(L_k)$ be a weight sequence satisfying $1 \le k!\, L_k$. 
  Then:
  \begin{itemize}
    \item There exists $f \in \SrLM(\R^2,\C)$ so that $f^\vee :\R \to \SrLM(\R,\C)$ 
      is not $\SrLM$.
    \item There exists $f \in \BrM(\R^2,\C)$ so that $f^\vee :\R \to \BrM(\R,\C)$ 
      is not $\BrM$.
    \item There exists $f \in \DrM(\R^2,\C)$ so that $f^\vee :\R \to \DrM(\R,\C)$ 
      is not $\DrM$.   
  \end{itemize}    
  If furthermore $M=(M_k)$ is derivation closed then:   
  \begin{itemize}
    \item There exists $f \in \SprM(\R^2,\C)$ so that $f^\vee :\R \to \SprM(\R,\C)$ 
      is not $\SprM$.      
  \end{itemize}  
\end{theorem*}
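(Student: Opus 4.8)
The plan is to attack exactly the place where moderate growth entered the proof of Theorem~\ref{thm:Bexp}: the passage from the ``joint'' estimate \eqref{eq:SM3}, whose denominator carries $M_{\ell_1+\ell_2}$, to the ``separated'' estimate \eqref{eq:SM2}, whose denominator carries $M_{\ell_1}M_{\ell_2}$, which used \eqref{eq:mg}. Negating \eqref{eq:mg0} produces pairs with arbitrarily large quotients, so I would first fix a sequence $(p_i,q_i)$ with $n_i:=p_i+q_i\to\oo$ and
\begin{equation*}
  \frac{M_{n_i}}{M_{p_i}M_{q_i}} \ge R_i^{\,n_i}, \qquad R_i\to\oo
\end{equation*}
(that $n_i\to\oo$ is automatic, since for bounded $p+q$ the quotient ranges over finitely many values). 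The whole point will be to build one function whose mixed derivatives of bidegree $(p_i,q_i)$ are as large as the \emph{joint} bound permits, namely of size $\approx \rh_0^{\,n_i}n_i!\,M_{n_i}$, so that dividing by the separated weight $\rh_1^{p_i}\rh_2^{q_i}p_i!\,q_i!\,M_{p_i}M_{q_i}$ blows up for every $\rh_1,\rh_2$.

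The key structural observation is that $\p_1^{\ell_1}\p_2^{\ell_2}[g(x_1+x_2)] = g^{(\ell_1+\ell_2)}(x_1+x_2)$ depends only on the total order $\ell_1+\ell_2$; hence a function of the form $g(x_1+x_2)$ saturates the joint estimate \eqref{eq:SM3} at \emph{every} interior bidegree, not merely on the coordinate axes, where a single modulated bump would instead concentrate. So I would build a one--variable $g\in\BrM(\R)$ with
\begin{equation*}
  \sup_{x}|g^{(n)}(x)| \le C\,\rh_0^{\,n}\,n!\,M_n \ \ (\text{all }n)
  \qquad\text{and}\qquad
  \sup_{x}|g^{(n_i)}(x)| \ge c\,\rh_0^{\,n_i}\,n_i!\,M_{n_i}\ \ (\text{all }i),
\end{equation*}
the extrema occurring near $x=0$. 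Such $g$ is produced by prescribing a \emph{nonnegative} Fourier transform concentrated on lacunary frequencies $\mu_k\approx \rh_0\,(k{+}1)M_{k+1}/M_k$ with heights tuned so that the moment $\int \mu^{n}\hat g(\mu)\,d\mu$ obeys the Denjoy--Carleman bound and, along $n=n_i$, is saturated; nonnegativity of $\hat g$ rules out the cancellation that would otherwise obstruct the lower bound.

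I would then set $f(x_1,x_2):=g(x_1+x_2)\,w(x_1,x_2)$, where $w$ supplies the decay or support that $g(x_1+x_2)$ lacks transversally: $w\equiv1$ for $\BrM$, while for the $\SrLM$ and $\DrM$ cases $w$ is a fixed nonzero element of $\SrLM(\R^2)$, resp.\ $\DrM(\R^2)$, with $w$ nonvanishing near the origin (these exist as $M$ is non-quasianalytic). Since $g(x_1+x_2)\in\BrM(\R^2)$ and each class is a module over $\BrM$ by Proposition~\ref{prop:incl2}, the product lies in the class, so $f\in\BrM(\R^2)$, resp.\ $\SrLM(\R^2)$, $\DrM(\R^2)$. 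For the lower bound I expand $\p_1^{p_i}\p_2^{q_i}f$ by Leibniz: the term $g^{(n_i)}(x_1+x_2)\,w$ dominates, every other term carrying a factor $g^{(n_i-a-b)}$ with $a+b>0$ that is smaller by the factorial ratio $\approx \rh_0^{-(a+b)}(n_i-a-b)!\,M_{n_i-a-b}/(n_i!\,M_{n_i})$, provided $w$ has a small Denjoy--Carleman constant. Evaluating at a point where $w\neq0$ and $g^{(n_i)}$ is extremal gives $\sup|\p_1^{p_i}\p_2^{q_i}f|\gtrsim \rh_0^{n_i}n_i!\,M_{n_i}$, so for every $\rh_1,\rh_2$,
\begin{equation*}
  \frac{\sup|\p_1^{p_i}\p_2^{q_i}f|}{\rh_1^{p_i}\rh_2^{q_i}p_i!\,q_i!\,M_{p_i}M_{q_i}}
  \gtrsim \binom{n_i}{p_i}\Big(\frac{\rh_0 R_i}{\max(\rh_1,\rh_2)}\Big)^{n_i}\to\oo,
\end{equation*}
whence $f^\vee$ violates \eqref{eq:SM2} and is not $\SrLM$ (resp.\ $\BrM$, $\DrM$).

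For $\SprM$ the only change is that the relevant norm is $L^p$, so the pointwise lower bound must be upgraded to an $L^p$--bound on an interval: from $\hat g\ge0$ one has $g^{(n_i)}$ comparable to its peak on $|x|\lesssim \mu_{k_i}^{-1}$, whence $\|g^{(n_i)}\|_{L^p}\gtrsim \mu_{k_i}^{-1/p}\rh_0^{n_i}n_i!\,M_{n_i}$; here the hypothesis that $M$ is derivation closed bounds $\mu_{k_i}\lesssim \rh_0(n_i{+}1)C^{n_i}$, so the lost factor $\mu_{k_i}^{-1/p}$ is at worst exponential in $n_i$ and is swamped by $R_i^{n_i}$. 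Derivation closedness is also what makes $f^\vee$ take values in $\SprM(\R)$ at all (Proposition~\ref{prop:H2}), so the statement is non-vacuous, and the failure is read off through Lemma~\ref{lem:WR}. The main obstacle is the one--variable extremal construction together with its cross-term estimate: one must simultaneously guarantee the \emph{upper} bound for all $n$ and the \emph{lower} bound along $n_i$ (no cancellation, secured by the nonnegative spectrum), and then check that multiplication by $w$ and the Leibniz remainder do not erode the saturated corner --- everything else is bookkeeping with \eqref{eq:alg} and \eqref{eq:mg0}.
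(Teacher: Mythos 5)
You take a genuinely different route from the paper, and in outline it works. The paper never attacks the two-variable estimate \eqref{eq:SM2} directly: it cites \cite{Thilliez08} for a function $g\in C^{\{M\}}(\R,\C)$ with $g^{(k)}(0)=i^k h_k$, $h_k\ge k!\,M_k$, forms $f=\ph\cdot g(x_1+x_2)$ with a $\DrM$ cutoff $\ph$ that is \emph{identically} $1$ near the origin (so every derivative of $f$ at the origin equals that of $g(x_1+x_2)$, with no Leibniz terms), and then tests $f^\vee$ against an explicit continuous linear functional $\ell(u)=\sum_n i^{3j_n}u^{(j_n)}(0)/(n!\,j_n!\,L_n M_{j_n}n^{n+j_n})$ on the target space, the phases $i^k$ preventing cancellation in the infinite sum; this is also why the paper strengthens the non-moderate-growth pairs to $\big(M_{k_n+j_n}/(M_{k_n}M_{j_n})\big)^{1/(k_n+j_n)}\ge n^2\,n!\,L_n$, namely to beat the $L$-dependent denominators of $\ell$. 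Your plan instead disproves \eqref{eq:SM2} for every $\rh_1,\rh_2$ at the corner $k_1=k_2=0$; this is a valid criterion, since by Lemma \ref{AvsAb} and compact regularity (the first half of the direction ($\Leftarrow$) in the proof of Theorem \ref{thm:Bexp}, which needs neither moderate growth nor log-convexity) membership of $f^\vee$ in the class is \emph{equivalent} to \eqref{eq:SM2} holding for some $\rh_1,\rh_2$ (for $\DrM$ one uses in addition that its bounded sets lie in some $C^{\{M\}}_{K}$, as in the proof of Theorem \ref{thm:Dexp}). Your scheme treats $\BrM$, $\SrLM$, $\DrM$ uniformly, needs only $R_i\to\infty$ and no $L$-factors, and your $\SprM$ variant (interval lower bound from the nonnegative spectrum, derivation closedness bounding the dominant frequency, conclusion via Lemma \ref{lem:WR}) replaces the paper's Sobolev-type inequality.

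The gap is in your Leibniz dominance step. If $w$ is merely nonvanishing near $0$, the cross terms are controlled only by the global \emph{upper} bound on $g$: using $\sum_{a+b=c}\binom{p_i}{a}\binom{q_i}{b}=\binom{n_i}{c}$ and log-convexity of $k!\,M_k$, they sum to $O\big((\rh_0+\rh_w)^{n_i}\,n_i!\,M_{n_i}\big)$, whereas the Thilliez-type lower bound gives only $h_{n_i}\ge n_i!\,M_{n_i}$; since necessarily $\rh_0\ge1$, the error can exceed the main term by an exponential factor, and dominance does not follow. Your argument therefore genuinely requires the \emph{saturated} lower bound $|g^{(n_i)}(0)|\ge c\,\rh_0^{n_i}\,n_i!\,M_{n_i}$ with the \emph{same} $\rh_0$ as in the upper bound, plus shrinking $\rh_w$ by rescaling $w(\la\,\cdot)$; and your stated frequencies $\mu_k\approx\rh_0(k+1)M_{k+1}/M_k$ over all $k$ do not deliver this along the prescribed $(n_i)$. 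What does work is to place the spectral masses only at $\nu_i:=\rh_0\,m_{n_i+1}/m_{n_i}$ (writing $m_k:=k!\,M_k$) with weights $m_{n_i}(\nu_i/\rh_0)^{-n_i}$, after passing to a subsequence of $(n_i)$ along which $(\nu_i)$ is lacunary --- possible since non-quasianalyticity forces $m_{k+1}/m_k\to\infty$; the supporting-line property of the log-convex sequence $(m_k)$ then yields $h_n\le C\rh_0^n m_n$ for all $n$ and $h_{n_i}\ge\rh_0^{n_i}m_{n_i}$. Alternatively, and much more simply, take $w$ identically $1$ near the origin (such $w\in\DrM(\R^2)$ exist by non-quasianalyticity and serve in all cases by Proposition \ref{prop:incl2}): then there are no cross terms at the origin at all, the black-box bound $h_{n_i}\ge n_i!\,M_{n_i}$ already makes your ratio $\binom{n_i}{p_i}\,(R_i/\max(\rh_1,\rh_2))^{n_i}$ blow up, and only the $\SprM$ case still needs the spectral form of $g$ (or the paper's Sobolev trick). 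This is exactly the simplification the paper builds in from the start.
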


\begin{proof}
  Since $M=(M_k)$ has non-moderate growth, there exist $j_n \nearrow \infty$ and $k_n>0$ 
  such that 
  \begin{equation} \label{eq:ex1}
    \Big(\frac{M_{k_n+j_n}}{M_{k_n} M_{j_n}}\Big)^\frac1{k_n+j_n} \ge n^2\, n!\, L_n.  
  \end{equation}
  Since $M=(M_k)$ is weakly log-convex, there exists $g \in C^{\{M\}}(\R,\C)$
  such that $g^{(k)}(0) = i^k h_k$ and $h_k \ge k! M_k$ for all $k$; see \cite[Thm~1]{Thilliez08}.
  By defining $\tilde f(s,t):= g(s+t)$, we have found $\tilde f\in C^{\{M\}}(\R^2,\C)$ with 
  $\p^\al \tilde f(0,0) = i^{|\al|}  h_{|\al|}$ for all $\al \in \N^2$.  
  Choose a function $\ph \in \cD^{\{M\}}(\R^2,\R)$ that is identically $1$ in a neighborhood of the origin.
  %with $\ph|_{\{|x|\le 1\}} = 1$  
  %and $\on{supp} \ph \subseteq \{|x|\le 2\}$. 
  Then $f := \ph \tilde f$ is an element of $\DrM(\R^2,\C)$, $\SrLM(\R^2,\C)$, 
  and of $\BrM(\R^2,\C)$ (by Propositions \ref{prop:incl1} and \ref{prop:incl2}) 
  and satisfies
  \begin{equation} \label{eq:ex2}
    \p^\al f(0,0) = i^{|\al|} h_{|\al|}, \quad h_{|\al|} \ge |\al|!\, M_{|\al|}, \quad \text{ for all } \al \in \N^2.  
  \end{equation}  
%  Indeed, $f\in C^{\{M\}}(\R^2,\C)$ and $\on{supp} f \subseteq B_2(0)$ and so 
%  \begin{align*}
%     \sup_{x \in \R^2} (1+|x|)^p \|f^{(q)}(x)\|_{L^q(\R^2,\C)} &= \sup_{x \in B_2(0)} (1+|x|)^p \|f^{(q)}(x)\|_{L^q(\R^2,\C)} \\
%     & \le 3^p \sup_{x \in B_2(0)} \|f^{(q)}(x)\|_{L^q(\R^2,\C)} \\
%     & \le C\, 3^p\, \si^q\, q!\, M_q  \\
%     & \le C\, \tilde \si^{p+q}\, p!q!\, L_p M_q,
%  \end{align*} 
%  for all $p,q \in \N$ and suitable constants $C,\si,\tilde \si$.
  
  \paragraph{\bf Case $\cA = \SrLM$}
  Consider the linear functional $\ell : \SrLM(\R,\C) \to \C$
  given by 
  \[
  \ell(g)=\sum_n\frac{i^{3j_n} g^{(j_n)}(0)}{n! j_n!\,L_n M_{j_n}\,n^{n+j_n}}.
  \]
  This functional is continuous, since 
  $$
  \Big|\sum_n\frac{i^{3j_n} g^{(j_n)}(0)}{n! j_n!\,L_n M_{j_n}\,n^{n+j_n}}\Big| 
  \le 
  \sum_n\frac{|g^{(j_n)}(0)|}{\si^{n+j_n}\, n! j_n!\,L_n M_{j_n}} \Big(\frac{\si}{n}\Big)^{n+j_n}
  \le 
  C(\si)\,\|g\|^{L,M}_{\R,\si} < \infty,
  $$
  for suitable $\si$, where $C(\si):=\sum_n (\frac{\si}{n})^{n+j_n} < \infty$.
  However, $\ell\o f^\vee $ is not $\SrLM$:
  \begin{align*}
  \|\ell\o f^\vee \|^{L,M}_{\R,\si} &= \sup_{\substack{p,q \in \N\\ t\in \R}} \frac{(1+|t|)^p |(\ell \o f^\vee)^{(q)}(t)|}{\si^{p+q}\, p!q!\, L_p M_q}\\
  &\ge \sup_{q \in \N} \frac{1}{\si^{q}\, q!\, M_q} \Big|\sum_n\frac{i^{3j_n}f^{(j_n,q)}(0,0)}{n! j_n!\,L_n M_{j_n}\,n^{n+j_n}}\Big|  
  \hspace{.5cm} (\text{setting } t=p=0) \\
  &= \sup_{q \in \N} \frac{1}{\si^{q}\, q!\, M_q} \Big|\sum_n\frac{i^{4j_n+q} h_{(j_n,q)}}{n! j_n!\,L_n M_{j_n}\,n^{n+j_n}} \Big| 
  \hspace{.5cm} (\text{by \eqref{eq:ex2}}) \\
  &= \sup_{q \in \N} \frac{1}{\si^{q}\, q!\, M_q} \sum_n\frac{h_{(j_n,q)}}{n! j_n!\,L_n M_{j_n}\,n^{n+j_n}}  \\
  &\ge \sup_{n \in \N}  \frac{h_{(j_n,k_n)}}{n!\,L_n\, k_n! j_n!\,  M_{k_n} M_{j_n}\,n^{n+j_n}\, \si^{k_n}}   
  \hspace{.83cm} (\text{setting } q=k_n) \\
  &\ge \sup_{n \in \N}  \frac{(k_n+j_n)!\, M_{k_n+j_n}}{n!\,L_n\, k_n! j_n!\,  M_{k_n} M_{j_n}\,n^{n+j_n}\, \si^{k_n}}   
  \hspace{.83cm} (\text{by \eqref{eq:ex2}}) \\
  &\ge \sup_{n \in \N}  \frac{ (n^2\, n!\,L_n)^{k_n+j_n}}{n!\,L_n\, n^{n+j_n}\, \si^{k_n}}   
  \hspace{3.18cm} (\text{by \eqref{eq:ex1}}) \\
  &\ge \sup_{n \in \N}  \frac{ n^{2k_n}}{\si^{k_n}} 
  \hspace{4.88cm} (\text{as } n!\,L_n\ge 1 \text{ and } j_n\ge n) \\
  &= \infty \hspace{5.93cm} (\text{as } k_n\ge 1),
  \end{align*}
  for all $\si>0$.

  \paragraph{\bf Case $\cA \in \{\BrM,\DrM\}$}
  An analogous computation shows that, for the continuous
  linear functional $\ell : \BrM(\R,\C) \to \C$
  given by 
  \[
  \ell(g)=\sum_n\frac{i^{3j_n} g^{(j_n)}(0)}{j_n!\, M_{j_n}\,n^{j_n}},
  \]
  we have that $\ell\o f^\vee$ is not $\BrM$. The case $\DrM$ follows immediately, since $\ell$ is also a continuous linear functional on $\DrM(\R,\C)$.

  \paragraph{\bf Case $\cA = \SprM$}
  Let $h \in C^\infty(\R^2,\C)$ satisfy $\on{supp} h \subseteq \{|x|< 1\}$. If $a \not\in \on{supp} h$, we have
  \[
    h(a+tx) = \int_{0}^t \p_1 h(a+sx) x_1 + \p_2 h(a+sx) x_2 \, ds
  \]
  and hence, for $t=1$, $a=(-1,0)$ and $x=-a = (1,0)$,
  \begin{align*}
    |h(0)| &\le  \int_{0}^1 |\p_1 h(s-1,0)|\, ds \le  \Big(\int_{0}^1 |\p_1 h(s-1,0)|^p\, ds\Big)^{1/p} \\
    &\le \Big(\int_\R |\p_1 h(x,0)|^p\, dx\Big)^{1/p} \le C \Big(\int_{\R^2} |\p_1 h(x,y)|^p + |\p_1^2 h(x,y)|^p\, d(x,y)\Big)^{1/p},
  \end{align*}
  where the last inequality can be seen as in the proof of Lemma \ref{lem:H1}.
  In the following we apply this to the function $f$ from \eqref{eq:ex2}, where we assume that $\on{supp} \ph \subseteq \{|x|<1\}$. 
  By Proposition \ref{prop:incl2}, $f$ is an element of $\SprM(\R^2,\C)$. 
  For arbitrary $\si,\ta \ge 1$,  
  \begin{align*}
    \sup_{k,j \in \N} &\frac{\int_\R \|\p_2^k[(f^\vee)^{(j)}(x)]\|_{L^p(\R)}^p dx}{(\ta^k\si^j\, j! k!\, M_j M_k)^p}
    =
    \sup_{k,j \in \N} \frac{\int_{\R^2}|\p_2^k \p_1^j f(x,y)|^p\, d(x,y)}{(\ta^k\si^j\, j! k!\, M_j M_k)^p}\\
    &\ge \sup_{n \in \N} \frac{\int_{\R^2}|f^{(j_n+1,k_n)}(x,y)|^p\, d(x,y) + \int_{\R^2}|f^{(j_n+2,k_n)}(x,y)|^p\, d(x,y)}
    {(\ta^{k_n}\si^{j_n+2}\, (j_n+2)! k_n!\, M_{j_n+2} M_{k_n})^p} %\qquad (\text{as } \si\ge 1, M_k \nearrow)
    \\
    &\ge C^{-p} \sup_{n \in \N} \frac{|f^{(j_n,k_n)}(0,0)|^p}
    {(\ta^{k_n}\si^{j_n+2}\, (j_n+2)! k_n!\, M_{j_n+2} M_{k_n})^p} \\
    &\ge \Big(\frac{1}{C\si^2} \sup_{n \in \N} \frac{(k_n+j_n)!\, M_{k_n+j_n}}
    {\ta^{k_n}\si^{j_n}\, (j_n+2)! k_n!\, M_{j_n+2} M_{k_n}}\Big)^p \\
    &\ge \Big(\frac{1}{C_1} \sup_{n \in \N} \frac{(n^2\, n!\,L_n)^{k_n+j_n}}
    {\ta^{k_n}\si_1^{j_n}}\Big)^p \hspace{3cm} \text{by \eqref{eq:ex1} and \thetag{\ref{ssec:ws}.\ref{eq:dc1}}}\\
    &=\infty,
  \end{align*}
  where $C_1$ and $\si_1$ are suitable constants, using that $k!\,M_k$ is non-decreasing (because log-convex) and derivation closed.
  In view of Lemma \ref{lem:WR}, $f^\vee$ is not $\SprM$. 
  \end{proof}

%-----------------------------------------------------------------------------------------------------------------------
\subsection{The exponential law fails if $L=(L_k)$ has non-moderate growth}
%-----------------------------------------------------------------------------------------------------------------------

We shall now show that the $\SrLM$-exponential law (the inclusion ($\subseteq$)) also fails if 
$L=(L_k)$ has non-moderate growth; 
see Theorem \ref{thm:f2} below. 
We will use the Fourier transform.

Let $E$ be convenient.
For a function $g \in  \cS(\R,E)$ we define its Fourier transform $\sF g$ 
and its inverse Fourier transform $\bar \sF g$ (see the lemma below) by 
\begin{align*}
  \sF g(\xi) &:= \int_\R g(x) e^{-2\pi i x \xi}\, d x, \quad \bar\sF g(\xi) :=  \sF g(-\xi).
\end{align*}
These integrals exist since integration commutes with continuous linear functionals on $E$.
%These integrals exist as elements of $E$, since, for all $\ell \in E^*$,
%\[
%  \ell\Big(\int_\R g(x) e^{\pm 2\pi i x \xi}\, d x\Big) = \int_\R (1+|x|)^2 (\ell \o g)(x) e^{\pm 2\pi i x \xi}\, \frac{d x}{(1+|x|)^2}, 
%\]
%cf. \cite[2.5]{KM97}.

Let $L=(L_k)$ and $M=(M_k)$ be weakly log-convex, non-decreasing, and derivation closed weight sequences.
We shall use the classical result (see \cite[p.~200]{GelfandShilov68})
\begin{equation} \label{eq:classical} 
  \sF(\SrLM(\R,\C)) = \SrML(\R,\C) \quad\text{ and }\quad \bar \sF(\SrLM(\R,\C)) = \SrML(\R,\C).
\end{equation}
We give a short argument for the inclusion $\sF(\SrLM(\R,\C)) \subseteq \SrML(\R,\C)$ in order to demonstrate that 
the assumptions on $L=(L_k)$ and $M=(M_k)$ are sufficient for \eqref{eq:classical}; in the literature often also  
moderate growth is assumed, but this we want to avoid in view of Theorem \ref{thm:f2}.
By partial integration, 
\begin{align*}
  |\xi^{p} (\sF g)^{(q)}(\xi)| 
  \le (2\pi)^{q-p} \sum_{\ell=0}^{\min \{p,q\}} \frac{p!q!}{\ell! (p-\ell)! (q-\ell)!} 
     \int_\R |x^{q-\ell} g^{(p-\ell)}(x)|\, d x.
\end{align*}
Since $g \in \SrLM(\R,\C)$ and $|x|^{q-\ell} \le (1+|x|)^{q-\ell} \le (1+|x|)^{q}$, there are $C,\si >0$ so that
\begin{align*}
  |\xi^{p} (\sF g)^{(q)}(\xi)| 
  &\le (2\pi)^{q} \int_\R \frac{d x}{(1+|x|)^2} \\
  & \hspace{.5cm} \x \sum_{\ell=0}^{\min \{p,q\}} \frac{p!q!}{\ell! (p-\ell)! (q-\ell)!} C \si^{p+q-\ell+2}\ (q+2)! (p-\ell)!\, L_{q+2} M_{p-\ell}
      \\
  &\le 2C  (2\pi)^{q} (q+2)!   p!\, L_{q+2} M_p \sum_{\ell=0}^{\min \{p,q\}} \frac{q!}{\ell!  (q-\ell)!} \si^{p+q-\ell+2}  
      \\
  &\le 2C  (2\pi)^{q} (q+2)!   p!\, L_{q+2} M_p \, \si^{p+2} (1+\si)^q  
\end{align*} 
as $M=(M_k)$ is non-decreasing. 
Thus, $\sF g \in \SrML(\R,\C)$, since $L=(L_k)$ is derivation closed.

\begin{lemma*}
  We have: 
  \begin{enumerate}
    \item[(2)] If $g \in  \SrLM(\R,E)$ then $\sF g, \bar \sF g \in \SrML(\R,E)$.
    \item[(3)] We have $\bar \sF \o \sF = \sF \o \bar \sF = \Id$ 
    on $\SrLM(\R,E)$.
    \item[(4)] Let $f \in \cS(\R^2,\C)$. Then
  \[
    \sF f(\xi_1,\xi_2) = \sF_2(\sF_1 f^\vee(\xi_1))(\xi_2),
  \] 
  where $\sF_1 : \cS(\R,\cS(\R,\C)) \to \cS(\R,\cS(\R,\C))$ and $\sF_2 : \cS(\R,\C) \to \cS(\R,\C)$ 
  denote the respective Fourier transforms.
  \end{enumerate}
\end{lemma*}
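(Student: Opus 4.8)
The plan is to derive (2) and (3) from the scalar inclusion \eqref{eq:classical} established just above by composing with continuous linear functionals, and to obtain (4) by unraveling the definitions and applying Fubini's theorem. The one principle used throughout is that the defining integrals commute with every $\ell \in E^*$, so that $\ell \circ \sF g = \sF(\ell\circ g)$ and $\ell \circ \bar\sF g = \bar\sF(\ell\circ g)$.

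First I would prove (2). Fix $g \in \SrLM(\R,E)$. Since the source $\R$ is a Banach space, membership in the vector-valued class reduces (taking $B$ to be the unit ball, so $\R_B = \R$) to the condition $\ell\circ g \in \SrLM(\R,\C)$ for every $\ell \in E^*$. The scalar inclusion then gives $\ell \circ \sF g = \sF(\ell\circ g)\in \SrML(\R,\C)$ for all $\ell$, and similarly for $\bar\sF g$; as $g \in \cS(\R,E)$ ensures that $\sF g, \bar\sF g \in C^\infty(\R,E)$ (by differentiation under the integral, again tested with functionals), the definition of $\SrML(\R,E)$ yields (2). For (3), the same commutation gives $\ell(\bar\sF\sF g) = \bar\sF\sF(\ell\circ g)$ for all $\ell$, and since $\SrLM(\R,\C)\subseteq \cS(\R,\C)$ (Propositions \ref{prop:incl1} and \ref{prop:incl2}) and classical Fourier inversion $\bar\sF\sF=\Id$ holds on $\cS(\R,\C)$, we obtain $\ell(\bar\sF\sF g) = \ell(g)$ for all $\ell$, hence $\bar\sF\sF g = g$; the reversed composite $\sF\bar\sF g = g$ is handled identically.

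For (4) I would simply unravel the definitions. By the $\cS$-exponential law (Theorem \ref{thm:Bexp}) we may regard $f^\vee \in \cS(\R,\cS(\R,\C))$, so that $\sF_1 f^\vee(\xi_1) = \int_\R f^\vee(x_1) e^{-2\pi i x_1\xi_1}\,dx_1 \in \cS(\R,\C)$. Evaluation at a point $x_2$ is a continuous linear functional on $\cS(\R,\C)$ and therefore commutes with this integral, giving $(\sF_1 f^\vee(\xi_1))(x_2) = \int_\R f(x_1,x_2)e^{-2\pi i x_1\xi_1}\,dx_1$. Applying the scalar $\sF_2$ and using Fubini's theorem then yields
\[
  \sF_2(\sF_1 f^\vee(\xi_1))(\xi_2) = \int_\R\int_\R f(x_1,x_2)\,e^{-2\pi i x_1\xi_1}\,dx_1\,e^{-2\pi i x_2\xi_2}\,dx_2 = \sF f(\xi_1,\xi_2).
\]

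The only point that needs genuine care is that in the Roumieu case the vector-valued class is defined by testing with each $\ell$ separately, so the scale $\si$ in the Gelfand--Shilov seminorm of $\ell\circ g$ is allowed to depend on $\ell$. This is exactly what the functional-wise reduction produces, so no uniformity in $\ell$ — and hence none of the uniform boundedness machinery — is required. The remaining ingredients (commutation of integration with functionals, Fubini, and classical Fourier inversion on $\cS$) are routine.
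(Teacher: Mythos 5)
Your proposal is correct and follows essentially the same route as the paper: parts (2) and (3) are reduced to the scalar result \eqref{eq:classical} and classical Fourier inversion by composing with functionals $\ell \in E^*$ (using $\ell \o \sF g = \sF(\ell \o g)$), and (4) is the same Fubini computation, merely read in the opposite direction with the commutation of evaluation functionals with the vector-valued integral made explicit. The extra details you supply (smoothness of $\sF g$ via differentiation under the integral, and the remark that no uniformity in $\ell$ is needed in the Roumieu case) are sound refinements of what the paper leaves implicit.
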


\begin{proof}
  For each $\ell \in E^*$ we have $\ell \o \sF g = \sF(\ell \o g)$ and $\ell \o \bar \sF g = \bar \sF(\ell \o g)$.
  Thus (2) follows from \eqref{eq:classical}. Furthermore, for all $\ell \in E^*$
  \[
    \ell \o g = \bar \sF \sF(\ell \o g) =  \bar \sF (\ell \o \sF g) = \ell \o \bar \sF \sF g
  \]
  which implies (3). Finally,
  \begin{align*}
    \sF f(\xi_1,\xi_2) &= \int_{\R^2} f(x_1,x_2) e^{-2\pi i (x_1 \xi_1 + x_2 \xi_2)} \, d (x_1,x_2) \\
    &= \int_\R \int_\R f(x_1,x_2) e^{-2\pi i x_1 \xi_1} \, d x_1 \,  e^{-2\pi i  x_2 \xi_2} \, d x_2 \\
    &= \int_\R \sF_1 f^\vee(\xi_1)(x_2) \,  e^{-2\pi i  x_2 \xi_2} \, d x_2 \\
    &= \sF_2(\sF_1 f^\vee(\xi_1))(\xi_2),
  \end{align*}
  that is (4).
\end{proof}

\label{thm:f2}
\begin{theorem*} 
  Let $L=(L_k)$ and $M=(M_k)$ be weakly log-convex, non-decreasing, non-quasianalytic, and derivation closed weight sequences. 
  Assume that $L=(L_k)$ has non-moderate growth. 
  Then:
  \begin{itemize}
    \item There exists $g \in \SrLM(\R^2,\C)$ so that $g^\vee :\R \to \SrLM(\R,\C)$ 
      is not $\SrLM$.
  \end{itemize}
\end{theorem*}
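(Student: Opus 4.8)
The plan is to reduce this statement to the preceding theorem (the non-moderate-$M$ case) by means of the Fourier transform, which interchanges the two weight sequences. Since $L=(L_k)$ is weakly log-convex, non-quasianalytic, and has non-moderate growth, while $M=(M_k)$ is a weight sequence with $k!\,M_k\ge 1$ (indeed $M$ is non-decreasing, so $M_k\ge M_0=1$), we may apply the previous theorem \emph{with the roles of $L$ and $M$ interchanged}: our $L$ takes the part of the non-moderate derivative weight (the upper index) and our $M$ the part of the auxiliary polynomial weight (the lower index). This produces a function $h\in\SrML(\R^2,\C)$ whose associated map $h^\vee:\R\to\SrML(\R,\C)$ is \emph{not} $\SrML$.

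Next I would set $g:=\bar\sF h$, where $\bar\sF$ is the inverse Fourier transform on $\R^2$. By the two-variable form of the classical result \eqref{eq:classical} (obtained by the same partial-integration estimate, valid under the standing hypotheses that $L,M$ are non-decreasing and derivation closed; cf.\ \cite{GelfandShilov68}), $\bar\sF$ maps $\SrML(\R^2,\C)$ onto $\SrLM(\R^2,\C)$, so that $g\in\SrLM(\R^2,\C)$ and $\sF g=h$ by part (3) of the preceding lemma. Because $g\in\SrLM(\R^2,\C)\subseteq\cS(\R^2,\C)$, each slice $g^\vee(\xi_1)$ lies in $\SrLM(\R,\C)$, so the assertion ``$g^\vee:\R\to\SrLM(\R,\C)$ is not $\SrLM$'' is meaningful; this is what remains to be shown.

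The heart of the matter is a transfer principle: the two-dimensional Fourier transform swaps $L\leftrightarrow M$ \emph{simultaneously} in the parameter variable and in the target, so that ``$g^\vee$ is $\SrLM$'' is equivalent to ``$h^\vee$ is $\SrML$''. Concretely, suppose toward a contradiction that $g^\vee\in\SrLM(\R,\SrLM(\R,\C))$. The scalar Fourier transform $\sF_2:\SrLM(\R,\C)\to\SrML(\R,\C)$ is a linear topological isomorphism by \eqref{eq:classical}, hence $\ell'\mapsto\ell'\o\sF_2$ is a bijection $\SrML(\R,\C)^*\to\SrLM(\R,\C)^*$; testing with functionals (as in the definition of the classes and in Lemma \ref{lem:Bdetect}) then yields $\sF_2\o g^\vee\in\SrLM(\R,\SrML(\R,\C))$. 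Applying now the Fourier transform $\sF_1$ in the parameter variable and invoking part (2) of the preceding lemma with the convenient target $E=\SrML(\R,\C)$, which maps $\SrLM(\R,E)$ into $\SrML(\R,E)$, we obtain $\sF_1(\sF_2\o g^\vee)\in\SrML(\R,\SrML(\R,\C))$.

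Finally I would identify $\sF_1(\sF_2\o g^\vee)$ with $h^\vee$. Since $\sF_2$ is continuous linear and integration commutes with continuous linear maps, $\sF_1$ and post-composition with $\sF_2$ commute; combined with part (4) of the preceding lemma, which gives $(\sF g)^\vee(\xi_1)=\sF_2(\sF_1 g^\vee(\xi_1))$, and with $\sF g=h$, this produces $\sF_1(\sF_2\o g^\vee)=h^\vee$. Thus $h^\vee\in\SrML(\R,\SrML(\R,\C))$, i.e.\ $h^\vee$ is $\SrML$, contradicting the choice of $h$; therefore $g^\vee$ is not $\SrLM$, as required. The step I expect to be the main obstacle is precisely this transfer: the bookkeeping of which Fourier transform acts on the parameter variable and which on the target, and the verification that each performs the correct swap $L\leftrightarrow M$, together with checking that the one input reaching slightly beyond the excerpt—the two-variable version of \eqref{eq:classical}—indeed holds under the stated hypotheses.
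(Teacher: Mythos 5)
Your proposal is correct and follows essentially the same route as the paper: both obtain the counterexample from the non-moderate-$M$ theorem with the roles of $L$ and $M$ interchanged, define $g$ as the (inverse) Fourier transform of it using the two-variable analogue of \eqref{eq:classical}, and derive a contradiction by transporting the assumed regularity of $g^\vee$ through post-composition with the scalar transform and part (2) of the lemma. The only difference is cosmetic: the paper pulls $g^\vee$ back via $\bar\sF_2$ and $\bar\sF_1$ to recover $f^\vee$, while you push $g^\vee$ forward via $\sF_2$ and $\sF_1$ and identify the result with $h^\vee$ using part (4) and the commutation of the vector-valued integral with bounded linear maps.
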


\begin{proof}
  Let $f\in \SrML(\R^2,\C)$ be the function from Theorem \ref{thm:f}. 
  Then $f^\vee :\R \to \SrML(\R,\C)$ 
      is not $\SrML$. 
  Set $g:= \sF f \in \SrLM(\R^2,\C)$. 
  Suppose for contradiction that $g^\vee \in \SrLM(\R,\SrLM(\R,\C))$.
  By the above lemma, we have 
  \[
     \sF_2 \o (\sF_1 f^\vee) = g^\vee,
  \]    
  thus 
  \[
     \sF_1 f^\vee = \bar \sF_2 \o g^\vee \in \SrLM(\R,\SrML(\R,\C)),
  \]
  and hence 
  \[
      f^\vee = \bar \sF_1 \o \bar \sF_2 \o g^\vee \in \SrML(\R,\SrML(\R,\C)),
  \]
  a contradiction.
\end{proof}

%----------------------------------------------------------------------------------------------------------------------- 
\section{Stability under composition}  \label{sec:comp}
%-----------------------------------------------------------------------------------------------------------------------

None of the classes $\cA$ of test functions considered in this paper form categories, since there are no 
identities; no non-zero linear mapping belongs to $\cB$. 
We shall see in this section that $\cB$ and $\BM$ are closed under composition, in contrast to all other cases. 
The following example shows that the ``$0$th derivative'' of the composite $f \o g$ may not have the required decay 
properties at infinity, since $g$ is globally bounded.

\begin{example*}
  Let $f,g \in \cD(\R)$ be such that $f|_{[-1,1]} = 1$ and $|g|\le 1$. 
  The composite $f \o g= 1$ is not in $\bigcup_{1 \le p<\infty} \Sp(\R)$, and hence neither in $\cD(\R)$ and nor in $\cS(\R)$.     
\end{example*}

%-----------------------------------------------------------------------------------------------------------------------
\subsection{The cases $\cB$ and $\BM$} \label{ssec:Bcomp}
%-----------------------------------------------------------------------------------------------------------------------

We want to consider mappings of class $\cB$ or $\BM$, but only from the first derivative onwards. 
For Banach spaces $E,F$ and open $U \subseteq E$, we set 
\begin{align*}
  \cB_{\ge 1}(U,F) &:=  \Big\{f \in C^\infty(U,F) : \|f\|^{(k)}_{U} < \infty \text{ for all } k\in \N_{\ge 1}\Big\}, \\
  \BbM_{\ge 1}(U,F) &:= \Big\{f \in C^\infty(U,F) : 
    \forall \rh>0 \sup_{k \in \N_{\ge1}, x\in U} \frac{\|f^{(k)}(x)\|_{\Lin^k(E;F)}}{\rh^k\, k!\,M_k} <\infty \Big\}, \\
  \BrM_{\ge 1}(U,F) &:= \Big\{f \in C^\infty(U,F) : 
    \exists \rh>0 \sup_{k \in \N_{\ge1}, x\in U} \frac{\|f^{(k)}(x)\|_{\Lin^k(E;F)}}{\rh^k\, k!\,M_k} <\infty \Big\}.
\end{align*}
For convenient vector spaces $E,F$ and $c^\infty$-open $U \subseteq E$, let
\begin{align*}
  \cB_{\ge 1}(U,F) &:=  \Big\{f \in C^\infty(U,F) : \forall \ell ~\forall B : \ell \o f \o i_B \in \cB_{\ge1}(U_B,\R)\Big\} \\
  \BM_{\ge 1}(U,F) &:= \Big\{f \in C^\infty(U,F) : \forall \ell ~\forall B : \ell \o f \o i_B \in \BM_{\ge1}(U_B,\R)\Big\},
\end{align*}
where $\ell \in F^*$ and $B \in \sB(E)$. 
Note that $\cB_{\ge 1}$ and $\BM_{\ge 1}$ were denoted $\cB_2$ and $\BM_2$ in \cite{KrieglMichorRainer14a}.  

\begin{definition*}
  Let $\cA \in \{\cB,\BM\}$. 
  An \textbf{$\cA_{\ge1}$-(Banach) plot} in a convenient vector space $F$ is a $\cA_{\ge1}$-mapping 
$g : E \supseteq U \to F$ defined in an open convex subset $U$ of a Banach space $E$.   
\end{definition*}

\begin{proposition*} 
  Let $M=(M_k)$ be a log-convex weight sequence.
  Let $f:U \to F$ be a mapping between a $c^\infty$-open convex subset $U$ of a convenient vector space $E$ and a Banach space $F$.
  Then:
  \begin{align*}
    f \in \cB    &\Longleftrightarrow f \o g \in \cB \text{ for all $\cB_{\ge1}$-plots } g, \\
    f \in \BM    &\Longleftrightarrow f \o g \in \BM \text{ for all $\BM_{\ge1}$-plots } g.
  \end{align*}
\end{proposition*}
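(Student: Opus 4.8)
The plan is to prove both equivalences at once, writing $\cA\in\{\cB,\BM\}$. The implication \thetag{$\Leftarrow$} is the cheap half: for every $B\in\sB(E)$ the inclusion $i_B:U_B\to E$ is the restriction of a bounded linear map, so $i_B^{(1)}\equiv i_B$ is bounded and $i_B^{(k)}=0$ for $k\ge2$; hence $i_B$ is an $\cA_{\ge1}$-plot whose image $i_B(U_B)=U\cap E_B$ lies in $U$. The hypothesis then gives $f\circ i_B\in\cA(U_B,F)$, and post-composing with an arbitrary $\ell\in F^*$ (a continuous linear, hence $\cA$-preserving, map) yields $\ell\circ f\circ i_B\in\cA(U_B,\R)$, which is precisely the defining condition (Subsection~\ref{ssec:convenientstructure}) for $f\in\cA(U,F)$; the same compositions show $f\in C^\infty$ via the test with the inclusions $E_B\to E$ and the functionals.

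For \thetag{$\Rightarrow$} I would fix an $\cA_{\ge1}$-plot $g:E'\supseteq U'\to E$ with $g(U')\subseteq U$, where $U'$ is open convex in a Banach space $E'$. Since $F$ is Banach, $\sS=F^*$ detects bounded sets, so by Lemma~\ref{lem:Bdetect} it suffices to show $h\circ g\in\cA(U',\R)$ for $h:=\ell\circ f\in\cA(U,\R)$, $\ell\in F^*$; as $U'$ is Banach I may read off the $\cA$-estimates for $h\circ g$ directly on $U'$. Because $g$ is $\cA_{\ge1}$, each image $\{g^{(k)}(x)(v_1,\dots,v_k):x\in U',\|v_i\|\le1\}$ with $k\ge1$ is bounded in $E$ (a subset of a convenient space is bounded once every $\ell\in E^*$ is bounded on it).

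The key step is to replace the convenient target $E$ of $g$ by a Banach space. Fixing $x_0\in U'$ and letting $B_1$ be the closed absolutely convex hull of the image of $g^{(1)}$, convexity of $U'$ and the formula $g(x)-g(x_0)=\int_0^1 g^{(1)}(x_0+t(x-x_0))(x-x_0)\,dt$ place $g(x)-g(x_0)$ in $\|x-x_0\|\,B_1\subseteq E_{B_1}$, so the whole (possibly unbounded) image $g(U')$ lies in the affine subspace $g(x_0)+E_{B_1}$. I then enlarge $B_1$ to a single $B\in\sB(E)$ containing $\pm g(x_0)$ and absorbing the higher derivatives at the correct rate: in the $\cB$ and Beurling $\BbM$ cases this is immediate (each order separately, resp.\ \emph{for all $\rho$}), while for the Roumieu class $\BrM$ it requires the uniform boundedness of Lemma~\ref{AvsAb} together with the projective description of Subsection~\ref{sec:proj} to secure one parameter $\rho$ valid for all orders simultaneously. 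With such a $B$ one has $g(U')\subseteq U\cap E_B$ and $\|g^{(k)}(x)\|_{\Lin^k(E';E_B)}$ bounded (resp.\ $\lesssim\rho^k k!\,M_k$), so $\tilde g:=i_B^{-1}\circ g$ is an $\cA_{\ge1}$-plot into the Banach space $E_B$ with image in $U_B$, and $h\circ g=(h\circ i_B)\circ\tilde g$ with $h\circ i_B\in\cA(U_B,\R)$ by the definition of $\cA$ on convenient spaces.

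It remains to compose two maps between Banach spaces, which I would finish by Fa\`a di Bruno. For $\cB$ the global suprema of $\|(h\circ i_B)^{(j)}\|$ over $U_B$ and of $\|\tilde g^{(i)}\|$ ($i\ge1$) over $U'$ combine termwise to bound each $\|(h\circ g)^{(k)}\|_{U'}$. For $\BM$ the same expansion, with $|(h\circ i_B)^{(j)}|\lesssim\sigma^j j!\,M_j$ and $|\tilde g^{(i)}|\lesssim\rho^i i!\,M_i$, produces over each set-partition $\pi$ of $\{1,\dots,k\}$ a weight $M_{|\pi|}\prod_i M_{|B_i|}$, which log-convexity controls through \thetag{\ref{ssec:ws}.\ref{eq:FdB}} by $M_1^{|\pi|}M_k$; with the multinomial coefficients this yields $\lesssim\tau^k k!\,M_k$ (Roumieu) or for every $\tau$ (Beurling). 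Crucially only log-convexity, not moderate growth, is used, precisely because the inner derivatives begin at order $1$. I expect the genuine obstacle to be the Roumieu target reduction of the previous paragraph — producing one bounded $B$ and one constant $\rho$ for all derivative orders — together with the bookkeeping $(\sigma,\rho)\mapsto\tau$; the fact that $\tilde g$ is actually smooth into $E_B$ is routine once its derivatives are seen to factor boundedly through $E_B$.
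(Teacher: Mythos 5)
Your implication \thetag{$\Leftarrow$} and your treatment of $\cB$ and of the Beurling class $\BbM$ are correct and essentially the paper's own argument: per-order sets $B_k$ for $\cB$, one $B$ for each $\rh$ in the Beurling case (where ``for all $\rh$'' commutes with ``for all $\ell\in E^*$''), the integration argument placing $g(V)$ in $E_B$, and Fa\`a di Bruno controlled via \thetag{\ref{ssec:ws}.\ref{eq:FdB}}, so that only log-convexity is used.

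The Roumieu case, however, contains a genuine gap, at precisely the step you flag as the obstacle, and your proposed repair does not close it. Writing $g:G\supseteq V\to U$ for the plot (your $E'\supseteq U'$), to ``secure one parameter $\rh$ valid for all orders simultaneously'' means passing from the hypothesis that for every $\ell\in E^*$ there is a $\rh_\ell$ controlling all derivatives of $\ell\o g$, to the existence of a single $B\in\sB(E)$ and a single $\rh$ with $\set^M_{V,\rh,\ge1}(g)\subseteq B$. This interchange of quantifiers is exactly the content of the Roumieu part of Lemma~\ref{AvsAb}, which is proved only under the assumption that the dual of the \emph{target} of the map carries a Baire vector space topology with continuous point evaluations. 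The target of $g$ is the arbitrary convenient space $E$ of the proposition --- only $F$, the target of $f$, is assumed Banach --- so that hypothesis is not available here. Nor can Proposition~\ref{sec:proj} manufacture the missing $\rh$: for a fixed $(r_k)\in\sR'$ it yields one $B$ (depending on $(r_k)$) and the bounds $r_k\,\sup_{x\in V}\|g^{(k)}(x)\|_{\Lin^k(G;E_B)}\le 2^{-k}\,k!\,M_k$, which, since $1/r_k$ grows faster than every geometric sequence, are strictly weaker than any bound of the form $C\rh^k\,k!\,M_k$. The paper's proof therefore never chooses a $\rh$ for $g$ at all: it fixes $(r_k)\in\sR'$, applies Proposition~\ref{sec:proj} to each $\ell\o g$ and Mackey's theorem to conclude that $\{g(x_0)\}\cup\set^M_{V,(r_k\,2^k),\ge1}(g)$ lies in a single $B\in\sB(E)$, runs Fa\`a di Bruno with these $(r_k)$-weighted bounds using the submultiplicativity $r_{\al_1+\dots+\al_j}\le r_{\al_1}\cdots r_{\al_j}$ of sequences in $\sR'$ (a single $\rh_f$ is extracted only for $f\o i_B$ on $U_B$, legitimately, since $F$ is Banach), obtains that $\set^M_{V,(r_k\de^k)}(f\o g)$ is bounded for $\de=2(1+M_1\rh_f)^{-1}$, and then invokes the implication \thetag{3}\,$\Rightarrow$\,\thetag{1} of Proposition~\ref{sec:proj} to conclude $f\o g\in\BrM$. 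Your Fa\`a di Bruno computation survives nearly verbatim once the $(r_k)$-bounds replace the unavailable estimate $\|\tilde g^{(i)}\|\lesssim\rh^i\, i!\,M_i$; the fix is thus a restructuring of the Roumieu case around the projective description, not a new estimate.
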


\begin{proof}
The direction ($\Leftarrow$) follows from the definition by using $g=i_B$, $B \in \sB(E)$.
For the direction ($\Rightarrow$) let $g: G \supseteq V \to U$ be an $\cA_{\ge1}$-plot, where $\cA \in \{\cB,\BM\}$. 
Fix some point $x_0 \in V$.

\paragraph{\bf Case $\cA= \cB$}
 
  Since $\cB = \cBb$, for all $k \in \N_{\ge1}$ the set 
  $\{g(x_0)\} \cup \bigcup_{1 \le \ell \le k} \set^{(\ell)}_{V}(g)$ 
  is bounded in $E$ and hence contained in some $B_k \in \sB(E)$.
By Fa\`a di Bruno's formula for Banach spaces (see \cite{FaadiBruno1855} for the 1-dimensional version), 
we find for $k\ge 1$,   
\begin{align} \label{eq:Faa}
\begin{split}
  &\frac{\|(f\o g)^{(k)}(x)\|_{\Lin^k(G;F)}}{k!} \le \\
&\le \sum_{j\ge 1} \sum_{\substack{\al\in \N_{>0}^j\\ \al_1+\dots+\al_j =k}}
\frac{\|f^{(j)}(g(x))\|_{\Lin^j(E_{B_k};F)}}{j!}\;\prod_{i=1}^j\;
\frac{\|g^{(\al_i)}(x)\|_{\Lin^{\al_i}(G;E_{B_k})}}{\al_i!}
\end{split}
\end{align}
Since $g' : V \to \Lin(G,E_{B_k})$ we have $g(V) \subseteq E_{B_k}$ (by integration as $g(x_0) \in B_k$ and $V$ is convex), and thus
taking the supremum over $x \in V$, we deduce 
\begin{align*}
  \|f\o g\|^{(k)}_{V} \le k! \sum_j \sum_\al \frac{\|f\|^{(j)}_{U_{B_k}}}{j!} \prod_i \frac{\|g\|^{(\al_i)}_{V}}{\al_i!} < \infty 
\end{align*}
for each $k \ge 1$. For $k=0$ we have 
\[
  \|f\o g\|^{(0)}_{V} \le \|f\|^{(0)}_{U_{B_k}} < \infty.
\]

\paragraph{\bf Case $\cA = \cB^{\bM}$} 
  
  Since $\cB^{\bM} = \cBb^{\bM}$, for all $\rh>0$ the set 
  \begin{align} \label{eq:setM1}
    \set^M_{V,\rh,\ge1}(g) := \Big\{\frac{g^{(k)}(x)(v_1,\dots,v_k)}{k!\,\rh^k\, M_k}:k\in \N_{\ge 1},x\in V,\|v_i\|_G\leq 1\Big\}
  \end{align}
  is bounded in $E$ and hence $\{g(x_0)\} \cup \set^M_{V,\rh,\ge1}(g)$ is contained in some $B \in \sB(E)$.
  (Note that $\set^M_{V,\rh}(g) = \set^M_{V,\rh,\ge1}(g) \cup \{g(x) : x \in V\}$.)
By \eqref{eq:Faa} (with $B_k$ replaced by $B$), \thetag{\ref{ssec:ws}.\ref{eq:FdB}}, and since again $g(V) \subseteq E_B$, 
we find    
\begin{align} 
   &\frac{\|(f\o g)^{(k)}(x)\|_{\Lin^k(G;F)}}{k!M_k} \le \nonumber \\
&\le \sum_{j\ge 1} M_1^j \!\!\!\!\sum_{\substack{\al\in \N_{>0}^j\\ \al_1+\dots+\al_j =k}}
\frac{\|f^{(j)}(g(x))\|_{\Lin^j(E_B;F)}}{j!M_j}\;\prod_{i=1}^j\;
\frac{\|g^{(\al_i)}(x)\|_{\Lin^{\al_i}(G;E_B)}}{\al_i!M_{\al_i}}
\nonumber \\ \label{eq:Faacompute}
&\le  M_1 C_f C_g \rh_f \rh_g^k  \sum_{j\ge 1} \binom{k-1}{j-1} (M_1  \rh_f C_g)^{j-1}  
= M_1 C_f C_g \rh_f \rh_g^k (1 + M_1  \rh_f C_g)^{k-1}.
\end{align}
Given $\rh>0$ take $\si>0$ so that $\rh = \sqrt \si + \si$ and set $\rh_g = \sqrt \si$ and 
$\rh_f = (C_g M_1)^{-1} \sqrt \si$. Then $\|f \o g\|^M_{V,\rh}< \infty$. 

\paragraph{\bf Case $\cA=\cB^{\rM}$}
Fix a sequence $(r_k) \in \sR'$. By Proposition \ref{sec:proj} the set  
\[
  \set^M_{V,(r_k\, 2^k),\ge1}(\ell \o g) 
  := \Big\{r_k\, 2^k \frac{(\ell \o g)^{(k)}(x)(v_1,\dots,v_k)}{k!\, M_k} : k\in \mathbb N_{\ge1},x\in V,\|v_i\|_G\leq 1\Big\}
\]
is bounded 
in $\R$ for each $\ell \in E^*$. Thus the set $\{g(x_0)\} \cup \set^M_{V,(r_k\, 2^k),\ge1}(g)$ is contained in some $B \in \sB(E)$ and 
so, for $k\ge 1$, 
\[
\frac{\|g^{(k)}(a)\|_{\Lin^k(G;E_B)} \, r_k}{k!M_k}
\leq \frac1{2^k}. 
\]
Fa\`a di Bruno's formula \eqref{eq:Faa}, \thetag{\ref{ssec:ws}.\ref{eq:FdB}}, and $g(V) \subseteq E_B$ (as before) then give
\begin{equation} \label{eq:Faacompute2}
  \frac{\|(f\o g)^{(k)}(x)\|_{\Lin^k(G;F)}}{k!M_k}\, r_k  
\le \frac{C_f}{2^k} \sum_{j\ge 1} \binom{k-1}{j-1} (M_1\rh_f)^j 
\le \frac{M_1 C_f \rh_f}{1+M_1\rh_f}  \Big(\frac{1+M_1\rh_f}{2}\Big)^k.
\end{equation}
Thus $\set^M_{V,(r_k \de^k)}(f\o g)$ for $\de = 2 (1+M_1\rh_f)^{-1}$ is bounded in $F$. Proposition \ref{sec:proj}  
implies the statement.
\end{proof}

\begin{remark*}
  In particular, for a convenient vector space $E$, Banach spaces $F,G$, and $c^\infty$-open subsets $U \subseteq E$, $V \subseteq G$
  we have 
  \begin{align*}
    f \in \cB(U,F),~ g \in \cB(V,U)    &\Longrightarrow f \o g \in \cB(V,F), \\
    f \in \BM(U,F),~ g \in \BM(V,U)    &\Longrightarrow f \o g \in \BM(V,F).
  \end{align*}
  Note that here we need not assume convexity of $U$ and $V$ because $g(V)$ is bounded by assumption. 
\end{remark*}

\begin{theorem*} 
  Let $M=(M_k)$ be a log-convex weight sequence.
  Let $E,F,G$ be convenient vector spaces, let $U \subseteq E$ and $V \subseteq F$ be $c^\infty$-open.
  Then:
  \begin{align*}
    f \in  \cB(V,G),~ g \in \cB(U,V) &\Longrightarrow  f \o g \in  \cB(V,G), \\
    f \in  \BM(V,G),~ g \in \BM(U,V) &\Longrightarrow  f \o g \in  \BM(V,G). 
  \end{align*}
\end{theorem*}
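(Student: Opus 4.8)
The plan is to reduce the purely convenient statement to the mixed Banach--convenient--Banach composition already recorded in the Remark preceding this theorem. By the very definition of the classes it suffices to show, for $\cA \in \{\cB,\BM\}$, that for every $\ell \in G^*$ and every $B \in \sB(E)$ one has $\ell \o (f \o g) \o i_B \in \cA(U_B,\R)$. I would rewrite this composite as $(\ell \o f) \o (g \o i_B)$ and treat the two factors separately.

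For the inner factor, observe that $g \in \cA(U,V) \subseteq \cA(U,F)$, so the defining condition gives $\ell' \o g \o i_B \in \cA(U_B,\R)$ for every $\ell' \in F^*$; this is exactly the statement that $g \o i_B$ is an $\cA$-mapping from the open subset $U_B$ of the Banach space $E_B$ into $F$, with values in the $c^\infty$-open set $V$. Hence $g \o i_B \in \cA(U_B,V)$. For the outer factor, taking the test functional equal to $\ell$ in the defining condition for $f \in \cA(V,G)$ yields $\ell \o f \o i_{B'} \in \cA(V_{B'},\R)$ for all $B' \in \sB(F)$, that is, $\ell \o f \in \cA(V,\R)$.

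It now remains to compose these two. Here I would apply the Remark verbatim, with its convenient space taken to be $F$ and its $c^\infty$-open subset taken to be $V$, its source Banach space taken to be $E_B$ with open subset $U_B$, and its Banach target taken to be $\R$: the composition of $g \o i_B \in \cA(U_B,V)$ with $\ell \o f \in \cA(V,\R)$ then lies in $\cA(U_B,\R)$ (for $\cA = \BM$ this uses log-convexity of $M=(M_k)$, which is assumed). Thus $\ell \o (f \o g) \o i_B \in \cA(U_B,\R)$ for all $\ell \in G^*$ and all $B \in \sB(E)$, which is precisely the defining condition for $f \o g \in \cA(U,G)$. Smoothness of $f \o g$ between the convenient spaces is automatic and, as noted after the definition of the classes, need not be verified separately.

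The only real point to get right is the matching of the three-space pattern of the Remark: after testing with a single $\ell \in G^*$ and restricting the source to a single $E_B$, the chain (Banach $E_B$) $\to$ (convenient $F$) $\to$ (Banach $\R$) is exactly the configuration the Remark covers, and in particular no convexity of $U_B$ or $V$ is required, since the Remark has already dispensed with convexity by exploiting that the relevant intermediate image is bounded. Everything else is bookkeeping with the defining testing conditions, so I expect no substantial obstacle beyond this identification.
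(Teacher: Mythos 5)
Your proposal is correct and is essentially the paper's own proof: the paper likewise reduces to showing $\ell \o f \o g \o i_B \in \cA(U_B,\R)$ for all $\ell \in G^*$ and $B \in \sB(E)$, factors this composite as $(\ell \o f)\o(g \o i_B)$, notes that both factors are of class $\cA$ by the defining testing conditions, and invokes the Remark with exactly the three-space pattern (Banach $E_B$) $\to$ (convenient $F$, $c^\infty$-open $V$) $\to$ (Banach $\R$) that you identify. Your observation that the Remark's dispensing with convexity (via boundedness of the intermediate image) is what makes this work is also the point the paper relies on.
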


\begin{proof}
  We must show that for all $B \in \sB(E)$ 
and for all $\ell \in G^*$ the composite
$\ell \o f \o g \o i_B : U_B \to \R$ belongs to $\cA$.
  \[
  \xymatrix{
  U \ar[rr]^{g} && V \ar[rr]^f \ar[drr]_{\ell \o f} && G \ar[d]^{\ell} \\
  U_B \ar[u]^{i_B} \ar[urr]_{g \o i_B} &&&& \R
  }
  \]
By assumption, $g \o i_B$ and $\ell \o f$ are $\cA$.
So the assertion follows from the remark.    
\end{proof}

The proofs of the above proposition and theorem imply the following corollary.

\begin{corollary*} 
  Let $M=(M_k)$ be a log-convex weight sequence.
  Let $E,F,G$ be convenient, let $U \subseteq E$ and $V \subseteq F$ be $c^\infty$-open, and let $V$ be convex.
  Then:
  \begin{align*}
    f \in  \cB_{\ge1}(V,G),~ g \in \cB_{\ge1}(U,V) &\Longrightarrow  f \o g \in \cB_{\ge1}(U,G), \\
    f \in  \BM_{\ge1}(V,G),~ g \in \BM_{\ge1}(U,V) &\Longrightarrow  f \o g \in \BM_{\ge1}(U,G). 
  \end{align*}
\end{corollary*}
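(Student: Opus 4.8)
The plan is to reduce to the Banach-space estimates already carried out in the proof of the Proposition, and then to feed in the ``$\ge 1$'' bounds in place of the full $\cA$-bounds throughout. By the definition of $\cA_{\ge 1}(U,G)$ for convenient vector spaces it suffices to fix $\ell\in G^*$ and $B\in\sB(E)$ and to show that $\ell\o f\o g\o i_B:U_B\to\R$ belongs to $\cA_{\ge 1}(U_B,\R)$; this is exactly the reduction used in the proof of the Theorem. Writing $\ga:=g\o i_B:U_B\to V$ and $\ps:=\ell\o f:V\to\R$, the hypotheses $g\in\cA_{\ge 1}(U,V)$ and $f\in\cA_{\ge 1}(V,G)$ translate into $\ga\in\cA_{\ge 1}(U_B,F)$ and $\ps\in\cA_{\ge 1}(V,\R)$, so the task becomes $\ps\o\ga\in\cA_{\ge 1}(U_B,\R)$.

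Next I would run the Fa\`a di Bruno argument of the Proposition for $\ps\o\ga$ essentially verbatim. The structural point that makes the ``$\ge 1$'' classes stable under composition is that, for $k\ge 1$, the right-hand side of \eqref{eq:Faa} involves only the derivatives $\ps^{(j)}$ with $j\ge 1$ and $\ga^{(\al_i)}$ with $\al_i\ge 1$; the zeroth-order value $\ps(\ga(x))$ never enters the bound for $(\ps\o\ga)^{(k)}(x)$ when $k\ge 1$. Consequently the Beurling estimate \eqref{eq:Faacompute} and the Roumieu estimate \eqref{eq:Faacompute2}, together with the weak-log-convexity inequality \thetag{\ref{ssec:ws}.\ref{eq:FdB}} (which holds since $M=(M_k)$ is log-convex), apply without change and yield a bound on $\sup_{x\in U_B}\|(\ps\o\ga)^{(k)}(x)\|$ for every $k\ge 1$, respectively a bound on the sets $\set^M_{U_B,(r_k\,\de^k)}(\ps\o\ga)$ in the Roumieu case, which by the projective description of Subsection~\ref{sec:proj} is what one must verify. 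Since no estimate on the zeroth derivative is ever demanded, the output is precisely $\cA_{\ge 1}$ rather than $\cA$; this is the entire point of the subscript $\ge 1$.

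The one step that is not purely formal, and which I expect to be the main obstacle, is securing the uniform availability of the $\ge 1$-bounds of $f$ at the points $\ga(x)$: one must place $\{\ga(x):x\in U_B\}$ inside a single Banach trace $F_{B'}$, $B'\in\sB(F)$, so that $\ps\o i_{B'}=\ell\o f\o i_{B'}\in\cA_{\ge 1}(V_{B'},\R)$ supplies the needed uniform estimates on $\ps^{(j)}$. Because $\ga$ carries no zeroth-order control, this containment is not automatic; it is exactly here that convexity is used, in the same manner as in the Proposition, where boundedness of the first derivative is upgraded to boundedness of the image by integrating along line segments from a fixed base point. Once $\ga(U_B)\subseteq F_{B'}$ is in hand, the Fa\`a di Bruno sum can be evaluated and both displayed implications follow at once: the case $\cA=\cB$ from \eqref{eq:Faa} and the Remark following the Proposition, and the case $\cA=\BM$ by splitting into its Beurling and Roumieu versions \eqref{eq:Faacompute} and \eqref{eq:Faacompute2}. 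As every step invokes only derivatives from the first order onwards, the composite lands in $\cA_{\ge 1}(U,G)$, as claimed.
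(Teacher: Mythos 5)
Your proposal follows exactly the route the paper intends: the paper's entire proof of the Corollary is the remark that it follows from the proofs of the Proposition and the Theorem in Subsection \ref{ssec:Bcomp}, i.e.\ the reduction via $\ell\in G^*$ and $B\in\sB(E)$ to showing $\ps\o\ga\in\cA_{\ge1}(U_B,\R)$ for $\ga:=g\o i_B$ and $\ps:=\ell\o f$, followed by the Fa\`a di Bruno estimates \eqref{eq:Faa}, \eqref{eq:Faacompute}, \eqref{eq:Faacompute2}, in which for $k\ge1$ only derivatives of order $\ge1$ of either factor occur. You also correctly isolate the one non-formal point: one must place $\ga(U_B)$ inside a single Banach trace $F_{B'}$ so that $\ps\o i_{B'}\in\cA_{\ge1}(V_{B'},\R)$ supplies uniform bounds on $\ps^{(j)}(\ga(x))$.

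However, your resolution of that point does not work under the stated hypotheses, and this is a genuine gap. Integrating $\ga'$ ``along line segments from a fixed base point'' requires those segments to lie in the \emph{domain} $U_B$ of $\ga$; what the argument needs is therefore convexity of $U_B$, i.e.\ of $U$. The Corollary assumes convexity of $V$ --- the codomain of $g$ and domain of $f$ --- and that is of no use here: boundedness of $\ga'$ into $F_{B'}$ controls $\ga(x)-\ga(x_0)$ only along paths inside $U_B$, and convexity of $V$ is in fact never needed in this direction of the Proposition's proof. (In the Proposition the convex set, there denoted $V$, is the domain of the \emph{inner} map, the plot; in the Corollary's notation that corresponds to $U$, not to $V$ --- your phrase ``in the same manner as in the Proposition'' glosses over exactly this, and the same slip of notation appears to be present in the paper's statement itself.) The gap is not removable under the literal hypotheses: take $U=\bigcup_{n\ge1}(n,n+1)\subseteq\R$, $F=V=\R^{(\N)}$ (finite sequences), $g(x)=e_n+\sin(x-n)e_0$ on $(n,n+1)$, and $f(y)=\sin(y_0)\sum_{m\ge1}m\,\rho(y_m)$ with a smooth bump $\rho$, $\on{supp}\rho\subseteq[1/2,3/2]$, $\rho(1)=1$; then $g\in\cB_{\ge1}(U,V)$ and $f\in\cB_{\ge1}(V,\R)$ (all derivatives of order $\ge1$ are bounded on every Banach trace, which here is a finite-dimensional subspace), yet $(f\o g)(x)=n\sin(\sin(x-n))$ on $(n,n+1)$ has unbounded first derivative, so $f\o g\notin\cB_{\ge1}(U,\R)$. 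Your argument (like the paper's) is complete precisely when $U$ is convex, so that each $U_B$ is convex and the containment $\ga(U_B)\subseteq F_{B'}$ follows; this is also all that the subsequent application needs, since in the category of $\cA_{\ge1}$-mappings between convenient vector spaces one has $U=E$ and $V=F$.
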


Thus, the $\cB_{\ge1}$-mappings between convenient vector spaces form a category, and, if $M=(M_k)$ is log-convex, 
then the $\BM_{\ge1}$-mappings between convenient vector spaces form a category. 
However, these categories are not cartesian closed as seen by the following example.

\begin{example*}
  The function $f : \R^2 \to \R, (x,y) \mapsto xy$ is not $\cB_{\ge1}$, since 
  $f'(x,y) = (\begin{matrix}
      y & x
    \end{matrix})$ 
  is not globally bounded on $\R^2$. 
  However, $f^\vee : x \mapsto (y \mapsto xy)$ has values in $\cB_{\ge1}(\R,\R)$ and
  is $\cB_{\ge1}$. In fact, $(f^\vee)'$ is the constant $\Id \in \cB_{\ge1}(\R,\R)$ and 
  higher derivatives vanish.
\end{example*}

%-----------------------------------------------------------------------------------------------------------------------
\subsection{The cases $\cS$ and $\SLM$} \label{ssec:Scomp}
%-----------------------------------------------------------------------------------------------------------------------

For Banach spaces $E,F$ we set 
\begin{align*}
  \cS_{\ge1}(E,F) &:=  \Big\{f \in C^\infty(E,F) : \|f\|^{(k,\ell)}_{E} < \infty \text{ for all } k\in \N, \ell\in \N_{\ge 1}\Big\}, \\
  \SbLMone(E,F) &:= \Big\{f \in C^\infty(E,F) : \\
  & \hspace{1.8cm} 
    \forall \si>0 \sup_{k\in \N, \ell \in \N_{\ge1}, x\in E} \frac{(1+\|x\|)^k \|f^{(\ell)}(x)\|_{\Lin^\ell(E;F)}}{\si^{k+\ell}\, k!\ell!\,L_k M_\ell} <\infty \Big\}, \\
  \SrLMone(E,F) &:= \Big\{f \in C^\infty(E,F) : \\
  & \hspace{1.8cm} 
    \exists \si>0 \sup_{k\in \N, \ell \in \N_{\ge1}, x\in E} \frac{(1+\|x\|)^k \|f^{(\ell)}(x)\|_{\Lin^\ell(E;F)}}{\si^{k+\ell}\, k!\ell!\,L_k M_\ell} <\infty \Big\}.
\end{align*}
For convenient vector spaces $E,F$, let
\begin{align*}
  \cS_{\ge1}(E,F) &:=  \Big\{f \in C^\infty(E,F) : \forall \ell ~\forall B : \ell \o f \o i_B \in \cS_{\ge1}(E_B,\R)\Big\} \\
  \SLMone(E,F) &:= \Big\{f \in C^\infty(E,F) : \forall \ell ~\forall B : \ell \o f \o i_B \in \SLMone(E_B,\R)\Big\},
\end{align*}
where $\ell \in F^*$ and $B \in \sB(E)$.

\begin{theorem*} 
  Let $M=(M_k)$ and $L=(L_k)$ be weight sequences and assume that $M$ is log-convex. 
  Let $E,F,G$ be convenient.
  We have:
  \begin{align*}
    f \in  \cB_{\ge1}(F,G),~ g \in \cS_{\ge1}(E,F) &\Longrightarrow  f \o g \in \cS_{\ge1}(E,G), \\
    f \in  \BM_{\ge1}(F,G),~ g \in \SLMone(E,F) &\Longrightarrow  f \o g \in \SLMone(E,G). 
  \end{align*}
\end{theorem*}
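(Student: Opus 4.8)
The plan is to follow the structure of the $\cB$, $\BM$ composition results proved above in Subsection~\ref{ssec:Bcomp}. The conceptual point is that in Fa\`a di Bruno's formula \eqref{eq:Faa}, every summand contributing to a derivative of order $\ell\ge1$ is a product of a factor $\phi^{(j)}(\cdots)$ with $j\ge1$ and factors of the inner map of orders $\al_i\ge1$ only; hence only the behaviour \emph{from the first derivative onwards} enters, and the undecaying zeroth order values never appear. This is exactly what makes $\cB_{\ge1}$ (resp.\ $\BM_{\ge1}$) on the outer map and $\cS_{\ge1}$ (resp.\ $\SLMone$) on the inner map compatible.

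First I would reduce to Banach spaces. By the definition of the classes on convenient vector spaces it suffices to fix $\ell\in G^*$ and $B\in\sB(E)$ and to show $(\ell\o f)\o(g\o i_B)\in\cS_{\ge1}(E_B,\R)$ (resp.\ $\SLMone(E_B,\R)$). Writing $\phi:=\ell\o f$ and $h:=g\o i_B$, we have $\phi\in\cB_{\ge1}(F,\R)$ (resp.\ $\BM_{\ge1}(F,\R)$), while $h:E_B\to F$ is a $\cS_{\ge1}$- (resp.\ $\SLMone$-)plot from the Banach space $E_B$ into the convenient space $F$ (take the bounded set $B$ in the definition of $g$). Since $E_B$ is convex and, by the analogue of Lemma~\ref{AvsAb} for the $\ge1$ classes, $h$ has bounded first derivative, the point $\{h(0)\}$ together with the relevant (weighted) derivative sets of $h$ is bounded; absorbing it into some $B'\in\sB(F)$ and integrating along segments from $0$ gives $h(E_B)\subseteq F_{B'}$, so $h=i_{B'}\o\tilde h$ with $\tilde h:E_B\to F_{B'}$ and $\phi\o i_{B'}\in\cB_{\ge1}(F_{B'},\R)$ (resp.\ $\BM_{\ge1}(F_{B'},\R)$). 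In the $\cS_{\ge1}$ case this can be done order by order, since for fixed $k,\ell$ only the finitely many sets $\set^{(k,a)}_{E_B}(h)$ and $\set^{(0,a)}_{E_B}(h)$ with $1\le a\le\ell$ occur in \eqref{eq:Faa}. In the Roumieu case I would instead feed the $\sR'$-machinery of Proposition~\ref{sec:proj} (applied to the $\ge1$ weighted sets $\set^{L,M}_{E_B,(r_k\de^k)}(h)$, as in the $\cB^{\rM}$ computation above) so as to absorb \emph{all} orders into a single $B'$ with a uniform Denjoy--Carleman bound on the $F_{B'}$-norms of $\tilde h^{(\al)}$.

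It then remains to run the Banach-to-Banach estimate. Applying \eqref{eq:Faa} to $\phi\o\tilde h$, I would move the entire Schwartz weight $(1+\|x\|)^k$ onto a single factor $\tilde h^{(\al_1)}$ --- legitimate precisely because $\al_1\ge1$, so this factor carries the $\cS_{\ge1}$/$\SLMone$ decay. Then $\|\phi^{(j)}\|$ is bounded uniformly for $j\ge1$ (from $\phi\in\cB_{\ge1}/\BM_{\ge1}$), the weighted factor is controlled by the $\set^{(k,\al_1)}_{E_B}$- (resp.\ $\set^{L,M}_{E_B,\rh}$-)bound, and the remaining $\tilde h^{(\al_i)}$ with $i\ge2$ by their unweighted order $\ge1$ bounds. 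For $\cS_{\ge1}$ the finite sum is then bounded and we are done. For $\SLMone$ I would, exactly as in \eqref{eq:Faacompute} and \eqref{eq:Faacompute2}, invoke \thetag{\ref{ssec:ws}.\ref{eq:FdB}} and $\sum_{j\ge1}\binom{\ell-1}{j-1}(M_1\rh_f C_g)^j=M_1\rh_f C_g(1+M_1\rh_f C_g)^{\ell-1}$, the $L_k$ weight in the target norm absorbing the polynomial weight that was routed onto $\tilde h^{(\al_1)}$; finally the constant $\si$ is chosen to swallow $(1+M_1\rh_f C_g)^{\ell-1}$ against $\si^{k+\ell}$ (in the Roumieu case $\si:=\si_g(1+M_1\rh_f C_g)$; in the Beurling case by the balancing $\si=\sqrt\ta+\ta$ with $\rh_f=(C_g M_1)^{-1}\sqrt\ta$, as above).

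I expect the main obstacle to be the reduction of the intermediate convenient space $F$ to a Banach space $F_{B'}$ in the Denjoy--Carleman case: one must produce a \emph{single} bounded $B'\subseteq F$ whose Minkowski functional controls the $F_{B'}$-norms of $\tilde h^{(\al)}$ for all orders simultaneously and with the correct Roumieu/Beurling weights, and one must keep careful track of how the polynomial weight $(1+\|x\|)^k$ is carried by the $L_k$ factor. The $\cS_{\ge1}$ case avoids this (finitely many orders suffice for each target seminorm), so the substance of the argument lies in the $\SLMone$ estimate, which is nonetheless a direct adaptation of \eqref{eq:Faacompute}/\eqref{eq:Faacompute2}.
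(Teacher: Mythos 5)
Your proposal is correct and follows essentially the same route as the paper's own proof: reduction to $\ell\o f\o g\o i_B$ with $E_B$, $\R$ Banach, absorption of $\{g(0)\}$ together with the relevant derivative sets of $g$ into a bounded $B'\in\sB(F)$, the integration argument giving $g(E_B)\subseteq F_{B'}$, Fa\`a di Bruno with the polynomial weight $(1+\|x\|)^k$ routed onto a single factor of order $\ge 1$, the same constant balancing as in \eqref{eq:Faacompute} and \eqref{eq:Faacompute2}, and the $\sR'$-machinery of Proposition~\ref{sec:proj} in the Roumieu case. The only (immaterial) deviations are that you place the weight on the first rather than the last inner factor, and that your explicit inclusion of the weighted sets $\set^{(k,a)}_{E_B}(h)$ in $B'$ for the $\cS_{\ge1}$ case is, if anything, slightly more careful than the paper's choice of $B_q$.
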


\begin{corollary*}
  The $\cS_{\ge1}$-mappings between convenient vector spaces form a category. If $M=(M_k)$ is log-convex, 
  then also the $\SLMone$-mappings between convenient vector spaces form a category. 
  Neither of this categories in cartesian closed, by Example \ref{ssec:Bcomp}.
\end{corollary*}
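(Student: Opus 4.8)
The plan is to imitate the reduction-plus-Faà-di-Bruno argument carried out for $\cB$ and $\BM$ in Subsection~\ref{ssec:Bcomp}; the only genuinely new point is the polynomial decay weight $(1+\|\cdot\|)^k$, which I will absorb into a single derivative factor of the inner map $g$.

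First I would reduce to real-valued maps on a Banach space, exactly as in the $\cB$/$\BM$ theorem: membership in $\cS_{\ge 1}(E,G)$ (resp.\ $\SLMone(E,G)$) is tested by the maps $\ell\o(\,\cdot\,)\o i_B$ with $\ell\in G^*$ and $B\in\sB(E)$, and $\ell\o(f\o g)\o i_B=(\ell\o f)\o(g\o i_B)$. Here $\ell\o f\in\cB_{\ge 1}(F,\R)$ (resp.\ $\BM_{\ge 1}(F,\R)$) by hypothesis, while testing with functionals shows $g\o i_B\in\cS_{\ge 1}(E_B,F)$ (resp.\ $\SLMone(E_B,F)$) on the Banach space $E_B$. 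Thus it suffices to prove the $\cS_{\ge 1}$-/$\SLMone$-analogue of the Proposition of Subsection~\ref{ssec:Bcomp}: if $g$ is an $\cS_{\ge 1}$- (resp.\ $\SLMone$-) plot into a convenient space $F$ and $f\in\cB_{\ge 1}(F,\R)$ (resp.\ $\BM_{\ge 1}(F,\R)$), then $f\o g$ is $\cS_{\ge 1}$ (resp.\ $\SLMone$).

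The core computation is Faà di Bruno's formula~\eqref{eq:Faa}. Fix a derivative order $k\ge 1$ and a decay exponent $N$. Since $k\ge1$ forces $j\ge1$ in every term, each summand carries at least one factor $g^{(\al_i)}(x)$ with $\al_i\ge1$, and I will park the whole weight $(1+\|x\|)^N$ on that factor. As in the $\cB$/$\BM$ proof I choose, for the given $k$ and $N$, a set $B\in\sB(F)$ containing the base point together with the finitely many derivative--image sets of orders $\le k$ (including the weighted sets $\{(1+\|x\|)^N g^{(\al_1)}(x)(v)\}$), so that $g$ maps into $F_B$ by integration over the convex domain $E_B$ and $\|f^{(j)}\|$ is controlled on $F_B$. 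Then $\|f^{(j)}(g(x))\|$ is \emph{globally} bounded by the $\cB_{\ge 1}$/$\BM_{\ge 1}$ estimate on $f$ (no decay is asked of $f$, which is precisely why the $0$th derivative need not be controlled), the chosen factor obeys $(1+\|x\|)^N\|g^{(\al_1)}(x)\|\le C_g\,\si_g^{N+\al_1}N!\,\al_1!\,L_N M_{\al_1}$, and the remaining factors obey $\|g^{(\al_i)}(x)\|\le C_g\,\si_g^{\al_i}\,\al_i!\,M_{\al_i}$. For $\cS_{\ge 1}$ the finite sum is then trivially bounded. For $\SLMone$ I run the bookkeeping of \eqref{eq:Faacompute}--\eqref{eq:Faacompute2}: the $M$-product collapses by \eqref{eq:FdB} via $M_j\,M_{\al_1}\cdots M_{\al_j}\le M_1^jM_k$, the $\si_g$-powers combine to $\si_g^{N+k}$, a single $N!\,L_N$ survives, and $\sum_j\binom{k-1}{j-1}(C_g\rh_f M_1)^j=(C_g\rh_f M_1)(1+C_g\rh_f M_1)^{k-1}$ yields
\[
  \frac{(1+\|x\|)^N\,\|(f\o g)^{(k)}(x)\|}{\si^{N+k}\,N!\,k!\,L_N M_k}\le \frac{C_f\,C_g\,\rh_f M_1}{1+C_g\rh_f M_1}\Big(\frac{\si_g}{\si}\Big)^{N}\Big(\frac{\si_g(1+C_g\rh_f M_1)}{\si}\Big)^{k}.
\]
Taking $\si\ge\si_g(1+C_g\rh_f M_1)$ makes the right-hand side uniformly bounded; in the Roumieu case the given $\rh_f,\si_g$ furnish one admissible $\si$, while in the Beurling case I first set $\si_g=\si/2$ and then shrink $\rh_f$ so that $C_g\rh_f M_1\le1$. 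Note that moderate growth is never used, in accordance with the hypothesis that only $M$ is log-convex.

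The hard part is the reduction through the convenient intermediate space $F$ and the attendant choice of $B\in\sB(F)$: one must verify that the range of the plot is bounded (it is, because each $\ell'\o g$ has a decaying first derivative, hence is bounded, so the range is weakly and therefore actually bounded) and that $B$ may be enlarged to absorb the weighted derivative--image sets for the given $k,N$. This rests on the $\cS_{\ge 1}$-/$\SLMone$-analogues of the identifications $\cB=\cBb$ (Lemma~\ref{AvsAb}) and of the Roumieu projective description (Proposition~\ref{sec:proj}), which I would record first. Finally the Corollary is immediate: since $\cS_{\ge 1}\subseteq\cB_{\ge 1}$ and $\SLMone\subseteq\BM_{\ge 1}$, the outer factor of a composite of two such maps is in particular of type $\cB_{\ge 1}$ (resp.\ $\BM_{\ge 1}$), so the Theorem gives closure under composition, while cartesian closedness already fails by Example~\ref{ssec:Bcomp}.
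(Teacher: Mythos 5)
Your plan reproduces the paper's own proof of the composition theorem for the cases $\cS_{\ge1}$ and $\SbLMone$: the same reduction to $\ell\o f\o g\o i_B$ with the intermediate space $F$ kept convenient, the same Fa\`a di Bruno estimate with the polynomial weight parked on a single derivative factor of $g$ (the paper puts it on the last factor $\al_j$, you on the first; immaterial), the same use of (\ref{ssec:ws}.\ref{eq:FdB}) together with the binomial bookkeeping of (\ref{ssec:Bcomp}.\ref{eq:Faacompute}), and the same derivation of the Corollary from the theorem via $\cS_{\ge1}\subseteq\cB_{\ge1}$ and $\SLMone\subseteq\BM_{\ge1}$. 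Up to there your argument is sound, and you are right that moderate growth is never needed.

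The Roumieu case $\SrLMone$, however, has a genuine gap. You assume ``the given $\si_g$'': a single pair $(C_g,\si_g)$ with $(1+\|x\|)^N\|g^{(q)}(x)\|_{\Lin^q(E;F_B)}\le C_g\,\si_g^{N+q}\,N!\,q!\,L_N M_q$ for all $N,q,x$, i.e.\ boundedness in $F$ of the \emph{geometrically} weighted derivative sets of the plot $g$ for some $\si_g$. For a convenient intermediate space $F$ this is precisely the bornological condition (the $\ge1$-analogue of $g\in\SrLMb$), and it does \emph{not} follow from the definition of $\SrLMone(E,F)$, which only tests with functionals $\ell'\in F^*$, each supplying its own $\si(\ell')$: Lemma~\ref{AvsAb} upgrades the functional definition to the bornological one only under a Baire hypothesis on $F^*$, which is unavailable for a general convenient $F$. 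Nor does Proposition~\ref{sec:proj}, which you invoke, produce geometric bounds: applied to each $\ell'\o g$ it yields, for every $(r_k)\in\sR'$, boundedness in $F$ of the $(r_k2^k)$-weighted sets, hence estimates of the form $1/(r_{p+q}2^{p+q})$ in some $F_B$ depending on $(r_k)$ --- and $1/r_k$ eventually dominates every geometric sequence, so no $\si_g$ can be extracted. Accordingly the paper's Roumieu argument is structured differently: fix an arbitrary $(r_k)\in\sR'$, choose $B\in\sB(F)$ absorbing $\{g(0)\}\cup\set^{L,M}_{E,(r_k2^k),\ge1}(g)$, and run Fa\`a di Bruno with these $(r_k)$-bounds, using the submultiplicativity $r_{p+q}\le r_{\al_1}\cdots r_{\al_{j-1}}r_{p+\al_j}$ in place of the factorization of $\si_g^{p+q}$ (only $f$ keeps a geometric constant $\rh_f$, which is legitimate since $f\o i_B$ lives on the Banach space $F_B$); this shows that $\set^{L,M}_{E,(r_k\de^k),\ge1}(f\o g)$ is bounded for $\de=2(1+M_1\rh_f)^{-1}$, as in (\ref{ssec:Bcomp}.\ref{eq:Faacompute2}), and then $f\o g\in\SrLMone$ follows from the implication (3)$\Rightarrow$(1) of Proposition~\ref{sec:proj}. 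That reversed use of the projective description --- as the device converting $(r_k)$-bounds on the composite into a geometric bound, not merely as a tool for choosing $B$ --- is the step your proposal is missing.
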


\begin{proof}
Let $\cA \in \{\cS,\SLM\}$.
We must show that for all $B \in \sB(E)$ 
and for all $\ell \in G^*$ the composite
$\ell \o f \o g \o i_B : E_B \to \R$ belongs to $\cA_{\ge1}$.
  \[
  \xymatrix{
  E \ar[rr]^{g} && F \ar[rr]^f \ar[drr]_{\ell \o f} && G \ar[d]^{\ell} \\
  E_B \ar[u]^{i_B} \ar[urr]_{g \o i_B} &&&& \R
  }
  \]
Thus it suffices to show the assertions under the assumption that $E$ and $G$ are Banach spaces 
which we adopt for the rest of the proof.

\paragraph{\bf Case $\cA= \cS$}

  Since $\cS = \cSb$, for all $p,q \in \N$ the set 
  $\{g(0)\} \cup \bigcup_{1 \le \ell \le q} \set^{(0,\ell)}_{E}(g)$  
  is bounded in $F$ and hence contained in some $B_{q} \in \sB(F)$.
By Fa\`a di Bruno's formula, 
we find for $q\ge 1$,   
\begin{align} \label{eq:FaaS} 
\begin{split}
  &\frac{\|(f\o g)^{(q)}(x)\|_{\Lin^q(E;G)}}{q!} \le \\
&\le \sum_{j\ge 1} \sum_{\substack{\al\in \N_{>0}^j\\ \al_1+\dots+\al_j =q}}
\frac{\|f^{(j)}(g(x))\|_{\Lin^j(F_{B_{q}};G)}}{j!}\;\prod_{i=1}^j\;
\frac{\|g^{(\al_i)}(x)\|_{\Lin^{\al_i}(E;F_{B_{q}})}}{\al_i!}.
\end{split}
\end{align}
Since $g' : E \to \Lin(E;F_{B_{q}})$ we have $g(E) \subseteq F_{B_{q}}$ (by integration as $g(0) \in B_{q}$). %this is not necessary if we assume $g \in \cB$
Multiplying both sides with $(1+\|x\|_E)^p$ and  
taking the supremum over $x \in E$, we deduce 
\begin{align*}
  \|f\o g\|^{(p,q)}_{E} \le q! \sum_j \sum_\al \frac{\|f\|^{(0,j)}_{F_{B_{q}}}}{j!} 
  \prod_{i=1}^{j-1} \frac{\|g\|^{(0,\al_i)}_{E}}{\al_i!} \frac{\|g\|^{(p,\al_j)}_{E}}{\al_j!} < \infty 
\end{align*}
for each $p \in \N$, $q \in \N_{\ge1}$. 

\paragraph{\bf Case $\cA= \SbLM$}

  Since $\SbLM = \SbLMb$, for all $\rh>0$ the set $\set^{M}_{E,\rh,\ge1}(g)$ (defined in (\ref{ssec:Bcomp}.\ref{eq:setM1}))
  is bounded in $F$ and hence $\{g(0)\} \cup \set^{M}_{E,\rh,\ge1}(g)$ is contained in some $B \in \sB(F)$.
By \eqref{eq:FaaS} (with $B_{q}$ replaced by $B$), \thetag{\ref{ssec:ws}.\ref{eq:FdB}}, and since again $g(E) \subseteq F_B$, 
we find    
\begin{align*}
    &\frac{(1+\|x\|_E)^p \|(f\o g)^{(q)}(x)\|_{\Lin^q(E;G)}}{p! q!\, L_p M_q} \le \\
&\le \sum_{j\ge 1} M_1^j \sum_{\substack{\al\in \N_{>0}^j\\ \al_1+\dots+\al_j =q}}
\frac{\|f^{(j)}(g(x))\|_{\Lin^j(F_{B};G)}}{j!\, M_j}\;\prod_{i=1}^{j-1}\;
\frac{\|g^{(\al_i)}(x)\|_{\Lin^{\al_i}(E;F_{B})}}{\al_i!\, M_{\al_i}} \\
& \hspace{6cm} \times 
\frac{(1+\|x\|_E)^p \|g^{(\al_j)}(x)\|_{\Lin^{\al_j}(E;F_{B})}}{p! \al_j!\, L_p M_{\al_j}} 
\\
%& \le \sum_{j\ge 1} M_1^j \sum_{\substack{\al\in \N_{>0}^j\\ \al_1+\dots+\al_j =q}}
%C_f \rh_f^j\, 
%\Big(\prod_{i=1}^{j-1}\;
%C_g \rh_g^{\al_i}  \Big)\, 
%C_g \rh_g^{p+\al_j} \\
%&= M_1 C_f C_g \rh_f \rh_g^{p+q}  \sum_{j\ge 1} \binom{q-1}{j-1} (M_1 \rh_f C_g )^{j-1} \\
&\le M_1 C_f C_g \rh_f \rh_g^{p+q} (1+M_1 \rh_f C_g )^{q-1}, 
\end{align*}
by the computation in (\ref{ssec:Bcomp}.\ref{eq:Faacompute}).
Given $\rh>0$ choose $\si>0$ so that $\rh = \sqrt \si + \si$ and set $\rh_g = \sqrt \si$ and 
$\rh_f = (C_g  M_1)^{-1} \sqrt \si$. Then 
\[
 \sup_{x \in E, p\in \N, q \in \N_{\ge 1}} \frac{(1+\|x\|_E)^p \|(f\o g)^{(q)}(x)\|_{\Lin^q(E;G)}}{\rh^{p+q}\, p! q!\, L_p M_q} < \infty.
\]

\paragraph{\bf Case $\cA= \SrLM$}

Fix a sequence $(r_k) \in \sR'$. By Proposition \ref{sec:proj} the set  
\begin{multline*}
  \set^{L,M}_{E,(r_k\, 2^k),\ge1}(\ell \o g) := \Big\{r_{p+q}\, 2^{p+q} \frac{(1+\|x\|_E)^p\, (\ell \o g)^{(q)}(x)(v_1,\dots,v_q)}{p! q!\, L_p M_q}  : 
  \\
  p \in \N, q\in \N_{\ge1},x\in E,\|v_i\|_E\leq 1\Big\}
\end{multline*}
is bounded 
in $\R$ for each $\ell \in F^*$. Thus the set $\{g(0)\} \cup \set^{L,M}_{E,(r_k\, 2^k),\ge 1}(g)$ is contained in some $B \in \sB(F)$ and 
so, for $p \in \N$, $q\in \N_{\ge1}$, and $x \in E$, 
\[
\frac{(1+\|x\|_E)^p\,\|g^{(q)}(x)\|_{\Lin^q(E;F_B)} \, r_{p+q}}{p!q!\, L_p M_q}
\leq \frac1{2^{p+q}}. 
\]
Fa\`a di Bruno's formula \eqref{eq:FaaS}, \thetag{\ref{ssec:ws}.\ref{eq:FdB}}, and $g(E) \subseteq F_B$ (as before) then give
\begin{align*}
    &\frac{(1+\|x\|_E)^p \|(f\o g)^{(q)}(x)\|_{\Lin^q(E;G)}}{p! q!\, L_p M_q} r_{p+q} \le \\
&\le \sum_{j\ge 1} M_1^j \sum_{\substack{\al\in \N_{>0}^j\\ \al_1+\dots+\al_j =q}}
\frac{\|f^{(j)}(g(x))\|_{\Lin^j(F_{B};G)}}{j!\, M_j}\;\prod_{i=1}^{j-1}\;
\frac{\|g^{(\al_i)}(x)\|_{\Lin^{\al_i}(E;F_{B})}\, r_{\al_i}}{\al_i!\, M_{\al_i}} \\
& \hspace{6cm} \times 
\frac{(1+\|x\|_E)^p \|g^{(\al_j)}(x)\|_{\Lin^{\al_j}(E;F_{B})}\, r_{p+\al_i}}{p! \al_j!\, L_p M_{\al_j}} 
\\
%& \le \sum_{j\ge 1} M_1^j \sum_{\substack{\al\in \N_{>0}^j\\ \al_1+\dots+\al_j =q}}
%C_f \rh_f^j\, 
%\frac1{2^{p+q}} \\
%&= \frac{M_1 C_f \rh_f}{2^{p+q}}  \sum_{j\ge 1} \binom{q-1}{j-1} (M_1 \rh_f)^{j-1} 
%=  \frac{M_1 C_f \rh_f}{1+ M_1 \rh_f} \Big(\frac{1+ M_1 \rh_f}{2}\Big)^{q} \frac{1}{2^p}\\
&\le \frac{M_1 C_f \rh_f}{1+ M_1 \rh_f} \Big(\frac{1+ M_1 \rh_f}{2}\Big)^{p+q}, 
\end{align*}
by the computation in (\ref{ssec:Bcomp}.\ref{eq:Faacompute2}).
Thus $\set^{L,M}_{E,(r_k \de^k),1}(f\o g)$ for $\de = 2 (1+M_1\rh_f)^{-1}$ is bounded in $G$. Proposition \ref{sec:proj}  
implies that $f\o g \in \SrLMone(E,G)$; 
it suffices to take $a_{p,q}$  as defined in (\ref{sec:proj}.\ref{eq:projS}) for $q \ge1$, set $a_{p,0}:=0$, and apply Lemma \ref{sec:proj}.
\end{proof}

%\begin{remark*}
%  The $\cS_{\ge2}$-mappings (with the obvious meaning) between convenient vector spaces form a category. If $M=(M_k)$ is log-convex, 
%  then also $\tensor{\cS}{}_{[L],\ge2}^{[M]}$-mappings between convenient vector spaces form a category. Neither of them is 
%  cartesian closed. CHECK
%\end{remark*}

In Section \ref{sec:diff} we also need the following result.

\begin{proposition*}[{\cite[Thm 6.3]{KrieglMichorRainer14a}}] 
  Let $M=(M_k)$ and $L=(L_k)$ be weight sequences and assume that $M$ is log-convex.
  If $g : \R^n \to \R^n$ is a $C^\infty$-diffeomorphism satisfying $g(x) = x + o(x)$ as $|x| \to \infty$, we have the following implications: 
  \begin{align*}
    f \in  \cS(\R^n,\R^n),~ g \in \cB_{\ge1}(\R^n,\R^n) &\Longrightarrow  f \o g \in \cS(\R^n,\R^n), \\
    f \in  \SLM(\R^n,\R^n),~ g \in \BM_{\ge1}(\R^n,\R^n) &\Longrightarrow  f \o g \in \SLM(\R^n,\R^n). 
  \end{align*}
\end{proposition*}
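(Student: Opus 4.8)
The plan is to reduce everything to pointwise estimates on $\R^n$ and to transport the decay of the \emph{outer} factor $f$ back to the variable $x$ by means of the hypothesis $g(x)=x+o(x)$; the \emph{inner} factor $g$ will contribute only its higher derivatives, which are controlled by $g\in\cB_{\ge1}$ (resp.\ $g\in\BM_{\ge1}$). Since source and target are finite dimensional, no convenient-structure reduction is needed, and writing $f=(f_1,\dots,f_n)$ we may assume $f:\R^n\to\R$ is scalar valued; then $f\circ g$ is clearly $C^\infty$, and only the smoothness of $g$ together with its two structural properties (not the diffeomorphism property) enter.

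First I would record the elementary geometric lemma that $g(x)=x+o(x)$ produces a constant $C_0\ge1$ with $1+\|x\|\le C_0\,(1+\|g(x)\|)$ for all $x\in\R^n$. Indeed $\|g(x)\|\ge\|x\|-\|g(x)-x\|\ge\tfrac12\|x\|$ once $\|x\|$ is large, whence $1+\|x\|\le2(1+\|g(x)\|)$ there, while on the complementary bounded set $1+\|x\|$ is bounded and $1+\|g(x)\|\ge1$; taking the maximum gives $C_0$. This is the only point at which the assumption $g(x)=x+o(x)$ is used, and it is precisely what converts decay of $f$ in the argument $g(x)$ into decay in $x$.

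For the $\cS$ case I would bound the Schwartz seminorms $\|f\circ g\|^{(k,\ell)}_{\R^n}$. For $\ell=0$ one has $(1+\|x\|)^k|f(g(x))|\le C_0^k(1+\|g(x)\|)^k|f(g(x))|\le C_0^k\,\|f\|^{(k,0)}_{\R^n}<\infty$. For $\ell\ge1$ I would apply Fa\`a di Bruno's formula \eqref{eq:FaaS}, multiply by $(1+\|x\|)^k$, use $(1+\|x\|)^k\le C_0^k(1+\|g(x)\|)^k$ to absorb the weight into $f$ through $(1+\|g(x)\|)^k\|f^{(j)}(g(x))\|\le\|f\|^{(k,j)}_{\R^n}$, and bound each inner factor $\|g^{(\al_i)}(x)\|$ (with $\al_i\ge1$) by the $\cB_{\ge1}$-seminorms of $g$. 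As $j$ runs only through $1,\dots,\ell$, this is a finite sum of finite terms, so $\|f\circ g\|^{(k,\ell)}_{\R^n}<\infty$ for all $k,\ell$, i.e.\ $f\circ g\in\cS$.

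The $\SLM$ case runs the same two steps with the weights tracked, following the computation in (\ref{ssec:Bcomp}.\ref{eq:Faacompute}): the higher derivatives of $g$ furnish the $M$-bounds, log-convexity of $M$ via \thetag{\ref{ssec:ws}.\ref{eq:FdB}} collapses the sum over $\al$ into a geometric series as there, and now the outer factor $f^{(j)}(g(x))\in\SLM$ supplies the $L$-decay in the index $k$, after the substitution $(1+\|x\|)^k\le C_0^k(1+\|g(x)\|)^k$. The only new feature is the spurious factor $C_0^k$, and I expect the main obstacle to be the bookkeeping that absorbs it into the free decay parameter $\rh$, which is harmless precisely because of the extra room available: in the Roumieu case one enlarges $\rh$ (replacing $\rh_f$ by $\rh_f/C_0$ in the parameter choice of the preceding theorem and finishing via the $\sR'$-description of Proposition \ref{sec:proj}), and in the Beurling case one uses that $\|f\|^{L,M}_{\R^n,\rh'}$ is finite for every $\rh'$, in particular for $\rh'=\rh/C_0$. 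This gives $f\circ g\in\SLM$.
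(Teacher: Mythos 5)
Your proposal is correct, but note that the paper itself gives no proof of this proposition: it is quoted from \cite[Thm 6.3]{KrieglMichorRainer14a}, so there is no internal argument to compare against, and what you have produced is a self-contained proof that stays entirely inside the toolkit the paper develops elsewhere. Your key steps are all sound: the growth-transport lemma (there is $C_0\ge 1$ with $1+\|x\|\le C_0(1+\|g(x)\|)$, which is indeed the only place where $g(x)=x+o(x)$ enters), Fa\`a di Bruno in the form \eqref{eq:FaaS} with the weight $(1+\|x\|)^k$ absorbed into the outer seminorm via $(1+\|g(x)\|)^k\|f^{(j)}(g(x))\|\le\|f\|^{(k,j)}_{\R^n}$, the log-convexity estimate \thetag{\ref{ssec:ws}.\ref{eq:FdB}} giving $M_j M_{\al_1}\cdots M_{\al_j}\le M_1^j M_\ell$, and the geometric-series bookkeeping of (\ref{ssec:Bcomp}.\ref{eq:Faacompute}). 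The estimates close as you claim; one phrasing quibble: in the Roumieu case $\rh_f$ is \emph{given} by $f$, not chosen, so ``replacing $\rh_f$ by $\rh_f/C_0$'' is not quite the right description — what actually happens is simply that the final parameter may be taken as large as needed, e.g.\ $\rh=\max\{C_0\rh_f,\ \rh_g(1+M_1C_g\rh_f)\}$, and in finite dimensions the $\sR'$-description of Proposition \ref{sec:proj} is not needed at all; in the Beurling case your device of invoking $\|f\|^{L,M}_{\R^n,\rh/C_0}<\infty$, combined with the parameter choice from the Beurling computation in Proposition \ref{ssec:Bcomp}, works verbatim. You also correctly observe that the diffeomorphism hypothesis on $g$ is never used in this implication — only $g\in\cB_{\ge1}$ (resp.\ $\BM_{\ge1}$) and the growth condition enter; that hypothesis is carried along because of the intended application in Section \ref{sec:diff}.
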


%-----------------------------------------------------------------------------------------------------------------------
\subsection{The cases $\Sp$ and $\SpM$} \label{ssec:Wcomp}
%-----------------------------------------------------------------------------------------------------------------------

We set 
\begin{align*}
  \Sp_{\ge1}(\R^m) &:= \Big\{f \in C^\infty(\R^m) : \|f^{(\al)}\|_{L^p(\R^m)} < \infty \text{ for all } \al \in \N^m, |\al|\ge 1\Big\}, \\
  \SpbM_{\ge1}(\R^m) &:=  \Big\{f \in C^\infty(\R^m) :  
    ~\forall \si>0 \sup_{\al \in \N^m, |\al|\ge 1} \frac{\|f^{(\al)}\|_{L^p(\R^m)}}{\si^{|\al|}\,|\al|!\, M_{|\al|}} <\infty \Big\}, \\ 
  \SprM_{\ge1}(\R^m) &:=  \Big\{f \in C^\infty(\R^m) :  
    ~\exists \si>0 \sup_{\al \in \N^m, |\al|\ge 1} \frac{\|f^{(\al)}\|_{L^p(\R^m)}}{\si^{|\al|}\,|\al|!\, M_{|\al|}} <\infty \Big\},
\end{align*}
and 
\[
  \Sp_{\ge1}(\R^m,\R^n) := (\Sp_{\ge1}(\R^m))^n, \qquad \SpM_{\ge1}(\R^m,\R^n) := (\SpM_{\ge1}(\R^m))^n. 
\]

\begin{theorem*} 
  Let $M=(M_k)$ be a log-convex derivation closed weight sequence. Then:
  \begin{align*}
    f \in  \cB_{\ge1}(\R^m,\R^n),~ g \in \Sp_{\ge1}(\R^\ell,\R^m) &\Longrightarrow  f \o g \in \Sp_{\ge1}(\R^\ell,\R^n), \\
    f \in  \BM_{\ge1}(\R^m,\R^n),~ g \in \SpM_{\ge1}(\R^\ell,\R^m) &\Longrightarrow  f \o g \in \SpM_{\ge1}(\R^\ell,\R^n). 
  \end{align*}
\end{theorem*}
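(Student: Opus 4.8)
The plan is to run Fa\`a di Bruno's formula \eqref{eq:Faa} in the $L^p$-norm, reusing the combinatorial estimates already carried out for the $\cS$ and $\SLM$ cases in Subsection \ref{ssec:Scomp}. The only genuinely new ingredient is that in each Fa\`a di Bruno summand we estimate one factor in $L^p$ and the remaining factors in $L^\infty$ via H\"older's inequality. First I would reduce to $n=1$: since $\Sp_{\ge1}(\R^m,\R^n)=(\Sp_{\ge1}(\R^m))^n$ and likewise for $\SpM_{\ge1}$, and since $(f\o g)_i=f_i\o g$ with $f_i\in \cB_{\ge1}(\R^m,\R)$ (resp.\ $\BM_{\ge1}(\R^m,\R)$), it suffices to treat scalar-valued $f$. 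Throughout I pass freely between the Fr\'echet derivative $f^{(k)}$ of order $k$ and the partial derivatives $\p^\al f$ with $|\al|=k$, which agree up to dimensional constants.

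The crucial preliminary step is a Sobolev embedding for $g$. Because $g\in\Sp_{\ge1}(\R^\ell,\R^m)$, every derivative $g^{(\al)}$ with $|\al|\ge1$ lies in $W^{s,p}(\R^\ell)$ for all $s$ (indeed $g^{(\al+\ga)}\in L^p$ for every $\ga$, as $|\al+\ga|\ge1$), so the Sobolev inequality (cf.\ Proposition \ref{prop:incl2}) gives $g^{(\al)}\in L^\infty$ for all $|\al|\ge1$; hence $g\in\cB_{\ge1}(\R^\ell,\R^m)$. In the Denjoy--Carleman case $g\in\SpM_{\ge1}$ I would combine the Sobolev estimate $\|g^{(\al)}\|_{L^\infty}\le C\sum_{|\ga|\le s}\|g^{(\al+\ga)}\|_{L^p}$ with the derivation-closedness inequality \eqref{eq:dc1}; since $|\ga|\le s$ is bounded, the factor $C^{|\ga|(|\al|+|\ga|)}$ in \eqref{eq:dc1} only enlarges the parameter, and one obtains $\|g^{(\al)}\|_{L^\infty}\le C'(\si')^{|\al|}\,|\al|!\,M_{|\al|}$ for $|\al|\ge1$, i.e.\ $g\in\BM_{\ge1}(\R^\ell,\R^m)$ (in both the Beurling and Roumieu senses).

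Next I would apply Fa\`a di Bruno's formula \eqref{eq:Faa}. For $q\ge1$ and each admissible $\al\in\N_{>0}^j$ with $\al_1+\dots+\al_j=q$ the corresponding summand is $f^{(j)}(g(x))\,(g^{(\al_1)}(x),\dots,g^{(\al_j)}(x))$, and here every occurring order satisfies $j\ge1$, so $f^{(j)}(g(x))$ is bounded by $\|f\|^{(j)}_{\R^m}<\infty$ uniformly in $x$ --- this is exactly where the restriction ``from the first derivative onwards'' is used, since $f$ itself need not be bounded. Applying H\"older's inequality, putting the last factor in $L^p$ and the remaining $j-1$ factors in $L^\infty$, I get
\[
  \big\|f^{(j)}(g(\cdot))\,(g^{(\al_1)}(\cdot),\dots,g^{(\al_j)}(\cdot))\big\|_{L^p(\R^\ell)}
  \le \|f\|^{(j)}_{\R^m}\;\|g^{(\al_j)}\|_{L^p(\R^\ell)}\prod_{i=1}^{j-1}\|g\|^{(\al_i)}_{\R^\ell},
\]
which is the $L^p$-analogue of the splitting in \eqref{eq:FaaS}, the weight $(1+\|x\|)^p$ there being replaced by the role of carrying the $L^p$-norm here. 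For $\cA=\Sp$ the sum over $j$ and $\al$ is finite for each fixed $q$, so $\|(f\o g)^{(q)}\|_{L^p}<\infty$ for all $q\ge1$ and $f\o g\in\Sp_{\ge1}$. For $\cA=\SpbM$ I would insert the weights $M_{|\al_i|}$, invoke weak log-convexity \eqref{eq:FdB} in the form $M_1^jM_q\ge M_jM_{\al_1}\cdots M_{\al_j}$, and reduce to the geometric series already summed in (\ref{ssec:Bcomp}.\ref{eq:Faacompute}) (with the constant $C_g$ of the last factor now being the $L^p$-constant); choosing $\rho_g,\rho_f$ as there yields finiteness for every prescribed $\rho$, hence $f\o g\in\SpbM_{\ge1}$. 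For the Roumieu case $\SprM$ I would fix $(r_k)\in\sR'$, use the projective description (Lemma \ref{lem:AEW}, the $L^p$-version of Proposition \ref{sec:proj}) to convert the hypotheses on $f$ and $g$ into boundedness of the associated weighted families, and repeat the computation of (\ref{ssec:Bcomp}.\ref{eq:Faacompute2}) with $L^p$-norms; Lemma \ref{lem:AEW} then gives $f\o g\in\SprM_{\ge1}$.

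The main obstacle, and the only point that is not a transcription of Subsection \ref{ssec:Scomp}, is the Sobolev-embedding step: I must know that the intermediate factors $g^{(\al_i)}$ are uniformly bounded --- with the correct $M$-growth in the Denjoy--Carleman case --- so that H\"older's inequality leaves exactly one $L^p$-factor per term. Once the inclusions $\Sp_{\ge1}\hookrightarrow\cB_{\ge1}$ and $\SpM_{\ge1}\hookrightarrow\BM_{\ge1}$ are established, the remaining combinatorics are identical to those already done for $\cS$ and $\SLM$.
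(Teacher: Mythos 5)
Your proposal is correct and follows essentially the same route as the paper: Fa\`a di Bruno with one factor estimated in $L^p$ and all others in $L^\infty$, justified by the Sobolev embeddings $\Sp_{\ge1}\subseteq\cB_{\ge1}$ and (via derivation closedness and \thetag{\ref{ssec:ws}.\ref{eq:dc1}}) $\SpM_{\ge1}\subseteq\BM_{\ge1}$, after which the combinatorics of Proposition \ref{ssec:Bcomp} finish the argument. The only differences are cosmetic: the paper reduces to $\ell=m=n=1$ and writes out the embedding bound $\|g^{(j)}\|_{L^\infty(\R)}\le \tilde C_g\,\tilde\rh_g^{\,j}\,j!\,M_j$ in one line, whereas you keep the dimensions general and spell out the Beurling and Roumieu bookkeeping explicitly.
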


\begin{proof} 
	For simplicity we assume that $\ell=m =n=1$; the general case will follow by the same arguments from the 
	Fa\`a di Bruno's formula for partial derivatives.  			
	For $h\ge 1$,
  \begin{align*}
    \frac{\|(f \o g)^{(h)}\|_{L^p(\R)}}{h!} &\le \sum_{j \ge 1} \sum_{\substack{\al \in \N_{>0}^j\\\sum_i \al_i=h}}
    \frac{\|f^{(j)}\|_{L^\infty(\R)}}{j!} \frac{\|g^{(\al_1)}\|_{L^p(\R)}}{\al_1!} \prod_{i=2}^{j} \frac{\|g^{(\al_i)}\|_{L^\infty(\R)}}{\al_i!}  
  \end{align*}
	since $\Sp_{\ge1}(\R) \subseteq \cB_{\ge1}(\R)$. This shows the first part of the theorem.

  For the second part we may argue as follows. 
  By the general Sobolev inequalities there exist $k \in \N_{\ge1}$ a constant $C>0$, both depending only on $p$, so that  
  \[
    \|g^{(j)}\|_{L^\infty(\R)} \le C \|g^{(j)}\|_{W^{k,p}(\R)}.
  \]   
  Using that $M=(M_k)$ is derivation closed and thus (\ref{ssec:ws}.\ref{eq:dc1}), we further have 
  \begin{align*}
    \|g^{(j)}\|_{W^{k,p}(\R)} &= \sum_{i=0}^k \|g^{(j+i)}\|_{L^p(\R)}
    \le C_g \sum_{i=0}^k \rh_g^{j+i} (j+i)! M_{j+i}  
    \le \tilde C_g \tilde \rh_g^j  j! M_{j}. 
  \end{align*}
  This permits to conclude the proof in the same way as the one of Proposition~\ref{ssec:Bcomp}.
\end{proof}

In Section \ref{sec:diff} we also need the following result.

\begin{proposition*}[{\cite[Thm 6.2]{KrieglMichorRainer14a}}] 
  Let $M=(M_k)$ be a log-convex weight sequence. 
  If $g : \R^n \to \R^n$ is a $C^\infty$-diffeomorphism satisfying $\inf_{x\in \R^n} |\det dg(x)|>0$, we have the following implications: 
  \begin{align*}
    f \in  \Sp(\R^n,\R^n),~ g \in \cB_{\ge1}(\R^n,\R^n) &\Longrightarrow  f \o g \in \Sp(\R^n,\R^n), \\
    f \in  \SpM(\R^n,\R^n),~ g \in \BM_{\ge1}(\R^n,\R^n) &\Longrightarrow  f \o g \in \SpM(\R^n,\R^n). 
  \end{align*}
\end{proposition*}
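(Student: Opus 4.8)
The plan is to combine Fa\`a di Bruno's formula \eqref{eq:Faa} with a single change of variables governed by the diffeomorphism $g$. After reducing to scalar-valued $f$ componentwise (recall $\Sp(\R^n,\R^n) = (\Sp(\R^n))^n$, and likewise for $\SpM$), the one elementary ingredient needed is a substitution estimate: since $g : \R^n \to \R^n$ is a $C^\infty$-diffeomorphism with $c := \inf_{x} |\det dg(x)| > 0$, the substitution $y = g(x)$ gives, for every $h \in L^p(\R^n)$,
\[
  \int_{\R^n} |h(g(x))|^p \, dx = \int_{\R^n} |h(y)|^p\, |\det dg(g^{-1}(y))|^{-1}\, dy \le c^{-1}\, \|h\|_{L^p(\R^n)}^p,
\]
that is $\|h \o g\|_{L^p(\R^n)} \le c^{-1/p}\,\|h\|_{L^p(\R^n)}$; surjectivity of $g$ is what lets the right-hand integral run over all of $\R^n$.

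For $\cA = \Sp$ I would proceed as follows. By \eqref{eq:Faa} (with $E = G = \R^n$ and $F = \R$), for each order $h \ge 1$ the quantity $\|(f \o g)^{(h)}(x)\|$ is bounded by a finite sum of terms $\|f^{(j)}(g(x))\| \prod_{i} \|g^{(\al_i)}(x)\|$ with $j \ge 1$, $\al_i \ge 1$, and $\sum_i \al_i = h$. Each factor $\|g^{(\al_i)}(x)\|$ is bounded by $\|g\|^{(\al_i)}_{\R^n} < \infty$ because $g \in \cB_{\ge 1}$, so the product is globally bounded; applying the substitution estimate to the scalar function $x \mapsto \|f^{(j)}(g(x))\|$ --- legitimate since $f \in \Sp$ makes $\|f^{(j)}\|$ lie in $L^p$ for every $j \ge 1$ --- shows each term lies in $L^p(\R^n)$. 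Hence $\|(f \o g)^{(h)}\|_{L^p(\R^n)} < \infty$ for all $h \ge 1$, while the order-$0$ norm satisfies $\|f \o g\|_{L^p(\R^n)} \le c^{-1/p}\|f\|_{L^p(\R^n)} < \infty$ by the substitution estimate applied to $f$ itself. Thus $f \o g \in \Sp(\R^n,\R^n)$.

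For $\cA = \SpM$ the scheme is identical, but I would track the Denjoy--Carleman weights exactly as in Subsection \ref{ssec:Bcomp}. Dividing \eqref{eq:Faa} further by $M_h$ and inserting the log-convexity inequality \eqref{eq:FdB}, every summand takes the shape $M_1^j\, \frac{\|f^{(j)}(g(x))\|}{j!\,M_j} \prod_i \frac{\|g^{(\al_i)}(x)\|}{\al_i!\,M_{\al_i}}$. Taking the $L^p$-norm in $x$, I bound the inner factors in $L^\infty$ using $g \in \BM_{\ge 1}$ (so $\|g^{(\al_i)}\|_{L^\infty} \le C_g\, \rho_g^{\al_i}\, \al_i!\,M_{\al_i}$) and the single outer factor in $L^p$ by the substitution estimate together with $f \in \SpM$ (so $\|f^{(j)}(g(\cdot))\|_{L^p} \le c^{-1/p}\|f^{(j)}\|_{L^p} \le c^{-1/p} C_f\, \rho_f^j\, j!\,M_j$). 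Summing over the $\binom{h-1}{j-1}$ compositions and then over $j$ reproduces the geometric estimate \eqref{eq:Faacompute}, giving $\|(f \o g)^{(h)}\|_{L^p(\R^n)} \le C\, \rho^{h}\, h!\,M_h$ with $\rho = \rho_g(1 + M_1 \rho_f C_g)$; the order-$0$ term is again controlled by the substitution estimate. In the Roumieu reading this exhibits a single admissible $\rho$ from the given $\rho_f, \rho_g$, and in the Beurling reading $\rho$ can be made arbitrarily small since $\rho_f, \rho_g$ are then arbitrary, so $f \o g \in \SpM(\R^n,\R^n)$ in both interpretations of $[M]$.

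The only genuine subtlety --- and the reason log-convexity alone suffices here, in contrast to the derivation-closedness required by the preceding $\SpM_{\ge1}$-composition theorem --- is that the outer function $f$ already supplies $L^p$-control at every order. Consequently each Fa\`a di Bruno term contains exactly one factor ($f^{(j)} \o g$) that must be tested in $L^p$, while all inner derivatives of $g$ remain in $L^\infty$; no Sobolev embedding is invoked to trade an $L^p$-norm against a higher one. The single point to verify carefully is the uniformity of the substitution constant $c^{-1/p}$ across all orders $h$, which is precisely what the hypothesis $\inf_x |\det dg(x)| > 0$ guarantees.
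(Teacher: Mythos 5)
Your proof is correct. Note that this paper contains no proof to compare against: the proposition is quoted from \cite[Thm 6.2]{KrieglMichorRainer14a} without an internal argument. Your route is the natural one and meshes exactly with the composition machinery of Section~\ref{sec:comp}: Fa\`a di Bruno together with the log-convexity inequality (\ref{ssec:ws}.\ref{eq:FdB}) reproduces the computation (\ref{ssec:Bcomp}.\ref{eq:Faacompute}), and the one ingredient not already in the paper, the change-of-variables estimate $\|h \o g\|_{L^p(\R^n)} \le c^{-1/p}\|h\|_{L^p(\R^n)}$ with $c = \inf_x |\det dg(x)|>0$, is exactly what the determinant hypothesis provides; it lets you test the unique outer factor $\|f^{(j)}(g(\cdot))\|$ in $L^p$, while every inner factor $g^{(\al_i)}$, $\al_i\ge1$, is bounded in $L^\infty$ because $g \in \cB_{\ge1}$ resp.\ $\BM_{\ge1}$. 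Your structural remark is also accurate: since no Sobolev embedding is needed to convert $L^p$ bounds on inner derivatives into $L^\infty$ bounds, derivation closedness can be dropped, in contrast to the theorem of Subsection~\ref{ssec:Wcomp}. Two minor points worth making explicit in a full write-up: passing between the partial-derivative norms defining $\SpM$ and Fr\'echet-derivative operator norms costs a factor $n^j$, which must be (and harmlessly is) absorbed into $\rho_f$; and in the Beurling case the parameters must be chosen in the order $\rho_g$ first, then $C_g=C_g(\rho_g)$, then $\rho_f$ depending on $C_g$, exactly as in the proof of Proposition~\ref{ssec:Bcomp}, since the constants there depend on the chosen radii.
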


%-----------------------------------------------------------------------------------------------------------------------
\subsection{The cases $\cD$ and $\DM$} \label{ssec:Dcomp}
%-----------------------------------------------------------------------------------------------------------------------

We define 
\begin{align*}
  \cD_{\ge 1}(\R^m,\R^n) &:= \{f \in C^\infty(\R^m,\R^n) : f^{(\al)} \in \cD(\R^m,\R^n) \text{ for all } \al \in \N^m, |\al|\ge 1\}, \\
  \DM_{\ge 1}(\R^m,\R^n) &:= \cD_{\ge 1}(\R^m,\R^n) \cap \BM_{\ge 1}(\R^m,\R^n).
\end{align*}

\begin{theorem*} 
  Let $M=(M_k)$ be a log-convex weight sequence. Then:
  \begin{align*}
    f \in  \cB_{\ge1}(\R^m,\R^n),~ g \in \cD_{\ge1}(\R^\ell,\R^m) &\Longrightarrow  f \o g \in \cD_{\ge1}(\R^\ell,\R^n), \\
    f \in  \BM_{\ge1}(\R^m,\R^n),~ g \in \DM_{\ge1}(\R^\ell,\R^m) &\Longrightarrow  f \o g \in \DM_{\ge1}(\R^\ell,\R^n). 
  \end{align*}
\end{theorem*}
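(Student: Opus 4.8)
The plan is to reduce the statement to two facts that are essentially already available: first, that all derivatives of $f \o g$ of order $\ge 1$ are \emph{compactly supported}, which takes care of the $\cD_{\ge1}$-membership; and second, for the Denjoy--Carleman case, that $f \o g \in \BM_{\ge1}$, which is exactly the content of the Corollary in Subsection~\ref{ssec:Bcomp}. Since $\DM_{\ge1} = \cD_{\ge1} \cap \BM_{\ge1}$ by definition, combining the two facts gives $f \o g \in \DM_{\ge1}$, while the first fact alone gives the $\cB_{\ge1}/\cD_{\ge1}$ implication. Throughout, $f \o g$ is $C^\infty$ as a composite of smooth maps, so only the support and decay conditions remain to be checked.

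For the support statement I would argue as follows. By hypothesis $g \in \cD_{\ge1}(\R^\ell,\R^m)$, so in particular $dg = g^{(1)}$ has compact support; set $K := \on{supp}(dg)$, a compact subset of $\R^\ell$. On the open set $\R^\ell \setminus K$ every first partial $\p_i g$ vanishes, hence so do all its further derivatives, and therefore $g^{(\al)}$ vanishes on $\R^\ell \setminus K$ for \emph{every} multi-index with $|\al| \ge 1$; thus $\on{supp}(g^{(\al)}) \subseteq K$ for all such $\al$. Now apply Fa\`a di Bruno's formula \eqref{eq:Faa}: for $|\al| \ge 1$, each summand of $(f \o g)^{(\al)}(x)$ is a product of a derivative of $f$ evaluated at $g(x)$ with factors $g^{(\be_i)}(x)$, where every $|\be_i| \ge 1$. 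Each summand therefore vanishes for $x \notin K$, so $(f \o g)^{(\al)}$ is supported in $K$ and, being smooth, lies in $\cD(\R^\ell,\R^n)$. This proves $f \o g \in \cD_{\ge1}(\R^\ell,\R^n)$, settling the first implication (note that only smoothness of $f$, not the $\cB_{\ge1}$ bounds, is used here).

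For the second implication it remains to verify $f \o g \in \BM_{\ge1}(\R^\ell,\R^n)$. Here I would simply invoke the Corollary in Subsection~\ref{ssec:Bcomp} with the convex set $V = \R^m$: since $g \in \DM_{\ge1}(\R^\ell,\R^m) \subseteq \BM_{\ge1}(\R^\ell,\R^m)$ and $f \in \BM_{\ge1}(\R^m,\R^n)$, that corollary (valid because $M=(M_k)$ is log-convex) yields $f \o g \in \BM_{\ge1}(\R^\ell,\R^n)$. Together with the support result this gives $f \o g \in \cD_{\ge1}(\R^\ell,\R^n) \cap \BM_{\ge1}(\R^\ell,\R^n) = \DM_{\ge1}(\R^\ell,\R^n)$.

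I do not expect a serious obstacle: the whole argument rests on the observation that membership in $\cD_{\ge1}$ forces the first derivative of $g$ to vanish off a compact set, which propagates to all higher derivatives and hence, through the chain rule, to all derivatives of the composite. The only points needing a little care are the elementary claim that $g^{(\al)}$ is supported in $\on{supp}(dg)$ for all $|\al|\ge 1$, and checking that the hypotheses of the Corollary in Subsection~\ref{ssec:Bcomp} (in particular convexity of the intermediate space, here trivially $\R^m$) are met, so that the $\BM_{\ge1}$ half may be quoted rather than recomputed via Fa\`a di Bruno.
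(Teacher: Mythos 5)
Your proof is correct and follows essentially the same route as the paper: the paper's proof also invokes the Corollary of Subsection~\ref{ssec:Bcomp} for the $\cB_{\ge1}$/$\BM_{\ge1}$ bounds and then notes that the support condition "follows easily from the chain rule," which is exactly the Fa\`a di Bruno support argument you spell out. Your additional observations (that $g^{(\al)}$ is supported in $\on{supp}(dg)$ for all $|\al|\ge1$, and that the first implication needs only smoothness of $f$) are correct refinements of what the paper leaves implicit.
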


\begin{proof}
  By Corollary \ref{ssec:Bcomp}, only the condition on the support must be checked. It follows 
  easily from the chain rule.
\end{proof}

Another immediate consequence of Corollary \ref{ssec:Bcomp} is the following.

\begin{proposition*}
  Let $M=(M_k)$ be a log-convex weight sequence. 
  If $g : \R^n \to \R^n$ is a $C^\infty$-diffeomorphism, %satisfying $\inf_{x\in \R^n} |\det dg(x)|>0$, 
  we have the following implications: 
  \begin{align*}
    f \in  \cD(\R^n,\R^n),~ g \in \cB_{\ge1}(\R^n,\R^n) &\Longrightarrow  f \o g \in \cD(\R^n,\R^n), \\
    f \in  \DM(\R^n,\R^n),~ g \in \BM_{\ge1}(\R^n,\R^n) &\Longrightarrow  f \o g \in \DM(\R^n,\R^n). 
  \end{align*}
\end{proposition*}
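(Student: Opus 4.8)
The plan is to split each implication into a topological part---preservation of compact support, which uses only that $g$ is a diffeomorphism---and a regularity part, which is exactly the stability under composition proved in Corollary~\ref{ssec:Bcomp}.

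First I would dispose of the support. Put $K := \on{supp} f$, which is compact in both cases. Since $g$ is a homeomorphism, $\on{supp}(f \o g) = g^{-1}(K)$, and this set is compact because $g^{-1}$ is continuous. This already settles the $\cD$-case: $f \o g$ is a composite of $C^\infty$-maps, hence smooth, and has compact support, so $f \o g \in \cD(\R^n,\R^n)$ by the very definition of $\cD$ as $\varinjlim_K C^\oo_K(\R^n,\R^n)$. Note that the hypothesis $g \in \cB_{\ge1}(\R^n,\R^n)$ plays no essential role here beyond ensuring smoothness.

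For the $\DM$-case I would first observe that a compactly supported $\CM$-function automatically satisfies global $\BM$-estimates (its derivatives vanish off the support, and on the support the $\CM$-seminorms are finite); thus $f \in \DM(\R^n,\R^n)$ gives $f \in \BM(\R^n,\R^n) \subseteq \BM_{\ge1}(\R^n,\R^n)$. Together with the hypothesis $g \in \BM_{\ge1}(\R^n,\R^n)$, Corollary~\ref{ssec:Bcomp} (applied with $E=F=G=\R^n$ and $U=V=\R^n$, which is convex) yields $f \o g \in \BM_{\ge1}(\R^n,\R^n)$. To upgrade this to $f \o g \in \BM(\R^n,\R^n)$ it remains only to bound the $0$-th order term, and this is immediate since the $k=0$ contribution to the relevant seminorm equals $\sup_{x \in \R^n} |f(g(x))| \le \sup_{y \in \R^n} |f(y)| < \infty$. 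Combining $f \o g \in \BM(\R^n,\R^n)$ with the compact support established above gives $f \o g \in \DM(\R^n,\R^n)$, using that $\DM$ is precisely the space of compactly supported elements of $\BM$.

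The only delicate point is the transition from the conclusion of Corollary~\ref{ssec:Bcomp}, which governs derivatives of order $\ge 1$ only, to full membership in $\BM$: one must separately check that the $0$-th derivative is globally bounded, which holds trivially because $f$ (and hence $f \o g$) is bounded. I expect no genuine obstacle, the diffeomorphism hypothesis being used only through the elementary fact that a homeomorphism maps compact sets to compact sets, and the argument being uniform in the Beurling and Roumieu interpretations of the symbol $[M]$.
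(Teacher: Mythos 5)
Your proof is correct and takes essentially the same route as the paper, which derives this Proposition as an immediate consequence of Corollary~\ref{ssec:Bcomp} (composition stability of $\cB_{\ge1}$ and $\BM_{\ge1}$, applicable since $\R^n$ is convex), with the compact-support condition handled separately through the fact that $g$ is a homeomorphism. Your extra details --- the inclusion $\DM(\R^n,\R^n)\subseteq \BM(\R^n,\R^n)\subseteq \BM_{\ge1}(\R^n,\R^n)$ and the trivial bound on the $0$-th derivative needed to pass from $\BM_{\ge1}$ back to $\BM$ --- simply make explicit what the paper leaves implicit.
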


%-----------------------------------------------------------------------------------------------------------------------
\section{Application: Groups of diffeomorphisms on \texorpdfstring{$\R^n$}{Rn}} \label{sec:diff}
%-----------------------------------------------------------------------------------------------------------------------

Let $\cA \in \{\cD, \cS, \Sp,\cB, \DM, \SLM,\SpM,\BM\}$ and set
\[
  \Diff\cA = \DiffA := \big\{F=\Id+f: f\in \cA(\R^n,\R^n), \inf_{x \in \R^n}\det(\mathbb I_n + df(x)) >0\big\}.
\]
It was shown in \cite{MichorMumford13} that 
the groups of diffeomorphisms ($1 \le p<q<\infty$)
\[
      \xymatrix{
        \Diff\cD \ar@{{ >}->}[r] & \Diff\cS \ar@{{ >}->}[r] & \Diff W^{\infty,p} \ar@{{ >}->}[r] & \Diff W^{\infty,q} \ar@{{ >}->}[r] & \Diff\cB 
      }
\]
%\[
%      \xymatrix@C=.001pc{
%        \DiffD \ar@{{ >}->}[dr] &  & \DiffSp \ar@{{ >}->}[dr] &  & \DiffB \\
%         & \DiffS \ar@{{ >}->}[ur] & & \Diff W^{\infty,q}(\R^n) \ar@{{ >}->}[ur] &
%      }
%  \]
are $C^\infty$-regular Lie groups. The arrows describe $C^\infty$ injective group homomorphisms.
Each group is a normal subgroup of the groups on its right.
In \cite{KrieglMichorRainer14a} we proved that, 
provided that the weight sequence $M=(M_k)$ is log-convex, has moderate growth, and in the Beurling $\CbM \supseteq C^\om$, 
and that $L=(L_k)$ satisfies $L_k \ge 1$ for all $k$, 
the groups of $\CM$-diffeomorphisms
\[
      \xymatrix{
        \Diff\DM \ar@{{ >}->}[r] & \Diff\SLM \ar@{{ >}->}[r] & \Diff\SpM \ar@{{ >}->}[r] & \Diff W^{[M],q} \ar@{{ >}->}[r] & \Diff\BM
      }
\]
%  \[
%      \xymatrix@C=.001pc{
%        \!\!\DiffDM\!\! \ar@{{ >}->}[dr] &  & \!\!\DiffSpM\!\! \ar@{{ >}->}[dr] &  & \!\!\DiffBM\!\! \\
%         & \!\!\DiffSLM\!\! \ar@{{ >}->}[ur] & & \!\!\Diff W^{[M],q}(\R^n)\!\! \ar@{{ >}->}[ur] &
%      }
%  \]  
  are $\CM$-regular Lie groups. The arrows describe $\CM$ injective group homomorphisms.
  Each group is a normal subgroup in the groups on its right.      

This was done by 
\begin{itemize}
	\item characterizing the $C^\infty$-curves in the space $\cA(\R^n,\R^n)$ for $\cA\in \{\cD, \cS, \Sp,\cB\}$, 
	and the $\CM$-plots in the space $\cA(\R^n,\R^n)$ for $\cA \in \{\DM, \SLM,\SpM,\BM\}$, respectively, and
	\item proving via this characterization that $C^\infty$-curves and $\CM$-plots, respectively, are preserved 
	by the group operations, that is composition and inversion. 
\end{itemize}
The first step is based on the $C^\infty$ and $\CM$ exponential law while the 
second step required a careful application of Fa\`a di Bruno's formula.

In this section we apply the exponential laws established in this paper to conclude in a simpler way that 
$\DiffD$, 
$\DiffS$, $\DiffSp$, $\DiffB$ are $C^\infty$ Lie groups as well as 
that 
$\Diff\DrM(\R^n)$,
$\Diff \SrLM(\R^n)$, $\Diff \SprM(\R^n)$, $\Diff \BrM(\R^n)$ are $\CrM$ Lie groups provided that 
$M=(M_k)$ is non-quasianalytic.

%\begin{lemma}[{\cite[p.~201]{Yamanaka89}}] \label{lem:Yam}
%  Let $M=(M_k)$ be log-convex and let $A,C,\rh>0$. For $N \in \N_{\ge 2}$ define
%  \[
%  \ps_N(t) := C A \sum_{j=2}^N \frac{\rh^{j-1} M_{j-1}}{j} t^j, \quad t \in \R.
%  \]
%  Then there exist $c_i$ so that the function $g_N(s) = \sum_{i=1}^\infty c_i s^i$ satisfies 
%  \begin{equation} \label{eq:g_N}
%    g_N(s)=  A s+ \ps_N(g_N(s)), \quad \text{ for small } s \in \R,
%  \end{equation}
%  and 
%  \begin{align*}
%    0 < i c_i &< 
%    A (4A(CA+1) \rh)^{i-1} M_{i-1}, \quad \text{ for } i =2,\ldots,N. 
%  \end{align*}
%\end{lemma}
%
%\medskip

%-----------------------------------------------------------------------------------------------------------------------
\subsection{$\DiffD$, $\DiffS$, $\DiffSp$, $\DiffB$ are $C^\infty$ Lie groups} \label{ssec:smooth}
%-----------------------------------------------------------------------------------------------------------------------

Let us recall a well-known lemma; for a proof see, e.g., \cite[8.4]{KrieglMichorRainer14a}.

\begin{lemma*} \label{lem:LA}
  If $A \in \on{GL(n)}$ then $\|A^{-1}\| \le |\det A|^{-1} \|A\|^{n-1}$.
\end{lemma*}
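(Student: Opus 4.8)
The plan is to prove the inequality via the singular value decomposition, which makes all three quantities involved completely explicit. Since $A \in \on{GL(n)}$, it admits a decomposition $A = U \Si V^*$ with $U, V$ orthogonal (or unitary in the complex case) and $\Si = \on{diag}(\si_1, \dots, \si_n)$, where the singular values satisfy $\si_1 \ge \si_2 \ge \cdots \ge \si_n > 0$; positivity holds precisely because $A$ is invertible. Under the Euclidean (spectral) operator norm $\|\cdot\|$ one then has the standard identities $\|A\| = \si_1$, $\|A^{-1}\| = \si_n^{-1}$, and $|\det A| = \si_1 \si_2 \cdots \si_n$, the last because $|\det U| = |\det V| = 1$.

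The key step is then to substitute these identities into the asserted inequality $\|A^{-1}\| \le |\det A|^{-1}\|A\|^{n-1}$ and clear denominators. Multiplying through by $\si_n\, \si_1 \si_2 \cdots \si_n > 0$ turns the claim into
\[
  \si_1 \si_2 \cdots \si_n \le \si_n\, \si_1^{n-1},
\]
and cancelling the positive factor $\si_n$ reduces it to
\[
  \si_1 \si_2 \cdots \si_{n-1} \le \si_1^{n-1}.
\]
This final inequality is immediate from the monotonicity $\si_i \le \si_1$ for every $i$, so the product of the first $n-1$ singular values is bounded by $\si_1^{n-1}$. This completes the argument.

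Because the proof collapses to an elementary monotonicity estimate on the singular values, I do not expect any genuine obstacle; the only point needing care is the normalization convention, namely that $\|\cdot\|$ denotes the Euclidean operator norm, for which the three displayed identities hold. An alternative route would be to start from the adjugate formula $A^{-1} = (\det A)^{-1}\,\adj{A}$ and bound the operator norm of the cofactor matrix $\adj{A}$ by $\|A\|^{n-1}$ directly; this works but is more computational, so I would prefer the singular value argument above.
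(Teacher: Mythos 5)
Your proof is correct: with the spectral norm, the identities $\|A\|=\si_1$, $\|A^{-1}\|=\si_n^{-1}$, $|\det A|=\si_1\cdots\si_n$ reduce the claim to $\si_1\cdots\si_{n-1}\le\si_1^{n-1}$, which is immediate. The paper itself offers no argument here, only the citation \cite[8.4]{KrieglMichorRainer14a}, and the proof given there is exactly this singular-value computation, so your approach coincides with the intended one.
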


\begin{proposition*} \label{prop:inv}
  Let $F \in \cB_{\ge 1}(\R^n,\R^n)$ be a diffeomorphism of $\R^n$ satisfying $\inf_{x \in \R^n}\det dF(x) >0$.
  Then $F^{-1} \in \cB_{\ge 1}(\R^n,\R^n)$ and $\inf_{x \in \R^n}\det dF^{-1}(x) >0$.
\end{proposition*}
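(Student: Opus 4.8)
The plan is to set $G := F^{-1}$, which is a $C^\infty$-diffeomorphism (hence smooth) by the inverse function theorem, since $\det dF(x) \ge c > 0$ for all $x$ makes $dF(x)$ invertible everywhere. Writing $C_1 := \|F\|^{(1)}_{\R^n} < \infty$ and $c := \inf_{x} \det dF(x) > 0$, I would prove that every derivative $G^{(k)}$ with $k \ge 1$ is globally bounded, by induction on $k$, using the preceding lemma on inverse matrices together with two nested applications of Fa\`a di Bruno's formula.

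For the base case $k=1$ and the determinant claim, the chain rule gives $dG(y) = \big(dF(G(y))\big)^{-1}$. By the lemma above, $\|dG(y)\| = \big\|(dF(G(y)))^{-1}\big\| \le c^{-1}\,\|dF(G(y))\|^{\,n-1} \le c^{-1} C_1^{\,n-1} =: D_1$, which is finite and uniform in $y$; so $\|dG\|$ is globally bounded. Moreover $\|dF(x)\| \le C_1$ forces $\det dF(x) \le \|dF(x)\|^n \le C_1^{\,n}$, so $\det dF(G(y)) \in [c, C_1^{\,n}]$ and hence $\det dG(y) = 1/\det dF(G(y)) \in [C_1^{-n}, c^{-1}]$; in particular $\inf_y \det dG(y) \ge C_1^{-n} > 0$, proving the determinant assertion.

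For the inductive step I would differentiate the identity $dG = J \circ u$, where $u := dF \circ G : \R^n \to \Lin(\R^n;\R^n)$ and $J(A) := A^{-1}$ is matrix inversion on $\on{GL}(n)$. Since $G^{(k)} = d^{\,k-1}(J \circ u)$, Fa\`a di Bruno expresses $G^{(k)}(y)$ as a finite sum of terms $J^{(j)}(u(y))\big(d^{\be_1}u(y),\dots,d^{\be_j}u(y)\big)$ with $|\be| = k-1$ and $\be_i \ge 1$, and a second application of Fa\`a di Bruno expands each $d^{\be_i}u(y)$ as a sum of terms $F^{(r+1)}(G(y))\big(G^{(\ga_1)}(y),\dots,G^{(\ga_r)}(y)\big)$ with $|\ga| = \be_i \le k-1$. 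I would then bound each factor uniformly in $y$: the derivatives $J^{(j)}(A)$ are sums of (at most $j!$) products of $j+1$ copies of $A^{-1}$ interlaced with the arguments, so $\|J^{(j)}(u(y))\| \le j!\,\|(dF(G(y)))^{-1}\|^{\,j+1} = j!\,\|dG(y)\|^{\,j+1} \le j!\,D_1^{\,j+1}$ by the base case; the factors $\|F^{(r+1)}(G(y))\| \le \|F\|^{(r+1)}_{\R^n} < \infty$ because $r+1 \ge 1$ and $F \in \cB_{\ge1}$; and the factors $\|G^{(\ga_s)}(y)\|$ with $\ga_s \le k-1$ are bounded by the induction hypothesis. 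As $G^{(k)}(y)$ is a fixed finite sum of products of such uniformly bounded quantities, $\sup_y \|G^{(k)}(y)\| < \infty$, completing the induction and showing $F^{-1} \in \cB_{\ge1}(\R^n,\R^n)$.

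The only slightly delicate point is the combinatorial bookkeeping of the two nested Fa\`a di Bruno expansions, but since no sharp constants are needed—mere finiteness of each supremum suffices—this is routine. The genuinely essential structural observation is twofold: first, only derivatives of $F$ of order $\ge 1$ ever enter, so that boundedness of $F$ itself (which is \emph{not} available for $\cB_{\ge1}$) is never required; second, the single global bound $\|dG\| \le D_1$ supplied by the inverse-matrix lemma simultaneously controls \emph{all} derivatives of the inversion map $J$ evaluated along $dF\circ G$, which is exactly what lets the induction close.
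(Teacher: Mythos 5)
Your proof is correct, and it closes the induction on exactly the same three ingredients as the paper: the inverse-matrix lemma giving a global bound on $dG$, global boundedness of $F^{(j)}$ for $j\ge 1$, and the inductive bound on lower-order derivatives of $G$. But the inductive step itself takes a genuinely different route. The paper never differentiates the matrix-inversion map: it fixes $a\in\R^n$, sets $T=F'(a)^{-1}$ and $\ph:=\Id-T\o F$, so that $G=T+\ph\o G$, and applies Fa\`a di Bruno \emph{once} to $\ph\o G$. The $j=1$ term of that expansion is $\bigl(\Id-T\o F'(G(x))\bigr)\o G^{(k)}(x)$, so after moving it to the left one solves for $G^{(k)}(x)$ by inverting $T\o F'(G(x))$ (bounded, since $dG$ is), while the $j\ge2$ terms involve only $\ph^{(j)}=-T\o F^{(j)}$ (bounded, as $j\ge2$) and $G^{(\al_i)}$ with $\al_i\le k-1$ (induction). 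You instead differentiate the explicit identity $dG=J\o dF\o G$ with $J(A)=A^{-1}$, which costs you two nested Fa\`a di Bruno expansions plus the formula $J^{(j)}(A)(H_1,\dots,H_j)=(-1)^j\sum_{\si\in S_j}A^{-1}H_{\si(1)}A^{-1}\cdots H_{\si(j)}A^{-1}$ and its consequence $\|J^{(j)}(A)\|\le j!\,\|A^{-1}\|^{j+1}$. What the paper's fixed-point trick buys is the avoidance of any calculus on $\on{GL}(n)$ and a single, shorter combinatorial expansion; what your version buys is that it is mechanical and self-contained — no auxiliary map $\ph$ and no rearrangement step — at the price of slightly heavier bookkeeping, which, as you note, is harmless since only finiteness of each supremum is needed.
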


\begin{proof}
  Set $G := F^{-1}$. Since $F \o G = \Id$ we have 
  \[
    \det dG(x) = (\det dF(G(x)))^{-1},
  \]
  and so $\|G\|^{(1)}_{\R^n} < \infty$ and $\inf_{x \in \R^n}\det dG(x) >0$, in view of Lemma \ref{lem:LA}.
  Fix $a \in \R^n$ and set $b=F(a)$ and $T=F'(a)^{-1} = G'(b)$. Defining 
  \begin{equation} \label{eq:ph}
    \ph := \Id - T \o F,  
  \end{equation}
  we have 
  \begin{equation} \label{eq:G}
    G = T + \ph \o G.
  \end{equation}
  By Fa\`a di Bruno's formula, for $k\ge2$
  \begin{align*}
    \frac{G^{(k)}(x)}{k!} &= \frac{\ph^{(1)}(G(x)) \o G^{(k)}(x)}{k!} \\
    & \quad+ \on{sym} \sum_{j\ge 2} \sum_{\substack{\al_i>0\\ \al_1+\cdots+\al_j=k}} 
    \frac{\ph^{(j)}(G(x))}{j!} \o \Big(\frac{G^{(\al_1)}(x)}{\al_1!} \times \cdots \times \frac{G^{(\al_j)}(x)}{\al_j!} \Big) \\
    &= \frac{G^{(k)}(x) - T \o F'(G(x))\o G^{(k)}(x)}{k!} \\
    & \quad+ \on{sym} \sum_{j\ge 2} \sum_{\substack{\al_i>0\\ \al_1+\cdots+\al_j=k}} 
    \frac{\ph^{(j)}(G(x))}{j!} \o \Big(\frac{G^{(\al_1)}(x)}{\al_1!} \times \cdots \times \frac{G^{(\al_j)}(x)}{\al_j!} \Big) 
  \end{align*}
  and hence we can conclude by induction that $\|G\|^{(k)}_{\R^n}< \infty$ for all $k\ge 1$, since
  $(F')^{-1}$ is globally bounded by assumption. 
\end{proof}

Let $\cA \in \{\cD, \cS, \Sp, \cB\}$. 
The elements of $\DiffA$ are smooth diffeomorphisms on $\R^n$, since they are surjective proper submersions (cf.~\cite{MichorMumford13}).
The composite of two elements in $\DiffA$ is in $\DiffA$, by Propositions \ref{ssec:Bcomp}, \ref{ssec:Scomp}, \ref{ssec:Wcomp}, \ref{ssec:Dcomp}, and
\ref{prop:incl2}.

If $F = \Id+f\in \DiffB$, then $G = \Id + g := F^{-1} \in \cB_{\ge1}(\R^n,\R^n)$ and $\inf_{x \in \R^n}\det dG(x) >0$, 
by Proposition \ref{prop:inv}. This implies that $g \in \cB_{\ge1}(\R^n,\R^n)$ and clearly also $g$ itself is globally bounded, 
as follows for instance from
\begin{equation} \label{eq:group2}
    (\Id+g)\o(\Id+f)=\Id  \quad\Longleftrightarrow\quad f(x)+g(x+f(x))=0.
\end{equation}
So $F^{-1} \in \DiffB$, and $\DiffB$ forms a group.

If $F = \Id+f\in \DiffA$ for $\cA\in \{\cD, \cS, \Sp\}$ then $F \in \DiffB$, and hence $g = F^{-1} - \Id \in \cB(\R^n,\R^n)$, by the 
previous paragraph. We then conclude that $g \in \cA(\R^n,\R^n)$, by applying Propositions \ref{ssec:Scomp}, \ref{ssec:Wcomp}, and \ref{ssec:Dcomp} 
to
\begin{equation}
  (\Id+f)\o(\Id+g)= \Id \quad\Longleftrightarrow\quad  g(x)+f(x+g(x))=0.
\end{equation}
So $\DiffA$ forms a group.

We shall now show that the group operations are $C^\infty$, or equivalently, that they preserve $C^\infty$-curves. 
Actually, it suffices that they take $C^\infty$-curves with compact support to $C^\infty$-curves; in the special curve lemma 
\cite[2.8]{KM97} the curve may be chosen with compact support. We will take 
advantage of this fact here.
By the $\cA$ exponential law, Theorems \ref{thm:Bexp}, \ref{thm:Dexp}, and \ref{thm:Wexp}, and by Proposition \ref{prop:incl2}, we have the diagram 
\begin{align} \label{diag:A} 
\begin{split}
  \xymatrix{
    \cD(\R,\cA(\R^n,\R^n)) \ar@{{ >}->}[r] \ar[dr] & \cA(\R,\cA(\R^n,\R^n)) \ar@{{ >}->}[r] & C^\infty(\R,\cA(\R^n,\R^n)) \\
    & \cA(\R \times \R^n,\R^n) \ar@{=}[u]_{\cong}  \ar[ur]&
  }
\end{split}
\end{align}

In order to check that composition on $\DiffA$ is $C^\infty$ 
let $t \mapsto \Id + f(t,~)$ and $t \mapsto \Id + g(t,~)$ be in $\cD(\R,\DiffA)$. 
Then $f,g \in \cA(\R \times \R^n,\R^n)$, by \eqref{diag:A}. Consider 
\begin{equation} \label{eq:comp}
  ((\Id+f(t,~))\o(\Id+g(t,~)))(x)=x+g(t,x)+f(t,x+g(t,x))
\end{equation}
and define the $\cB_{\ge1}$-mapping (by Proposition \ref{prop:incl2})
\begin{align} \label{eq:ps}
\begin{split}
  \ps : \R \times \R^n \to \R \times \R^n, 
  ~(t,x) \mapsto (t,x+g(t,x)).
\end{split}
\end{align}
Then $f \o \ps \in \cA(\R \times \R^n,\R^n)$ and so composition is $C^\infty$ in view of \eqref{diag:A} and \eqref{eq:comp}, 
by Propositions
\ref{ssec:Bcomp},
\ref{ssec:Scomp},  
\ref{ssec:Wcomp}, and
\ref{ssec:Dcomp}; $\ps - \Id_{\R \times \R^n}$ tends to $0$ at infinity in the case $\cA=\cS$.

In order to see that inversion is $C^\infty$ 
let $t \mapsto \Id + f(t,~)$ be in $\cD(\R,\DiffA)$, and let $g$ be given by 
$(\Id + f)^{-1} = \Id+g$.  
Then $f\in \cA(\R \times \R^n,\R^n)$, by \eqref{diag:A}. 
Consider 
\begin{equation} \label{eq:implicit}
 (\Id +f) \o (\Id + g) = \Id \quad\Longleftrightarrow\quad  g(t,x) + f(t,x+g(t,x)) = 0.
\end{equation}
and define the $\cB_{\ge1}$-mapping (by Proposition \ref{prop:incl2})
\begin{align} \label{eq:ph2}
\begin{split}
  \ph : \R \times \R^n \to \R \times \R^n,
  ~(t,x) \mapsto (t,x+f(t,x)).
\end{split}
\end{align}
Then $\ph \o \ps = \ps \o \ph = \Id_{\R \times \R^n}$, where 
$\ps$ is the mapping defined in \eqref{eq:ps}.
By assumption $\inf_{(t,x) \in \R \times \R^n}\det d\ph(t,x) >0$, and hence Proposition \ref{prop:inv} implies that 
$\ps$ is $B_{\ge 1}$ and satisfies $\inf_{(t,x) \in \R \times \R^n}\det d\ps(t,x) >0$. 
So we may conclude that $g$ is $\cB_{\ge1}$ and therefore $\cB$, in view of \eqref{eq:implicit}. 

If $t \mapsto \Id + f(t,~)$ is in $\cD(\R,\DiffS)$, there is a compact interval $[a,b]$ 
such that if $t \not \in [a,b]$, then $f(t,~) = 0$ and so $g(t,~)=0$, by \eqref{eq:implicit}. 
It follows that $\ps - \Id_{\R \times \R^n}$ tends to $0$ at infinity. 
So we may conclude that $g\in \cS(\R \times \R^n,\R^n)$ by applying Proposition \ref{ssec:Scomp} 
to $g = -f \o \ps$ (that is \eqref{eq:implicit}).

The cases $\cA \in \{\cD,\Sp\}$ are analogous.
It follows that inversion on $\DiffA$ is $C^\infty$.

%-----------------------------------------------------------------------------------------------------------------------
\subsection{$\Diff \DrM(\R^n)$, $\Diff \SrLM(\R^n)$, $\Diff W^{\rM,p}(\R^n)$, $\Diff \BrM(\R^n)$ are $\CrM$ Lie groups}
%-----------------------------------------------------------------------------------------------------------------------

The arguments of Subsection \ref{ssec:smooth} provide the proof of this statement, if $M=(M_k)$ is non-quasianalytic. 
In fact, in this case a mapping is $\CrM$ if and only if it preserves $\CrM$-curves, or equivalently,
if it maps $\DrM$-curves to $\CrM$-curves; in the curve lemma \cite[3.6]{KMRc} the $\CrM$-curve may be chosen 
with compact support. Furthermore we need to replace Proposition \ref{ssec:smooth} by the following proposition.

\begin{proposition*} 
	Let $M=(M_k)$ be a log-convex derivation closed weight sequence.		
  Let $F \in \BM_{\ge 1}(\R^n,\R^n)$ be a diffeomorphism of $\R^n$ satisfying $\inf_{x \in \R^n}\det dF(x) >0$.
  Then $F^{-1} \in \BM_{\ge 1}(\R^n,\R^n)$ and $\inf_{x \in \R^n}\det dF^{-1}(x) >0$.
\end{proposition*}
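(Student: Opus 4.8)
The plan is to mirror the proof of the smooth case, Proposition~\ref{prop:inv}, replacing the $\cB$-boundedness of the iterated derivatives by the Denjoy--Carleman estimates defining $\BM_{\ge1}$, and to feed in log-convexity in the form \thetag{\ref{ssec:ws}.\ref{eq:FdB}} and derivation-closedness \thetag{\ref{eq:dc1}} exactly where the composition results of Subsection~\ref{ssec:Bcomp} used them. Write $G:=F^{-1}$ and $F=\Id+f$. First I would settle the order-one assertion and the determinant condition precisely as in Proposition~\ref{prop:inv}: differentiating $F\o G=\Id$ gives $dG(x)=dF(G(x))^{-1}$ and $\det dG(x)=(\det dF(G(x)))^{-1}$, so Lemma~\ref{lem:LA} together with $\inf_x\det dF>0$ and the global boundedness of $df$ yields both $\inf_{x}\det dG(x)>0$ and $\sup_x\|dG(x)\|_{\Lin^1}<\infty$.

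Second, I would reuse the same recursion. Fix $a\in\R^n$, put $T:=dF(a)^{-1}$ and $\ph:=\Id-T\o F$, so that $G=T+\ph\o G$; here $\ph\in\BM_{\ge1}$ because $\ph^{(j)}=-T\o F^{(j)}$ for $j\ge2$ while $\ph'=\Id-T\o dF$ is globally bounded. Applying Fa\`a di Bruno's formula \eqref{eq:Faa} to $G=T+\ph\o G$ and isolating the $j=1$ term $\ph'(G(x))\o G^{(k)}(x)$, the identity $\Id-\ph'(G(x))=T\o dF(G(x))$ lets me cancel $T$ and rewrite, for $k\ge2$,
\[
  dF(G(x))\o G^{(k)}(x)
  = -\,k!\,\on{sym}\sum_{j\ge2}\;\sum_{\substack{\al\in\N_{>0}^j\\ \al_1+\dots+\al_j=k}}
  \frac{F^{(j)}(G(x))}{j!}\o\Big(\frac{G^{(\al_1)}(x)}{\al_1!}\x\cdots\x\frac{G^{(\al_j)}(x)}{\al_j!}\Big).
\]
The right-hand side involves only the derivatives $G^{(\al_i)}$ of order $\al_i\le k-1$, and composing on the left with the globally bounded operator $dG(x)=dF(G(x))^{-1}$ expresses $G^{(k)}(x)$ through strictly lower-order data of $G$ and the (globally estimated) derivatives $F^{(j)}=f^{(j)}$, $j\ge2$. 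This is the shape needed for an induction on $k$.

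The genuinely new work, and the main obstacle, is to promote this recursion from mere boundedness to the Denjoy--Carleman estimate, i.e.\ to produce $\rh>0$ with $\sup_{k\ge1,x}\|G^{(k)}(x)\|_{\Lin^k}/(\rh^k\,k!\,M_k)<\infty$ (for every $\rh$ in the Beurling case $\BbM$, for some $\rh$ in the Roumieu case $\BrM$). Because the recursion for $G^{(k)}$ is \emph{implicit}, I cannot read the estimate off a single Fa\`a di Bruno evaluation as in Subsection~\ref{ssec:Bcomp}; instead I would run an induction, assuming a bound $\|G^{(\al)}(x)\|_{\Lin^\al}\le C_G\,\rh_G^{\al}\,\al!\,M_\al$ for all orders below $k$, substitute it into the display, absorb the factor $M_{\al_1}\cdots M_{\al_j}/M_k$ via \thetag{\ref{ssec:ws}.\ref{eq:FdB}} exactly as in \thetag{\ref{ssec:Bcomp}.\ref{eq:Faacompute}}, and sum the resulting geometric series in a parameter that is small once $\rh_G$ is chosen suitably, checking that the bound propagates to order $k$ with constants independent of $k$. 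Closing this induction with a single radius is the crux, and it is here that derivation-closedness is needed beyond the mere log-convexity sufficing for composition: the reciprocal factor $dF(G(x))^{-1}$ has entries equal to $\det dF(G(x))^{-1}$ times polynomials in the entries of $dF(G(x))$, and keeping such a reciprocal inside $\BM$ (so that left-composition with $dG$ preserves the estimate) is precisely where \thetag{\ref{eq:dc1}} enters; equivalently, one may invoke the composition theorem of Subsection~\ref{ssec:Bcomp} applied to the real-analytic matrix-inversion map, which is $\BM_{\ge1}$ on the region $\{\det\ge c>0\}$. Finally, the Roumieu/Beurling bookkeeping is routine and handled as in the composition proofs: in the Beurling case the induction is run for every $\rh$, while in the Roumieu case one fixes the single radius furnished by $F\in\BrM_{\ge1}$ and by the base step, phrasing the conclusion through the projective description of Subsection~\ref{sec:proj} if convenient.
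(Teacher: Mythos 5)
Your skeleton is the right one and faithfully mirrors the paper's smooth case: the identity $dG(x)=dF(G(x))^{-1}$ plus Lemma \ref{lem:LA} for the first-order and determinant claims, and the recursion $G=T+\ph\o G$ with Fa\`a di Bruno, isolation of the $j=1$ term, and left-composition with the uniformly bounded operator $dG(x)$. The gap is exactly at the step you call the crux: the induction does not close as you describe it. Inserting the hypothesis $\|G^{(\al)}(x)\|\le C_G\,\rh_G^{\al}\,\al!\,M_\al$ (for $\al<k$) and $\|F^{(j)}\|\le C_F\,\rh_F^{j}\,j!\,M_j$ into your display, and absorbing $M_j M_{\al_1}\cdots M_{\al_j}\le M_1^jM_k$ via \thetag{\ref{ssec:ws}.\ref{eq:FdB}} ``exactly as in (\ref{ssec:Bcomp}.\ref{eq:Faacompute})'', yields
\[
  \frac{\|G^{(k)}(x)\|}{k!\,M_k\,\rh_G^{k}}
  \;\le\; \sup_y\|dG(y)\|\; C_F \sum_{j=2}^{k}\binom{k-1}{j-1}\,(M_1\rh_F C_G)^{j},
\]
because there are $\binom{k-1}{j-1}$ compositions of $k$ into $j$ positive parts. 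This is \emph{not} a geometric series: it equals $x\bigl[(1+x)^{k-1}-1\bigr]$ with $x=M_1\rh_FC_G$, hence is at least $(k-1)x^2$ and grows without bound in $k$. So the requirement that it stay $\le C_G$ ``with constants independent of $k$'' fails for large $k$ no matter how $C_G$ and $\rh_G$ are chosen; note that $x$ does not involve $\rh_G$ at all, so ``choosing $\rh_G$ suitably'' cannot help. This is precisely the difference between composition and inversion: in (\ref{ssec:Bcomp}.\ref{eq:Faacompute}) the factor $(1+M_1\rh_fC_g)^{k-1}$ is harmless because both factors are \emph{given} and the exponential is absorbed once into the final radius — no induction is involved — whereas in the inversion recursion the hypothesis at radius $\rh_G$ only reproduces itself at the strictly larger radius $\rh_G(1+M_1\rh_FC_G)$, which destroys any uniform Denjoy--Carleman bound upon iteration. (The smooth Proposition \ref{prop:inv} is untouched by this because its induction is purely qualitative: finiteness at each separate order $k$.)

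Closing the gap requires a genuinely new device, for instance strengthening the inductive hypothesis by summable factors, $\|G^{(\al)}(x)\|\le C_G\rh_G^{\al}\al!\,M_\al/\al^{2}$, so that the convolution inequality $\sum_{\al_1+\dots+\al_j=k}\prod_i\al_i^{-2}\le A^{j-1}k^{-2}$ replaces the binomial count and the $j$-sum becomes a true geometric series in $M_1\rh_FC_GA$; this (or a majorant argument) is what the proofs in \cite{Komatsu79}, \cite{Yamanaka89}, \cite{Koike96} amount to, and even then the Beurling case needs a separate reduction, since the base step forces $C_G\rh_G M_1\ge\sup_x\|dG(x)\|$, which obstructs running the induction ``for every $\rh$'' as you propose. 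Two further points. First, your appeal to derivation closedness is misplaced and circular: invoking the composition theorem for the matrix-inversion map applied to $dF\o G$ presupposes that $dF\o G$ lies in $\BM$, i.e.\ essentially that $G\in\BM$, which is what is to be proved; in the recursion itself one only ever needs $\sup_x\|dG(x)\|<\infty$, which the smooth case already provides. Second, for calibration: the paper does not prove this proposition internally either — it gets the determinant statement from the smooth case and cites \cite{Komatsu79}, \cite{Yamanaka89}, \cite{Koike96} for $F^{-1}\in\BM_{\ge1}$ — so a correct self-contained argument would have been a genuine addition, but as written your decisive step fails.
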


That $\inf_{x \in \R^n}\det dF^{-1}(x) >0$ was shown in the proof of Proposition \ref{ssec:smooth}. That 
$F^{-1} \in \BM_{\ge 1}(\R^n,\R^n)$ follows e.g.\ from \cite{Komatsu79}, \cite{Yamanaka89}, \cite{Koike96}, 
see also \cite{KrieglMichorRainer14a} and \cite{RainerSchindl14}.

\def\cprime{$'$}
\providecommand{\bysame}{\leavevmode\hbox to3em{\hrulefill}\thinspace}
\providecommand{\MR}{\relax\ifhmode\unskip\space\fi MR }
% \MRhref is called by the amsart/book/proc definition of \MR.
\providecommand{\MRhref}[2]{%
  \href{http://www.ams.org/mathscinet-getitem?mr=#1}{#2}
}
\providecommand{\href}[2]{#2}

%\bibliography{../../../references/biblio}

\begin{thebibliography}{10}

\bibitem{Chevet69}
S.~Chevet, \emph{Sur certains produits tensoriels topologiques d'espaces de
  {B}anach}, Z. Wahrscheinlichkeitstheorie und Verw. Gebiete \textbf{11}
  (1969), 120--138. \MR{0415347 (54 \#3436)}

\bibitem{FaadiBruno1855}
C.~F. Fa\`a~di Bruno, \emph{Note {s}ur {u}ne {n}ouvelle {f}ormule {d}u {c}alcul
  {d}iff\'erentielle}, Quart. J. Math. \textbf{1} (1855), 359--360.

\bibitem{GelfandShilov68}
I.~M. Gel{\cprime}fand and G.~E. Shilov, \emph{Generalized functions. {V}ol. 2.
  {S}paces of fundamental and generalized functions}, Translated from the
  Russian by Morris D. Friedman, Amiel Feinstein and Christian P. Peltzer,
  Academic Press, New York, 1968. \MR{0230128 (37 \#5693)}

\bibitem{Grothendieck55}
A. Grothendieck, \emph{Produits {t}ensoriels {t}opologiques {e}t {e}spaces {n}ucl\'eaires},
Memoirs Amer. Math. Soc. 16, 1955.

\bibitem{Koike96}
M.~Koike, \emph{Inverse mapping theorem in the ultradifferentiable class},
  Proc. Japan Acad. Ser. A Math. Sci. \textbf{72} (1996), no.~8, 171--172.
  \MR{1420610 (97k:58022)}

\bibitem{Komatsu77}
H.~Komatsu, \emph{Ultradistributions. {II}. {T}he kernel theorem and
  ultradistributions with support in a submanifold}, J. Fac. Sci. Univ. Tokyo
  Sect. IA Math. \textbf{24} (1977), no.~3, 607--628. \MR{0477770 (57 \#17280)}

\bibitem{Komatsu79}
\bysame, \emph{The implicit function theorem for ultradifferentiable mappings},
  Proc. Japan Acad. Ser. A Math. Sci. \textbf{55} (1979), no.~3, 69--72.

\bibitem{KM97}
A.~Kriegl and P.~W. Michor, \emph{The convenient setting of global analysis},
  Mathematical Surveys and Monographs, vol.~53, American Mathematical Society,
  Providence, RI, 1997, \url{http://www.ams.org/online\_bks/surv53/}.

\bibitem{KMRc}
A.~Kriegl, P.~W. Michor, and A.~Rainer, \emph{The convenient setting for
  non-quasianalytic {D}enjoy--{C}arleman differentiable mappings}, J. Funct.
  Anal. \textbf{256} (2009), 3510--3544.

\bibitem{KMRq}
\bysame, \emph{The convenient setting for quasianalytic {D}enjoy--{C}arleman
  differentiable mappings}, J. Funct. Anal. \textbf{261} (2011), 1799--1834.

\bibitem{KMRu}
\bysame, \emph{The convenient setting for {D}enjoy--{C}arleman differentiable
  mappings of {B}eurling and {R}oumieu type}, Rev. Mat. Complut. \textbf{28}
  (2015), no.~3, 549--597. \MR{3379039}

\bibitem{KrieglMichorRainer14a}
\bysame, \emph{An exotic zoo of diffeomorphism groups on {$\mathbb{R}^n$}}, Ann.
  Global Anal. Geom. \textbf{47} (2015), no.~2, 179--222. \MR{3313140}

\bibitem{LiebLoss01}
E.~H. Lieb and M.~Loss, \emph{Analysis}, second ed., Graduate Studies in
  Mathematics, vol.~14, American Mathematical Society, Providence, RI, 2001.
  \MR{1817225 (2001i:00001)}

\bibitem{Michor80}
P.~W. Michor,
\newblock \emph{Manifolds {o}f {d}ifferentiable {m}appings},
\newblock Shiva Mathematics Series 3, Orpington, 1980.

\bibitem{Michor80II}
P.~W. Michor,
\newblock \emph{Manifolds {o}f {s}mooth {m}aps {I}{I}: {T}he {L}ie {g}roup {o}f
  {d}iffeomorphisms {o}f a {n}on {c}ompact {s}mooth {m}anifold},
\newblock Cahiers Topol. Geo. Diff., \textbf{21} (1980), 63--86.

\bibitem{MichorMumford13}
P.~W. Michor and D.~Mumford, \emph{A zoo of diffeomorphism groups on
  {$\mathbb{R}^n$}}, Ann. Global Anal. Geom. \textbf{44} (2013), no.~4,
  529--540. \MR{3132089}

\bibitem{RainerSchindl12}
A.~Rainer and G.~Schindl, \emph{Composition in ultradifferentiable classes},
  Studia Math. \textbf{224} (2014), no.~2, 97--131.

\bibitem{RainerSchindl14}
\bysame, \emph{Equivalence of stability properties for ultradifferentiable
  function classes}, Rev. R. Acad. Cienc. Exactas Fis. Nat. Ser. A Math.
  RACSAM. (2015).

\bibitem{Schwartz66}
L.~Schwartz, \emph{Th{\'e}orie des distributions}, Publications de l'Institut
  de Math{\'e}matique de l'Universit{\'e} de Strasbourg, No. IX-X. Nouvelle
  {\'e}dition, enti{\'e}rement corrig{\'e}e, refondue et augment{\'e}e,
  Hermann, Paris, 1966. \MR{0209834 (35 \#730)}

\bibitem{Thilliez08}
V.~Thilliez, \emph{On quasianalytic local rings}, Expo. Math. \textbf{26}
  (2008), no.~1, 1--23.

\bibitem{Treves67}
Fran{\c{c}}ois Tr{{\`e}}ves, \emph{Topological vector spaces, distributions and
  kernels}, Academic Press, New York, 1967. \MR{0225131 (37 \#726)}

\bibitem{Yamanaka89}
T.~Yamanaka, \emph{Inverse map theorem in the ultra-{$F$}-differentiable
  class}, Proc. Japan Acad. Ser. A Math. Sci. \textbf{65} (1989), no.~7,
  199--202.

\bibitem{Ziemer89}
W.~P. Ziemer, \emph{Weakly differentiable functions}, Graduate Texts in
  Mathematics, vol. 120, Springer-Verlag, New York, 1989, Sobolev spaces and
  functions of bounded variation. \MR{1014685 (91e:46046)}

\end{thebibliography}
%\bibliographystyle{amsplain}

\end{document}